\documentclass[a4paper,11pt]{article}
\usepackage[top=3.2cm, bottom=3.2cm, left=3.5cm, right=3.5cm]{geometry}
\usepackage[T1]{fontenc}
\usepackage[utf8]{inputenc}
\usepackage{amsmath,amsthm,amssymb,amsfonts,mathtools}
\usepackage{tikz}
\usepackage{xcolor}

\newtheorem{theorem}{Theorem}[section]
\newtheorem{lemma}[theorem]{Lemma}
\newtheorem{proposition}[theorem]{Proposition}
\newtheorem{corollary}[theorem]{Corollary}

\theoremstyle{definition}
\newtheorem{definition}[theorem]{Definition}
\newtheorem{remark}[theorem]{Remark}

\DeclareMathOperator{\comp}{\mathsf{comp}}
\DeclareMathOperator{\rcomp}{\mathsf{rcomp}}
\DeclareMathOperator{\rpath}{\mathsf{rpath}}
\DeclareMathOperator{\leaves}{\mathsf{leaves}}
\DeclareMathOperator{\internal}{\mathsf{internal}}
\DeclareMathOperator{\peak}{\mathsf{peak}}
\DeclareMathOperator{\nonpeak}{\mathsf{nonpeak}}
\DeclareMathOperator{\tr}{\mathsf{tr}}
\DeclareMathOperator{\nontr}{\mathsf{nontr}}
\DeclareMathOperator{\rmin}{\mathsf{rmin}}
\DeclareMathOperator{\nonrmin}{\mathsf{nonrmin}}
\DeclareMathOperator{\rmax}{\mathsf{rmax}}
\DeclareMathOperator{\redge}{\mathsf{redge}}
\DeclareMathOperator{\ledge}{\mathsf{ledge}}
\DeclareMathOperator{\cons}{\mathsf{cons}}
\DeclareMathOperator{\noncons}{\mathsf{noncons}}
\DeclareMathOperator{\blocks}{\mathsf{blocks}}
\DeclareMathOperator{\nonmin}{\mathsf{nonmin}}
\DeclareMathOperator{\subtrees}{\mathsf{sub}}
\DeclareMathOperator{\Av}{Av}
\DeclareMathOperator{\rev}{rev}
\DeclareMathOperator{\corev}{corev}
\DeclareMathOperator{\des}{\mathsf{des}}
\DeclareMathOperator{\asc}{\mathsf{asc}}
\DeclareMathOperator{\ddes}{\mathsf{ddes}}
\DeclareMathOperator{\dasc}{\mathsf{dasc}}
\DeclareMathOperator{\valley}{\mathsf{valley}}
\newcommand{\rc}{\operatorname{\mathsf{rc}}}
\newcommand{\bx}[1]{\boxed{\makebox[1.1em]{$#1$}}}
\newcommand{\ub}[1]{\vphantom{\bx{0}}\makebox[1.1em]{$#1$}}

\newcommand\oeis{{\sc oeis}}

\title{Involution $h$ on Catalan structures}

\author{%
Anders Claesson\thanks{Department of Mathematics,
  University of Iceland, Reykjavik, Iceland;
  \texttt{akc@hi.is}}
\and
Sergey Kitaev\thanks{Department of Mathematics and
  Statistics, University of Strathclyde,
  26 Richmond Street, Glasgow G1 1XH, United Kingdom;
  \texttt{sergey.kitaev@strath.ac.uk}}
\end{tabular}\\[0.5em]\begin{tabular}[t]{c}
Einar Steingr\'{\i}msson\thanks{Department of
  Mathematics and Statistics, University of Strathclyde,
  26 Richmond Street, Glasgow G1 1XH, United Kingdom;
  \texttt{einar.steingrimsson@strath.ac.uk}}
\and
Lintong Wang\thanks{Department of Mathematics,
  Zhejiang Normal University, Jinhua 321004, PR China;
  \texttt{WangLT@zjnu.edu.cn}}\quad\mbox{}
}

\begin{document}
\maketitle
\thispagestyle{empty}

\begin{abstract}
  We define an involution $h$ on Catalan structures through an abstract
  framework, prove an equidistribution theorem for four canonical statistics and
  present a generating function carrying these.  This framework encompasses all
combinatorial structures with a decomposition mirroring the first-return
decomposition of Dyck paths. The fixed points of~$h$ are counted by Catalan
numbers.  Canonical bijections transport the equidistribution to eight well
known concrete families, identifying the canonical statistics with native ones
on each.  In addition to its primary structure, each Catalan structure has a
derived \emph{secondary structure}, and $h$~interchanges primary and secondary
structure. The involution factors as $h = \rev \circ \corev \circ \rev$, where
$\rev$ and $\corev$ are two simpler involutions, and the composition
$M = h \circ \rev$ coincides with Donaghey's automorphism on plane trees. This
yields $M^{-1} = \rev \circ M \circ \rev$ and a period theorem: Iterating
the secondary structure construction produces a sequence that repeats with
period equal to the order of~$M$.  It is an open problem to describe $h$ and
the canonical statistics explicitly on most of the more than two hundred known
families of Catalan structures.
\end{abstract}

\section{Introduction}\label{sec:intro}

The Catalan numbers
$\frac{1}{n+1}\binom{2n}{n}$ form one of the
most ubiquitous sequences in combinatorics, counting,
so far, over two hundred families of objects~\cite{Stanley15}.
Many of these families carry intrinsic algebraic
structure (a notion of indecomposability and a way to
compose objects), and the standard bijections between them often
respect this structure.

In a previous paper~\cite{CKS09} the first three authors defined an involution
$h$ on $\beta(1,0)$-trees; the proof that $h$ is an involution appeared
in~\cite{CKS13}.  Fang~\cite{Fang18} subsequently showed that, under natural
bijections, $h$ corresponds to map duality on non-separable planar maps and to
interval duality on synchronized intervals in the Tamari lattice.  Here we place
the involution $h$ in an abstract setting in which a Catalan structure is
specified by a graded set~$C$, a distinguished empty element~$e$, an
indecomposability operator~$\lambda$, and an associative
composition~$\oplus$. The defining axiom is that every element decomposes
uniquely as $e$, $\lambda(s)$, or $u \oplus v$
(Definition~\ref{def:catalan-structure}). Starting from a primary structure
$(C, e, \lambda, \oplus)$ we derive a secondary structure
$(C,e,\gamma,\ominus)$, define the involution $h$ that interchanges the primary
and secondary structures, characterize its fixed points, and establish an
equidistribution theorem on four statistics defined on the abstract structure
$C$.

We then instantiate the framework on eight concrete Catalan families: rooted
plane trees, Dyck paths, triangulations of convex $n$-gons, permutations
respectively avoiding the patterns $231$ and $321$, binary trees, $2 \times n$
standard Young tableaux, and non-crossing partitions. For each family we
transport the equidistribution result, identifying the transported statistics
with natively defined ones. The resulting equidistribution theorem
(Theorem~\ref{thm:equidistribution}) shows that, on each family, four natively
defined statistics form two equidistributed pairs under~$h$. In the concrete
case of Dyck paths this theorem says that the joint distribution of the number
of returns to the $x$-axis and the number of peaks is the same as that for the
length of the final run of down-steps and one plus the number of double
up-steps.  On some families~$h$ admits a particularly simple description.
On triangulations of the $(n{+}2)$-gon, for instance, $h$ is vertex complement,
relabeling each vertex~$i$ as $n{+}3{-}i$.

Many, if not most, known combinatorial structures counted by the Catalan numbers
have a decomposition mirroring the well known first-return decomposition of Dyck
paths, which automatically makes them an instance of the abstract framework we
define here, so that the equidistribution results mentioned above hold for them.
What is not trivial, however, is how the involution $h$, and the
corresponding equidistributed statistics, manifest in each instance.  Given the
vast number of such structures it seems reasonable to expect many of them to
have intelligible descriptions of $h$ and the equidistributed statistics, which
would be interesting to explore further.  Moreover, it seems likely that $h$
will in many instances translate other statistics between Catalan structures in
intelligible ways.

The abstract framework also sheds light on Donaghey's
automorphism~\cite{Donaghey80} on plane trees, which turns out to be intimately
connected to~$h$.  Donaghey constructs a size-preserving bijection~$M$ on
plane trees (equivalently, on each Catalan family) as $L^{-1} \circ R$, where
$R$ and $L$ are two maps between general and binary bracketings.  Donaghey
himself notes ``the chaotic nature of~$M$''~\cite{Donaghey80}.
The equivalent map $T \circ R$ on ordered forests, where $R$ and $T$ are Knuth's
conjugate and transpose~\cite[Ex.~17, \S7.2.1.6]{Knuth06}, has equally
intractable cycle structure~\cite{Callan07}.  While the cycle structure remains
open (see Section~\ref{sec:open}), our framework gives the factorization
$M = h \circ \rev$, decomposing Donaghey's map into two simpler
involutions: $h$~interchanges primary and secondary structure, and
$\rev$ reverses the first-return decomposition.  This yields
$M^{-1} = \rev \circ M \circ \rev$.  Using a result of Pushkarev and
Byzov~\cite{PushkarevByzov14}, we show that $h$ and $\rev$ generate the free
product $\mathbb{Z}_2 * \mathbb{Z}_2$, so $M$ has unbounded order.  A period
theorem complements this: the iterated secondary structures
$(\lambda_1, \oplus_1), (\lambda_2, \oplus_2), \ldots$ repeat with period equal
to the order of~$M$ (see Section~\ref{sec:donaghey-section}).

The sections of the paper are as follows:
\begin{itemize}
\item Section~\ref{sec:framework}: The abstract Catalan framework, where $h$ and all
identities live.  The map~$h$ is defined in Definition~\ref{def:h-def}.
\item Section~\ref{sec:families}: Eight concrete Catalan families, their operations,
  and concrete descriptions of $h$ where available.
\item Section~\ref{sec:statistics}: Statistics, recursive equations, the
  equidistribution proof, generating functions, and per-family
  corollaries obtained by transporting the canonical statistics
  through canonical bijections, recorded in a dictionary identifying
  each canonical statistic with a native one.
\item Section~\ref{sec:donaghey-section}: The factorization
  $h = \rev \circ \corev \circ \rev$; the map
  $M = h \circ \rev$, which coincides with Donaghey's
  automorphism; the group generated by $h$ and $\rev$, which is the
  free product $\mathbb{Z}_2 * \mathbb{Z}_2$; and iterated secondary
  structures, whose period equals the order of~$M$.
\item Appendix~\ref{sec:bij-verify}: Verification that the seven concrete
  bijections of Section~\ref{sec:bijections} are canonical.
\item Appendix~\ref{sec:dict-verify}: Verification that each entry of the
  dictionary in Section~\ref{sec:dictionary} satisfies the canonical recursion.
\end{itemize}

\section{The abstract Catalan framework}\label{sec:framework}

\begin{definition}\label{def:catalan-structure}
A \emph{Catalan structure} consists of $(C,e,\lambda,\oplus)$,
where $C$ is the disjoint union $\sqcup_{n\geq 0}C_n$, with $C_0=\{e\}$ and
\begin{align*}
  \lambda & \colon C_n\to C_{n+1}\\
  \oplus & \colon C_m\times C_n\to C_{m+n}
\end{align*}
where $\oplus$ is associative with two-sided identity~$e$, and the following
unique decomposition property holds: every $t\in C$ can be written in exactly
one of three forms:
\begin{align*}
  t &= e && && \text{(empty)}\\
  t &= \lambda(s)
    &&\text{for some $s\in C$}
    && \text{(indecomposable)} \\
  t &= u\oplus v
    &&\text{for some indecomposable $u$ and $v\in C\setminus C_0$}
    && \text{(decomposable)}
\end{align*}
\end{definition}

We denote the size of an object $X$, usually understood from the context, by
$|X|$.  In particular, if $t\in C_n$ then $|t|=n$.

By Definition~\ref{def:catalan-structure}, every
nonempty element factors uniquely as
$\lambda(s_1) \oplus \cdots \oplus \lambda(s_k)$ with $k \ge 1$. Peeling from
either end gives a pair of two-case decompositions:
\begin{itemize}
\item First-return decomposition: every nonempty $t$ is
  uniquely $\lambda(u) \oplus v$.
\item Last-return decomposition: every nonempty $t$ is
  uniquely $u \oplus \lambda(v)$.
\end{itemize}

The \emph{free} Catalan structure is realized concretely by the canonical
expressions $e$, $\lambda(t)$, and $\lambda(u) \oplus v$ with $v \neq e$. By
Definition~\ref{def:catalan-structure} and the first-return decomposition, every
element corresponds to exactly one such expression, so the elements may be
identified with these expressions, built up from~$e$ by $\lambda$ and~$\oplus$.
In particular,
\begin{align*}
  C_0 &= \bigl\{\,e\,\bigr\}, \\
  C_1 &= \bigl\{\,\lambda(e)\,\bigr\}, \\
  C_2 &= \bigl\{\,\lambda(\lambda(e)),\; \lambda(e)\oplus\lambda(e)\,\bigr\}, \\
  C_3 &= \bigl\{\,\lambda(\lambda(\lambda(e))),\;
           \lambda(\lambda(e)\oplus\lambda(e)),\; \\
       &\qquad  \lambda(\lambda(e))\oplus\lambda(e),\;
         \lambda(e)\oplus\lambda(\lambda(e)),\;
           \lambda(e)\oplus\lambda(e)\oplus\lambda(e)\,\bigr\}.
\end{align*}
Note that removing the symbols $\lambda$, $e$, and $\oplus$ from the $C_i$ above
we obtain the sets of balanced parentheses well known to be counted by the
Catalan numbers.  The construction described here is the special case
$\lambda = \lambda_1$ of a more general construction on $\beta(1,0)$-trees
in~\cite{CKS09}.

A closely related axiomatization was developed by Brak~\cite{Brak19}, who
observes that each Catalan family carries a \emph{free magma structure}: a
non-associative binary product with unique factorization and an additive
norm. The unique isomorphism between free magmas then gives a ``universal
bijection'' between any two families.  The present framework differs in using an
associative composition $\oplus$ together with a separate unary
operator~$\lambda$, which gives the three-form decomposition directly and makes
the secondary structure~$(\gamma, \ominus)$ and the involution~$h$ natural to
define. Brak does not consider the secondary structure or the involution~$h$.

Each concrete family (defined in Section~\ref{sec:families}) is a Catalan structure,
and the universal property of the free structure ensures that any identity
proved there transfers to every instance. Before introducing the secondary
structure, we give two examples.

Let us view Dyck paths as words over the alphabet $\{u,d\}$.
Assuming $w$, $w_1$ and $w_2$ are Dyck paths, we define
\begin{align*}
  \lambda(w) &= uwd &&\text{(lift the path $w$)} \\
  w_1 \oplus w_2 &= w_1 w_2 &&\text{(concatenate $w_1$ and $w_2$)}.
\end{align*}
Secondly, we consider rooted plane trees, represented as lists of
subtrees hanging from the root. The single-node tree (a leaf)
is the empty list $[\,]$. Assuming $T$, $T_1$ and $T_2$ are
rooted plane trees, we define
\begin{align*}
  \lambda(T) &= [T]
    \quad\text{(new root with } T
    \text{ as sole child)} \\
  T_1 \oplus T_2 &= [c_1, \ldots, c_j,
    c_1', \ldots, c_k']
\end{align*}
where $T_1 = [c_1, \ldots, c_j]$ and
$T_2 = [c_1', \ldots, c_k']$.  In other words, $\oplus$ concatenates the child lists
of the two roots.

\begin{definition}\label{def:secondary}\label{sec:secondary}
Given a Catalan structure $(C,e,\lambda,\oplus)$ we derive
a \emph{secondary structure} $(C,e,\gamma,\ominus)$ on the
same set $C$. Define
\[
  \gamma(t) = t \oplus \lambda(e)
\]
and define $\ominus$ by recursion on the three-form
decomposition of the first argument:
\begin{align*}
       e \ominus v &= v \\
    \lambda(u) \ominus v &= \lambda(u \ominus v) \\
 (t \oplus u) \ominus v &= t \oplus (u \ominus v).
\end{align*}
A decomposable first argument may factor as $t \oplus u$ with $t, u \neq e$ in
more than one way, but $t \oplus (u \ominus v)$ is the same for every such
factorization, so $\ominus$ is well-defined and the third clause may be applied
to any of them. Alternatively, $t$ may be assumed to be indecomposable.
The recursion terminates by induction on the size of the first argument. Indeed, the
second and third clauses reduce to the recursive call $u \ominus v$, whose first
argument $u$ is strictly smaller than $\lambda(u)$ and than $t \oplus u$ (here
$t \neq e$), respectively. The first clause is a base case.
These definitions specialize the constructions of
\cite{CKS09} to the single-$\lambda$ setting.
\end{definition}

On plane trees, $\gamma$ appends a leaf as the
rightmost child of the root, and $t \ominus v$
replaces the leaf at the end of the rightmost-child
chain of~$t$ by~$v$ (Figure~\ref{fig:ominus-trees}).
On Dyck paths, $\gamma(w) = wud$ and
$w_1 \ominus w_2$ replaces the final $ud$ of~$w_1$
with $u w_2 d$.

\begin{figure}[ht]
\centering
\begin{tikzpicture}[
  nd/.style={circle, fill, inner sep=1.5pt},
  scale=0.8
]
% === t ===
\begin{scope}[shift={(-4.5, 0)}]
  \node[nd] (tr) at (0, 0) {};
  \node[nd] (tl) at (-0.7, -1) {};
  \node[nd] (tm) at (0.7, -1) {};
  \node[nd] (tb) at (0.7, -2) {};
  \draw (tr) -- (tl);
  \draw (tr) -- (tm);
  \draw (tm) -- (tb);
  \draw (tb) circle (4.4pt);
  \node[font=\small] at (0, -2.7) {$t$};
\end{scope}
% === Skew sum ===
\node at (-2.75, -0.6) {$\ominus$};
% === v ===
\begin{scope}[shift={(-1.5, -0.5)}]
  \node[nd] (vr) at (0, 0) {};
  \node[nd] (vl) at (-0.5, -1) {};
  \node[nd] (vrt) at (0.5, -1) {};
  \draw (vr) -- (vl);
  \draw (vr) -- (vrt);
  \draw (vr) circle (4.4pt);
  \node[font=\small] at (0, -2.16) {$v$};
\end{scope}
% === Arrow ===
\node at (0.21, -0.6) {$=$};
% === t ominus v ===
\begin{scope}[shift={(2.1, 0)}]
  \node[nd] (ur) at (0, 0) {};
  \node[nd] (ul) at (-0.7, -1) {};
  \node[nd] (um) at (0.7, -1) {};
  \node[nd] (ub) at (0.7, -2) {};
  \node[nd] (ubl) at (0.2, -3) {};
  \node[nd] (ubr) at (1.2, -3) {};
  \draw (ur) -- (ul);
  \draw (ur) -- (um);
  \draw (um) -- (ub);
  \draw (ub) -- (ubl);
  \draw (ub) -- (ubr);
  \draw (ub) circle (4.4pt);
\end{scope}
\end{tikzpicture}
\caption{The secondary composition $\ominus$ on plane
trees. The rightmost-child chain of~$t$ ends at a leaf
(circled) and in $t \ominus v$ that leaf is replaced by
the root of~$v$ (also circled), and the rest of~$v$
hangs off below.}
\label{fig:ominus-trees}
\end{figure}

\begin{lemma}\label{lem:ominus-snoc}
For all $t, u, v \in C$,
\[
  (t \oplus \lambda(u)) \ominus v
    = t \oplus \lambda(u \ominus v).
\]
In particular, $\gamma(t) \ominus v = t \oplus \lambda(v)$.
\end{lemma}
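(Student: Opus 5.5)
This is almost immediate from the recursive definition of $\ominus$ in Definition~\ref{def:secondary}. The one subtlety is that the third clause is stated for a factorization $t\oplus u$ with both factors nonempty, so the case $t=e$ has to be handled separately. I split on whether $t$ is empty.

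If $t = e$, then $t \oplus \lambda(u) = \lambda(u)$ because $e$ is a two-sided identity. The second clause of the definition gives $\lambda(u)\ominus v = \lambda(u\ominus v)$, which equals $e\oplus\lambda(u\ominus v)$, as required.

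If $t \neq e$, then $t\oplus\lambda(u)$ is a factorization of the first argument into two nonempty pieces. The definition explicitly allows the third clause to be applied to any such factorization, and its well-definedness across different factorizations is asserted there and taken as given. Applying the third clause gives $(t\oplus\lambda(u))\ominus v = t\oplus(\lambda(u)\ominus v)$. Applying the second clause to the inner term then gives $t\oplus\lambda(u\ominus v)$.

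The "in particular" follows by taking $u=e$. Then $\gamma(t)=t\oplus\lambda(e)$ by definition, and the lemma yields $\gamma(t)\ominus v = t\oplus\lambda(e\ominus v)$. The first clause gives $e\ominus v = v$, so this equals $t\oplus\lambda(v)$.

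The only delicate point is the reliance on well-definedness of $\ominus$ for non-canonical factorizations. If one prefers to use only the version of the third clause with $t$ indecomposable, I would instead argue by induction on the number $k$ of indecomposable factors of $t$. Write $t=\lambda(s_1)\oplus t'$, where $t'$ has $k-1$ factors. By associativity, $t\oplus\lambda(u)=\lambda(s_1)\oplus(t'\oplus\lambda(u))$, and $t'\oplus\lambda(u)$ is nonempty. The indecomposable-first third clause then gives $\lambda(s_1)\oplus\bigl((t'\oplus\lambda(u))\ominus v\bigr)$. The induction hypothesis rewrites the inner term as $t'\oplus\lambda(u\ominus v)$, and associativity finishes the step. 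The base case $k=0$ is the $t=e$ case handled above.
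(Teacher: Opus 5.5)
Your proof is correct and is essentially the paper's: apply the third clause of $\ominus$, then the second, and set $u=e$ using $e\ominus v=v$. Your separate treatment of $t=e$, and the optional induction that avoids relying on well-definedness for non-canonical factorizations, make explicit details the paper leaves unstated.
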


\begin{proof}
Applying the third and then the second defining
clause of~$\ominus$ we get
\[
(t \oplus \lambda(u)) \ominus v
 = t \oplus (\lambda(u) \ominus v)
 = t \oplus \lambda(u \ominus v).
\]
Setting $u = e$ and using
$\gamma(t) = t \oplus \lambda(e)$ gives the particular case.
\end{proof}

\begin{proposition}\label{prop:secondary-catalan}
The derived quadruple $(C,e,\gamma,\ominus)$ is itself a
Catalan structure.
\end{proposition}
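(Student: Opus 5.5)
We have to check four things for $(C,e,\gamma,\ominus)$: grading, identity, associativity, and unique three-form decomposition.

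\emph{Grading and identity.} Since $\gamma(t)=t\oplus\lambda(e)$, we get $|\gamma(t)|=|t|+1$. For $\ominus$, an induction along its defining recursion shows $|u\ominus v|=|u|+|v|$. The left identity $e\ominus v=v$ holds by definition. For the right identity, $u\ominus e=u$, we induct on $u$ using the three clauses: $\lambda(u)\ominus e=\lambda(u\ominus e)=\lambda(u)$, and $(t\oplus u)\ominus e=t\oplus u$.

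\emph{Associativity.} We prove $(a\ominus b)\ominus c=a\ominus(b\ominus c)$ by induction on $|a|$. If $a=e$, both sides equal $b\ominus c$. If $a=\lambda(u)$, the left side is $\lambda(u\ominus b)\ominus c=\lambda((u\ominus b)\ominus c)$, and induction turns this into $\lambda(u\ominus(b\ominus c))=a\ominus(b\ominus c)$. If $a=t\oplus u$ with $t$ indecomposable and $u\neq e$, the left side is $(t\oplus(u\ominus b))\ominus c$. The first factor $t$ is nonempty, so the third clause applies to the decomposition $t\oplus(u\ominus b)$ whenever $u\ominus b\ne e$, which holds because $|u\ominus b|\ge|u|>0$. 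This gives $t\oplus((u\ominus b)\ominus c)$, and induction finishes. The argument uses the paper's remark that the third clause applies to any factorization with both parts nonempty.

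\emph{Unique decomposition.} This is the main point. By the last-return decomposition, every nonempty $t$ is uniquely $t=w\oplus\lambda(s)$. We show that the secondary indecomposables, namely the elements $\gamma(w)=w\oplus\lambda(e)$, are exactly the $t$ with $s=e$. Then for $t$ with $s\neq e$ we produce the decomposition
\[
t=\gamma(w)\ominus s
\]
by Lemma~\ref{lem:ominus-snoc}, and we need $\gamma(w)$ to be the secondary-indecomposable factor with $s$ nonempty. So the three forms are:
\begin{itemize}
\item $e$;
\item $\gamma(w)$, i.e. last-return part equal to $\lambda(e)$;
\item $\gamma(w)\ominus s$ with $s\neq e$, i.e. last-return part $\lambda(s)$ with $s\ne e$.
\end{itemize}
These cases are disjoint and exhaustive by uniqueness of the last-return decomposition, and $w$ and $s$ are recovered from $t$ uniquely. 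What remains is to check that any expression $\gamma(w)\ominus s$ with $s\ne e$ really has last-return part $\lambda(s)$. Lemma~\ref{lem:ominus-snoc} gives $\gamma(w)\ominus s=w\oplus\lambda(s)$ directly, so uniqueness holds: if $\gamma(w)\ominus s=\gamma(w')\ominus s'$, then $w\oplus\lambda(s)=w'\oplus\lambda(s')$, hence $w=w'$ and $s=s'$.

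The subtle step is the interplay with the definition's requirement that the indecomposable factor be unique. In the secondary structure, a decomposable $t=x\ominus y$ needs $x$ to be $\gamma$-indecomposable. We get this by writing $x=\gamma(w)$ and applying Lemma~\ref{lem:ominus-snoc}. Finally, $\gamma$ is injective by uniqueness of the last-return decomposition.
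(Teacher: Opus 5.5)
Your proposal is correct and follows the paper's proof essentially step for step: you verify grading and the identities by induction along the recursion, associativity by induction on the first argument, and unique decomposition by combining the last-return decomposition $t = w\oplus\lambda(s)$ with Lemma~\ref{lem:ominus-snoc}. You also check explicitly that $u\ominus b\neq e$ before reapplying the third clause, a point the paper leaves implicit.
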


\begin{proof}
We verify the axioms of
Definition~\ref{def:catalan-structure}.
The grading is preserved: $\gamma\colon C_n \to C_{n+1}$
since $\gamma(t) = t \oplus \lambda(e)$ adds one to the
size, and $\ominus\colon C_m \times C_n \to C_{m+n}$
since each clause of the recursion preserves the total
size.

\emph{Identity.} $e \ominus v = v$ is the first clause
of the definition. The right identity,
$t \ominus e = t$ follows by induction on the
three-form decomposition of~$t$: if $t = e$ it is
immediate; if $t = \lambda(u)$ then
$\lambda(u) \ominus e = \lambda(u \ominus e)
= \lambda(u)$, and if $t = s \oplus u$ then
$(s \oplus u) \ominus e = s \oplus (u \ominus e)
= s \oplus u$.

\emph{Associativity.}
$(t \ominus u) \ominus v = t \ominus (u \ominus v)$
follows by induction on the three-form decomposition
of~$t$. If $t = e$ then both sides equal
$u \ominus v$. If $t = \lambda(s)$ then
\[
(\lambda(s) \ominus u) \ominus v
  = \lambda(s \ominus u) \ominus v
  = \lambda((s \ominus u) \ominus v)
  = \lambda(s \ominus (u \ominus v))
  = \lambda(s) \ominus (u \ominus v),
\]
using the inductive hypothesis on~$s$. If
$t = s \oplus w$ then
\begin{align*}
((s &\oplus w) \ominus u) \ominus v
  = (s \oplus (w \ominus u)) \ominus v
  = s \oplus ((w \ominus u) \ominus v)=\\
  s &\oplus (w \ominus (u \ominus v))
  = (s \oplus w) \ominus (u \ominus v),
\end{align*}
using the inductive hypothesis on~$w$.

\emph{Unique decomposition.} Every nonempty element
has a unique last-return decomposition
$t = u \oplus \lambda(v)$. By
Lemma~\ref{lem:ominus-snoc},
$\gamma(u) \ominus v = u \oplus \lambda(v)$, so
$t = \gamma(u) \ominus v$. If $v = e$ then
$t = \gamma(u)$. If $v \neq e$ then
$t = \gamma(u) \ominus v$ with $\gamma(u)$ and~$v$
both nonempty. Conversely, from
$t = \gamma(u) \ominus v$ we recover
$u \oplus \lambda(v)$, so the secondary decomposition
is unique.
\end{proof}

\begin{lemma}\label{lem:gamma-oplus}
$\gamma(u \oplus v) = u \oplus \gamma(v)$.
\end{lemma}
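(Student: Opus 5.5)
This identity is immediate from the definition $\gamma(t) = t \oplus \lambda(e)$ together with associativity of~$\oplus$, so the plan is a one-line unfolding. We compute
\[
  \gamma(u \oplus v) = (u \oplus v) \oplus \lambda(e)
  = u \oplus (v \oplus \lambda(e)) = u \oplus \gamma(v),
\]
where the first and last equalities are the definition of~$\gamma$ from Definition~\ref{def:secondary} and the middle one is associativity of~$\oplus$ from Definition~\ref{def:catalan-structure}.

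No induction or case analysis on the three-form decomposition is needed, and the identity holds for all $u, v \in C$, including the degenerate cases $u = e$ or $v = e$. In those cases it follows from the fact that $e$ is a two-sided identity for~$\oplus$, and the same chain of equalities covers them. There is no real obstacle here. The only thing to check is that nothing in the statement implicitly requires $u$ to be indecomposable or $v$ to be nonempty, and it does not, since $\gamma$ is defined on all of~$C$.
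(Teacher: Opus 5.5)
Your proof is correct and is exactly the paper's argument: unfold $\gamma(t) = t \oplus \lambda(e)$ and apply associativity of~$\oplus$ in the single chain $(u \oplus v) \oplus \lambda(e) = u \oplus (v \oplus \lambda(e))$. Your remark that the identity needs no hypothesis on $u$ or $v$ is also accurate.
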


\begin{proof}
$\gamma(u \oplus v) = (u \oplus v) \oplus \lambda(e)
  = u \oplus (v \oplus \lambda(e))
  = u \oplus \gamma(v)$.
\end{proof}

Conversely, $\lambda$ and $\oplus$ can be recovered from
$\gamma$ and $\ominus$:
\[
  \lambda(t) = \gamma(e) \ominus t
\]
and
\begin{align*}
  t \oplus e &= t \\
  t \oplus \gamma(u) &= \gamma(t \oplus u) \\
  t \oplus (u \ominus v) &= (t \oplus u) \ominus v \qquad (u \neq e).
\end{align*}
Thus, given a primary Catalan structure
$(C,e,\lambda,\oplus)$ we derive a secondary structure
$(C,e,\gamma,\ominus)$ as above (or vice versa).

We now define the maps that relate different Catalan
structures via the unique decomposition of each.
Let $(A,e_A,\lambda_A,\oplus_A)$ and
$(B,e_B,\lambda_B,\oplus_B)$ be two Catalan
structures. A \emph{canonical bijection}
$\varphi : A \to B$ is obtained by
\begin{align*}
  \varphi(e_A) &= e_B \\
  \varphi(\lambda_A(t)) &= \lambda_B(\varphi(t)) \\
  \varphi(u \oplus_A v)
    &= \varphi(u) \oplus_B \varphi(v).
\end{align*}

\begin{definition}\label{def:h}\label{def:h-def}
The \emph{map $h$} is defined by
\begin{align*}
  h(e) &= e \\
  h(\lambda(t)) &= \gamma(h(t)) \\
  h(u \oplus v) &= h(v) \ominus h(u).
\end{align*}
Equivalently, using the first-return decomposition:
\begin{align*}
  h(e) &= e \\
  h(\lambda(u) \oplus v)
    &= h(v) \ominus \gamma(h(u)).
\end{align*}
\end{definition}

\begin{figure}[ht]
\centering
\begin{tikzpicture}[
  nd/.style={circle, fill, inner sep=1.5pt},
  scale=0.78
]
% === COLUMN HEADERS ===
\node[font=\small] at (2.6, 2.2) {$t$};
\node[font=\small] at (9.0, 2.2) {$h(t)$};

% === ROW 1: TREES (y=0.8) ===
% Left: [[e,e], [e,e]]
\begin{scope}[shift={(1.6,1.1)}]
  \node[nd] at (1,0) (tr) {};
  \node[nd] at (0,-0.6) (ta) {};
  \node[nd] at (2,-0.6) (tb) {};
  \node[nd] at (-0.35,-1.2) {};
  \node[nd] at (0.35,-1.2) {};
  \node[nd] at (1.65,-1.2) {};
  \node[nd] at (2.35,-1.2) {};
  \draw (1,0)--(0,-0.6)--(-0.35,-1.2)
        (0,-0.6)--(0.35,-1.2)
        (1,0)--(2,-0.6)--(1.65,-1.2)
        (2,-0.6)--(2.35,-1.2);
\end{scope}
% Right: [[e], [[e],e]]
\begin{scope}[shift={(7.8,1.1)}]
  \draw (1,0)--(0,-0.6)--(0,-1.2)
        (1,0)--(2,-0.6)--(1.6,-1.2)--(1.6,-1.8)
        (2,-0.6)--(2.4,-1.2);
  \foreach \p in {(1,0),(0,-0.6),(0,-1.2),
    (2,-0.6),(1.6,-1.2),(1.6,-1.8),(2.4,-1.2)}
    \node[nd] at \p {};
\end{scope}
\node[font=\small, anchor=east] at (-1.1, 0.2)
  {Trees};

% === ROW 2: DYCK (y=-2.3) ===
% Left: uududduududd
% heights: 0 1 2 1 2 1 0 1 2 1 2 1 0
\begin{scope}[shift={(-0.1,-2.3)}]
  \draw[thick]
    (0,0)--(0.4,0.4)--(0.8,0.8)--(1.2,0.4)
    --(1.6,0.8)--(2.0,0.4)--(2.4,0)
    --(2.8,0.4)--(3.2,0.8)--(3.6,0.4)
    --(4.0,0.8)--(4.4,0.4)--(4.8,0);
  \draw[thin,gray] (0,0)--(4.8,0);
  \node[nd] at (0,0)     (tr) {};
  \node[nd] at (0.4,0.4) (tr) {};
  \node[nd] at (0.8,0.8) (tr) {};
  \node[nd] at (1.2,0.4) (tr) {};
  \node[nd] at (1.6,0.8) (tr) {};
  \node[nd] at (2.0,0.4) (tr) {};
  \node[nd] at (2.4,0)   (tr) {};
  \node[nd] at (2.8,0.4) (tr) {};
  \node[nd] at (3.2,0.8) (tr) {};
  \node[nd] at (3.6,0.4) (tr) {};
  \node[nd] at (4.0,0.8) (tr) {};
  \node[nd] at (4.4,0.4) (tr) {};
  \node[nd] at (4.8,0)   (tr) {};
\end{scope}
% Right: uudduuuddudd
% heights: 0 1 2 1 0 1 2 3 2 1 2 1 0
\begin{scope}[shift={(6.1,-2.3)}]
  \draw[thick]
    (0,0)--(0.4,0.4)--(0.8,0.8)--(1.2,0.4)
    --(1.6,0)--(2.0,0.4)--(2.4,0.8)
    --(2.8,1.2)--(3.2,0.8)--(3.6,0.4)
    --(4.0,0.8)--(4.4,0.4)--(4.8,0);
  \draw[thin,gray] (0,0)--(4.8,0);
  \node[nd] at (0,0)     (tr) {};
  \node[nd] at (0.4,0.4) (tr) {};
  \node[nd] at (0.8,0.8) (tr) {};
  \node[nd] at (1.2,0.4) (tr) {};
  \node[nd] at (1.6,0)   (tr) {};
  \node[nd] at (2.0,0.4) (tr) {};
  \node[nd] at (2.4,0.8) (tr) {};
  \node[nd] at (2.8,1.2) (tr) {};
  \node[nd] at (3.2,0.8) (tr) {};
  \node[nd] at (3.6,0.4) (tr) {};
  \node[nd] at (4.0,0.8) (tr) {};
  \node[nd] at (4.4,0.4) (tr) {};
  \node[nd] at (4.8,0)   (tr) {};
\end{scope}
\node[font=\small, anchor=east] at (-1.1, -2.0)
  {Dyck};

% === ROW 3: TRIANGULATIONS (y=-4.8) ===
% Left: octagon, diags (1,5)(2,4)(2,5)(5,7)(5,8)
\begin{scope}[shift={(2.6,-4.8)}]
  \foreach \i/\a in {1/202.5, 2/247.5, 3/292.5,
    4/337.5, 5/22.5, 6/67.5, 7/112.5, 8/157.5} {
    \coordinate (L\i) at (\a:1.3);
    \node[nd] at (L\i) {};
    \node[font=\tiny] at (\a:1.6) {\i};
  }
  \foreach \i/\j in {1/2,2/3,3/4,4/5,5/6,6/7,7/8,8/1}
    { \draw (L\i)--(L\j); }
  \foreach \i/\j in {1/5,2/4,2/5,5/7,5/8}
    { \draw (L\i)--(L\j); }
\end{scope}
% Right: octagon, diags (1,4)(2,4)(4,7)(4,8)(5,7)
\begin{scope}[shift={(9.0,-4.8)}]
  \foreach \i/\a in {1/202.5, 2/247.5, 3/292.5,
    4/337.5, 5/22.5, 6/67.5, 7/112.5, 8/157.5} {
    \coordinate (R\i) at (\a:1.3);
    \node[nd] at (R\i) {};
    \node[font=\tiny] at (\a:1.6) {\i};
  }
  \foreach \i/\j in {1/2,2/3,3/4,4/5,5/6,6/7,7/8,8/1}
    { \draw (R\i)--(R\j); }
  \foreach \i/\j in {1/4,2/4,4/7,4/8,5/7}
    { \draw (R\i)--(R\j); }
\end{scope}
\node[font=\small, anchor=east] at (-1.1, -4.8)
  {Tri};

% === ROW 4: AV(231) (y=-7.2) ===
\node[font=\small] at (2.6, -7.2)
  {$3\;1\;2\;6\;4\;5$};
\node[font=\small] at (9.0, -7.2)
  {$2\;1\;6\;4\;3\;5$};
\node[font=\small, anchor=east] at (-1.1, -7.2)
  {$\Av(231)$};

% === ROW 5: AV(321) (y=-8.2) ===
\node[font=\small] at (2.6, -8.2)
  {$3\;1\;2\;6\;4\;5$};
\node[font=\small] at (9.0, -8.2)
  {$2\;1\;4\;6\;3\;5$};
\node[font=\small, anchor=east] at (-1.1, -8.2)
  {$\Av(321)$};

% === ROW 6: BINARY TREES (y=-10.2) ===
% Left: ((None,(None,None)),((None,(None,None)),None))
%       root
%      /    \
%     A      C
%      \    /
%       B  D
%           \
%            E
\begin{scope}[shift={(1.6,-9.3)}]
  \node[nd] at (1,0) {};       % root
  \node[nd] at (0.2,-0.6) {};  % A
  \node[nd] at (0.6,-1.2) {};  % B
  \node[nd] at (1.8,-0.6) {};  % C
  \node[nd] at (1.4,-1.2) {};  % D
  \node[nd] at (1.8,-1.8) {};  % E
  \draw (1,0)--(0.2,-0.6) (0.2,-0.6)--(0.6,-1.2)
        (1,0)--(1.8,-0.6) (1.8,-0.6)--(1.4,-1.2)
        (1.4,-1.2)--(1.8,-1.8);
\end{scope}
% Right: ((None,None),(((None,None),(None,None)),None))
%       root
%      /    \
%     F      G
%           /
%          H
%         / \
%        I   J
\begin{scope}[shift={(7.8,-9.3)}]
  \node[nd] at (1,0) {};       % root
  \node[nd] at (0.4,-0.6) {};  % F
  \node[nd] at (1.6,-0.6) {};  % G
  \node[nd] at (1.2,-1.2) {};  % H
  \node[nd] at (0.8,-1.8) {};  % I
  \node[nd] at (1.6,-1.8) {};  % J
  \draw (1,0)--(0.4,-0.6)
        (1,0)--(1.6,-0.6) (1.6,-0.6)--(1.2,-1.2)
        (1.2,-1.2)--(0.8,-1.8)
        (1.2,-1.2)--(1.6,-1.8);
\end{scope}
\node[font=\small, anchor=east] at (-1.1, -10.2)
  {Binary};

% === ROW 7: SYT (y=-12.2) ===
% Left: ([1,2,4,7,8,10],[3,5,6,9,11,12])
\begin{scope}[shift={(0.6,-12.5)}]
  \foreach \i in {0,...,5} {
    \draw (\i*0.6, 0) rectangle (\i*0.6+0.6, 0.5);
    \draw (\i*0.6, -0.5) rectangle (\i*0.6+0.6, 0);
  }
  \foreach \i/\v in {0/1,1/2,2/4,3/7,4/8,5/{10}}
    \node[font=\tiny] at (\i*0.6+0.3, 0.25) {\v};
  \foreach \i/\v in {0/3,1/5,2/6,3/9,4/{11},5/{12}}
    \node[font=\tiny] at (\i*0.6+0.3, -0.25) {\v};
\end{scope}
% Right: ([1,2,5,6,7,10],[3,4,8,9,11,12])
\begin{scope}[shift={(7.0,-12.5)}]
  \foreach \i in {0,...,5} {
    \draw (\i*0.6, 0) rectangle (\i*0.6+0.6, 0.5);
    \draw (\i*0.6, -0.5) rectangle (\i*0.6+0.6, 0);
  }
  \foreach \i/\v in {0/1,1/2,2/5,3/6,4/7,5/{10}}
    \node[font=\tiny] at (\i*0.6+0.3, 0.25) {\v};
  \foreach \i/\v in {0/3,1/4,2/8,3/9,4/{11},5/{12}}
    \node[font=\tiny] at (\i*0.6+0.3, -0.25) {\v};
\end{scope}
\node[font=\small, anchor=east] at (-1.1, -12.2)
  {SYT};

% === ROW 8: NCP (y=-14.7) ===
% Left: {1,3},{2},{4,6},{5}
\begin{scope}[shift={(1.7,-14.6)}]
  \foreach \i in {1,...,6} {
    \coordinate (N\i) at (\i*0.75 - 1.925, 0);
    \node[nd] at (N\i) {};
    \node[font=\tiny, below=2pt] at (N\i) {\i};
  }
  \draw (N1) to[bend left=70] (N3);
  \draw (N4) to[bend left=70] (N6);
\end{scope}
% Right: {1,2},{3,4,6},{5}
\begin{scope}[shift={(8.1,-14.6)}]
  \foreach \i in {1,...,6} {
    \coordinate (M\i) at (\i*0.75 - 1.925, 0);
    \node[nd] at (M\i) {};
    \node[font=\tiny, below=2pt] at (M\i) {\i};
  }
  \draw (M1) to[bend left=85] (M2);
  \draw (M3) to[bend left=85] (M4);
  \draw (M4) to[bend left=70] (M6);
\end{scope}
\node[font=\small, anchor=east] at (-1.1, -14.7)
  {NCP};

\end{tikzpicture}
\caption{The involution $h$ on a size-$6$ Catalan
object, shown simultaneously across all eight families.
Corresponding objects are related by canonical
bijections.}
\label{fig:h-example}
\end{figure}

\begin{lemma}[Last-return form]\label{lem:h-last-return}
The map $h$ is also characterized by
\begin{align*}
  h(e) &= e \\
  h(u \oplus \lambda(v))
    &= h(v) \oplus \lambda(h(u))
\end{align*}
\end{lemma}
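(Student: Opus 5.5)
Two things need proving. First, $h$ satisfies the stated identity for all $u,v\in C$. Second, that identity, together with $h(e)=e$, determines $h$ uniquely. Uniqueness is immediate from the last-return decomposition: every nonempty $t$ is uniquely $u\oplus\lambda(v)$ with $|u|,|v|<|t|$. So a map satisfying the two clauses is determined by induction on size. All the work is in the identity $h(u\oplus\lambda(v)) = h(v)\oplus\lambda(h(u))$.

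I would split on whether $u$ is empty.

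\emph{Case $u=e$.} Here $u\oplus\lambda(v)=\lambda(v)$, so Definition~\ref{def:h-def} gives $h(\lambda(v))=\gamma(h(v))=h(v)\oplus\lambda(e)$. This is exactly $h(v)\oplus\lambda(h(e))$.

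\emph{Case $u\neq e$.} Now $u\oplus\lambda(v)$ is decomposable. I would use the clause $h(x\oplus y)=h(y)\ominus h(x)$ with $x=u$ and $y=\lambda(v)$. This needs the remark after Definition~\ref{def:secondary} that the decomposable clause may be applied to any nontrivial factorization. I would first argue briefly that this remark transfers to $h$, or avoid it as described below. This gives
\[
h(u\oplus\lambda(v)) = h(\lambda(v))\ominus h(u) = \gamma(h(v))\ominus h(u).
\]
Lemma~\ref{lem:ominus-snoc}, in its particular case $\gamma(t)\ominus w = t\oplus\lambda(w)$ with $t=h(v)$ and $w=h(u)$, turns the right-hand side into $h(v)\oplus\lambda(h(u))$, as required.

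The main obstacle is justifying that $h(x\oplus y)=h(y)\ominus h(x)$ holds for arbitrary nonempty $x,y$, not just for $x$ indecomposable as in the first-return form. I would prove this by induction on $|x|$. If $x$ is indecomposable, it is the definition. Otherwise write $x=\lambda(a)\oplus b$ with $b\neq e$. Then, using associativity of $\oplus$, the induction hypothesis on $b$, and associativity of $\ominus$ from Proposition~\ref{prop:secondary-catalan}:
\[
h(x\oplus y)=h(\lambda(a)\oplus(b\oplus y))=h(b\oplus y)\ominus h(\lambda(a)) = (h(y)\ominus h(b))\ominus h(\lambda(a)).
\]
The same reasoning applied to $x$ itself gives $h(x)=h(b)\ominus h(\lambda(a))$. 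Regrouping the last expression with associativity of $\ominus$ gives $h(y)\ominus h(x)$. With this multiplicativity in hand, the computation above finishes the proof.
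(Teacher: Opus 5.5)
Your proof is correct and takes the paper's route: apply $h(x\oplus y)=h(y)\ominus h(x)$ with $y=\lambda(v)$, rewrite $h(\lambda(v))=\gamma(h(v))$, and finish with Lemma~\ref{lem:ominus-snoc}. The paper uses the anti-multiplicativity of $h$ for arbitrary (not just first-return) factorizations without comment, so your induction via associativity of $\ominus$, together with your separate treatment of $u=e$ and of uniqueness, supplies details the paper leaves implicit.
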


\begin{proof}
By Lemma~\ref{lem:ominus-snoc},
\[
  h(u \oplus \lambda(v))
    = h(\lambda(v)) \ominus h(u)
    = \gamma(h(v)) \ominus h(u)
    = h(v) \oplus \lambda(h(u)).  \qedhere
\]
\end{proof}

For example, on plane trees,
$h([\,]) = [\,]$,
$h([e]) = \gamma(e) = [e]$,
and $h([e, e, e]) = \lambda(\lambda(\lambda(e)))
= [[[e]]]$: the tree with three children of the root
maps to the path of depth three.
Figure~\ref{fig:h-example} shows $h$ acting
simultaneously on all eight families for a size-$6$
example.

\begin{lemma}\label{lem:h-gamma}
$h(\gamma(t)) = \lambda(h(t))$ for all $t$.
\end{lemma}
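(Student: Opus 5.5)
The plan is to compute directly from the last-return form of $h$ (Lemma~\ref{lem:h-last-return}), since $\gamma(t) = t \oplus \lambda(e)$ is exactly a last-return decomposition with $u = t$ and $v = e$. No induction should be needed.

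First, $\gamma(t)$ is nonempty, and its last-return decomposition is $t \oplus \lambda(e)$. This decomposition is unique: every nonempty element factors uniquely as $\lambda(s_1)\oplus\cdots\oplus\lambda(s_k)$, and here the last factor is $\lambda(e)$ with the remaining factors forming~$t$. This holds also when $t = e$, where $\gamma(e) = \lambda(e)$ and $e \oplus \lambda(e) = \lambda(e)$ by the identity law.

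Applying Lemma~\ref{lem:h-last-return} with $u = t$ and $v = e$ then gives
\[
  h(\gamma(t)) = h(t \oplus \lambda(e)) = h(e) \oplus \lambda(h(t)) = e \oplus \lambda(h(t)) = \lambda(h(t)),
\]
using $h(e) = e$ and the fact that $e$ is a two-sided identity for~$\oplus$.

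The only point needing care is that Lemma~\ref{lem:h-last-return} applies to the degenerate factorization $e \oplus \lambda(v)$, i.e.\ when $u = e$ or $v = e$. Its proof uses only Definition~\ref{def:h} and Lemma~\ref{lem:ominus-snoc}, both valid for arbitrary $u, v$. When $u = e$, the clause $h(e\oplus\lambda(v)) = h(\lambda(v))\ominus h(e)$ must be read via the $\lambda$-clause, together with $s \ominus e = s$ from Proposition~\ref{prop:secondary-catalan}. With that, the case $t = e$ agrees: $h(\gamma(e)) = h(\lambda(e)) = \gamma(e) = \lambda(e) = \lambda(h(e))$. I do not expect a real obstacle beyond checking this edge case.
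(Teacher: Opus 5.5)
Your proof is correct and is the same computation as the paper's, whose one-line argument reads $h(\gamma(t)) = h(t\oplus\lambda(e)) = h(e)\oplus\lambda(h(t)) = \lambda(h(t))$. That is exactly Lemma~\ref{lem:h-last-return} with $u=t$, $v=e$ (the paper cites Lemma~\ref{lem:ominus-snoc}, on which that lemma rests), and your remark on uniqueness of the last-return decomposition is unnecessary since the last-return formula holds as an identity for all $u,v$.
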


\begin{proof}
By Lemma~\ref{lem:ominus-snoc},
$h(\gamma(t)) = h(t \oplus \lambda(e))
  = h(e) \oplus \lambda(h(t)) = \lambda(h(t))$.
\end{proof}

\begin{theorem}\label{thm:h-involution}
$h$ is an involution.
\end{theorem}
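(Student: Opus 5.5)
The plan is to show that $h$ is an anti-isomorphism in both directions: it sends the primary structure to the secondary one, and the secondary structure back to the primary one. Applying $h$ twice then gives a map that respects $\lambda$ and $\oplus$, and an induction shows it is the identity.

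The primary-to-secondary direction is just Definition~\ref{def:h-def}: $h(\lambda(t))=\gamma(h(t))$ and $h(u\oplus v)=h(v)\ominus h(u)$. For the secondary-to-primary direction, Lemma~\ref{lem:h-gamma} already gives $h(\gamma(t))=\lambda(h(t))$. What remains is the key identity
\[
  h(u\ominus v)=h(v)\oplus h(u)\qquad\text{for all } u,v\in C .
\]
A simple induction on the three-form decomposition first shows that $h$ preserves size. In particular $h(w)\neq e$ whenever $w\neq e$.

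I would prove the key identity by induction on the three-form decomposition of $u$, mirroring the recursion defining $\ominus$.
\begin{itemize}
\item If $u=e$, both sides equal $h(v)$.
\item If $u=\lambda(s)$, then
\[
h(\lambda(s\ominus v))=\gamma(h(s\ominus v))=\gamma(h(v)\oplus h(s))
\]
by the induction hypothesis. Lemma~\ref{lem:gamma-oplus} turns this into $h(v)\oplus\gamma(h(s))=h(v)\oplus h(\lambda(s))$.
\item If $u=s\oplus w$ with $s,w\neq e$, then using the induction hypothesis on $w$,
\[
h((s\oplus w)\ominus v)=h(s\oplus(w\ominus v))=h(w\ominus v)\ominus h(s)=(h(v)\oplus h(w))\ominus h(s).
\]
The target is $h(v)\oplus(h(w)\ominus h(s))$. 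These agree by the recovery identity $t\oplus(u'\ominus v')=(t\oplus u')\ominus v'$ for $u'\neq e$, which is just the third clause of the definition of $\ominus$ read backwards. It applies here because $h(w)\neq e$.
\end{itemize}
This decomposable case is the main obstacle. It is where the reversal of argument order must interact correctly with associativity, so it needs the careful bookkeeping of which arguments are nonempty.

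Finally, I would prove $h(h(t))=t$ by induction on $t$. The case $t=e$ is trivial. For $t=\lambda(s)$,
\[
h(h(\lambda(s)))=h(\gamma(h(s)))=\lambda(h(h(s)))=\lambda(s),
\]
using Lemma~\ref{lem:h-gamma} and the induction hypothesis. For $t=u\oplus v$, the key identity and induction give
\[
h(h(u\oplus v))=h(h(v)\ominus h(u))=h(h(u))\oplus h(h(v))=u\oplus v.
\]
Hence $h\circ h=\mathrm{id}$, so $h$ is an involution.
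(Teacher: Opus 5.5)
Your proof is correct, but it is organized differently from the paper's. The paper proves involutivity directly from the last-return form of Lemma~\ref{lem:h-last-return}. Since $h(u\oplus\lambda(v))=h(v)\oplus\lambda(h(u))$, the map $h$ just swaps and transforms the two pieces of a last-return decomposition, and one induction step gives $h^2(u\oplus\lambda(v))=h^2(u)\oplus\lambda(h^2(v))=u\oplus\lambda(v)$. The identity $h(u\ominus v)=h(v)\oplus h(u)$ is obtained only afterwards, as a corollary of $h^2=\mathrm{id}$.

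You reverse this order. You prove that identity first, by structural induction on $u$ using Lemma~\ref{lem:gamma-oplus}, the recovery identity, and size preservation. Your observation that the recovery identity needs $h(w)\neq e$ is exactly right, since $(t\oplus e)\ominus v'=t\ominus v'$ differs from $t\oplus(e\ominus v')=t\oplus v'$ in general. The identity then shows that $h\circ h$ respects $\lambda$ and $\oplus$, so it must be the identity.

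Your route buys a conceptual statement: $h$ is an anti-isomorphism in both directions between $(\lambda,\oplus)$ and $(\gamma,\ominus)$, in the spirit of Theorem~\ref{thm:M-intertwine}. It also gives the corollary without presupposing involutivity. The paper's route buys brevity, since it needs no manipulation of $\ominus$ inside the induction. Note that Lemma~\ref{lem:h-gamma}, which you cite, is itself the case $v=e$ of the last-return form. So once that lemma is in hand, the paper's two-line argument is already available.
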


\begin{proof}
By induction on size. The base case
$h(h(e)) = h(e) = e$ is immediate. For
$t = u \oplus \lambda(v)$:
\[
  h^2(u \oplus \lambda(v))
    = h(h(v) \oplus \lambda(h(u)))
    = h(h(u)) \oplus \lambda(h(h(v)))
    = u \oplus \lambda(v),
\]
using Lemma~\ref{lem:h-last-return} twice and the
induction hypothesis.\footnote{The last identity in the
  proof of \cite[Theorem~5]{CKS13} should read
  $\cdots = h^2(u) \oplus h^2(v) = u \oplus v$.}
\end{proof}

\begin{corollary}
  $h(u \ominus v) = h(v) \oplus h(u)$ for all $u, v$.
\end{corollary}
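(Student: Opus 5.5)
The identity follows directly from the defining clause $h(u \oplus v) = h(v) \ominus h(u)$ of Definition~\ref{def:h-def}, combined with Theorem~\ref{thm:h-involution}. The plan is to apply that clause not to $u$ and $v$ themselves, but to their images $h(v)$ and $h(u)$, and then use $h^2 = \mathrm{id}$ to pull the result back.

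First I need the clause $h(x \oplus y) = h(y) \ominus h(x)$ for \emph{all} $x, y \in C$, not only for the decompositions singled out in the definition. The definition is stated via the three-form decomposition, so I would take the equivalent first-return form $h(\lambda(a) \oplus b) = h(b) \ominus \gamma(h(a))$ as the actual definition and prove the general clause by induction on the number of indecomposable factors of $x$.

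The degenerate cases are immediate. If $x = e$, the clause reads $h(y) = h(y) \ominus e$, which holds by the right identity of $\ominus$ from Proposition~\ref{prop:secondary-catalan}. If $y = e$, it reads $h(x) = e \ominus h(x)$, which is the first clause of $\ominus$.

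For the inductive step, write $x = \lambda(a) \oplus x'$. By associativity of $\oplus$ and the first-return form,
\[
h(x \oplus y) = h(\lambda(a) \oplus (x' \oplus y)) = h(x' \oplus y) \ominus \gamma(h(a)).
\]
Applying the induction hypothesis to $x' \oplus y$ turns this into $(h(y) \ominus h(x')) \ominus \gamma(h(a))$. Associativity of $\ominus$ (Proposition~\ref{prop:secondary-catalan}) rewrites it as $h(y) \ominus \bigl(h(x') \ominus \gamma(h(a))\bigr)$. The inner bracket equals $h(\lambda(a) \oplus x') = h(x)$, again by the first-return form. This anti-homomorphism property is the only real content of the proof. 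The obstacle is purely bookkeeping: making sure the definition is applied consistently when $x'$ or $y$ is empty. Those cases are covered by the identity laws above.

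With the general clause in hand, apply it to $x = h(v)$ and $y = h(u)$:
\[
h\bigl(h(v) \oplus h(u)\bigr) = h(h(u)) \ominus h(h(v)) = u \ominus v,
\]
where the last step uses Theorem~\ref{thm:h-involution}. Applying $h$ to both sides and using $h^2 = \mathrm{id}$ once more gives $h(u \ominus v) = h(v) \oplus h(u)$, as claimed.
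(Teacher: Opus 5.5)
Your proof is correct and matches the paper's: apply $h(x\oplus y)=h(y)\ominus h(x)$ with $x=h(v)$ and $y=h(u)$, then use $h^2=\mathrm{id}$ twice. Your preliminary induction is also sound. It shows the clause holds for arbitrary $x,y$, not only for the indecomposable-first-factor decomposition of Definition~\ref{def:h-def}, and so makes explicit a step the paper uses tacitly, both here and already in Lemma~\ref{lem:h-last-return}.
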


\begin{proof}
By the definition of $h$ and Theorem~\ref{thm:h-involution},
\[
  h(h(v) \oplus h(u)) = h(h(u)) \ominus h(h(v)) = u \ominus v.
\]
Applying $h$ to both sides and using Theorem~\ref{thm:h-involution} again,
$h(u \ominus v) = h(v) \oplus h(u)$.
\end{proof}

\begin{proposition}\label{prop:h-fixed-points}
The fixed points of $h$ are $e$ and the elements
\[
  u \oplus \lambda(h(u)) \quad \text{for } u \in C.
\]
In particular, there are $|C_k|$ fixed points of size $2k+1$ (the $k$-th Catalan
number) and none of positive even size.
\end{proposition}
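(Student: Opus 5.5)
The plan is to use the last-return form of $h$ (Lemma~\ref{lem:h-last-return}) together with the uniqueness of the last-return decomposition. First, $e$ is fixed since $h(e)=e$. Now let $t\neq e$ and write it uniquely as $t = u \oplus \lambda(v)$. By Lemma~\ref{lem:h-last-return},
\[
  h(t) = h(v) \oplus \lambda(h(u)),
\]
and the right-hand side is itself in last-return form. By uniqueness of the last-return decomposition, $h(t)=t$ holds if and only if $h(v)=u$ and $h(u)=v$. Theorem~\ref{thm:h-involution} makes these two conditions equivalent, so $t$ is fixed exactly when $v=h(u)$, that is, when $t = u\oplus\lambda(h(u))$. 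Conversely, every element of this form is fixed: applying the displayed formula gives $h(h(u))\oplus\lambda(h(u)) = u\oplus\lambda(h(u))$ by the involution property. This proves the characterization.

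For the counting statement, I would consider the map $u \mapsto u\oplus\lambda(h(u))$ from $C$ to the nonempty fixed points. It is injective, because $u$ is recovered as the first component of the unique last-return decomposition. By the previous paragraph it is also surjective onto the nonempty fixed points.

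Next I track sizes. Since $\lambda$ adds one to the size, $\oplus$ is additive in size, and $h$ preserves size (clear inductively from Lemma~\ref{lem:h-last-return}, or from its definition via $\gamma$ and $\ominus$, both of which are graded), we get $|u\oplus\lambda(h(u))| = 2|u|+1$. So the map restricts to a bijection from $C_k$ onto the fixed points of size $2k+1$. No nonempty fixed point has even size, and in particular none has positive even size; $e$ accounts for size $0$. That $|C_k|$ is the $k$-th Catalan number follows from the free-structure identification with balanced parentheses noted after Definition~\ref{def:catalan-structure}.

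There is no serious obstacle. The only point needing care is that $h$ preserves size. I would check this quickly by induction on the last-return form: $|h(v)\oplus\lambda(h(u))| = |v|+|u|+1 = |t|$.
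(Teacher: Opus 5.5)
Your proof is correct and follows essentially the same route as the paper. It writes $t=u\oplus\lambda(v)$ in last-return form and applies Lemma~\ref{lem:h-last-return} and uniqueness of that decomposition, then uses the involution property to reduce to $v=h(u)$ and counts via $|u\oplus\lambda(h(u))|=2|u|+1$. Your explicit checks of the converse, of the injectivity of $u\mapsto u\oplus\lambda(h(u))$, and of size preservation by $h$ are details the paper leaves implicit.
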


\begin{proof}
The empty element $e$ is fixed. For nonempty $t$,
write $t = u \oplus \lambda(v)$ in its last-return
decomposition. By Lemma~\ref{lem:h-last-return},
$h(t) = h(v) \oplus \lambda(h(u))$, which is also
in last-return form. By uniqueness of the
last-return decomposition, $h(t) = t$ is equivalent
to the pair of equations $h(v) = u$ and $h(u) = v$ and,
since $h$ is an involution
(Theorem~\ref{thm:h-involution}), either one
implies the other, so both are equivalent to the
single condition $h(u) = v$. The fixed points of
size $n$ are thus parametrized by $u \in C_k$ via
$u \mapsto u \oplus \lambda(h(u))$ when $n = 2k+1$,
giving $|C_k|$ fixed points, and there are none
when $n$ is even.
\end{proof}

On the more general $\beta(1,0)$-trees (rooted plane
trees with certain positive integer labels on the
nodes), the fixed-point structure is richer. Kitaev and de
Mier~\cite{KdM14} show that, in addition to the
analogue of the $u \oplus \lambda(h(u))$ construction
above, a second recursive family of fixed points
arises from the interaction of labels with the
rightmost path. They enumerate all fixed points and
obtain the sequence A006013 in the \oeis~\cite{OEIS}.

\begin{proposition}[Transport]\label{prop:transport}
Let $P(t_1, \ldots, t_k) = Q(t_1, \ldots, t_k)$ be an
equation between expressions built from $e$, $\lambda$,
$\oplus$, $\gamma$, and $\ominus$. If the equation
holds for all $t_1, \ldots, t_k$ in the free Catalan
structure, then it holds in every Catalan structure.
\end{proposition}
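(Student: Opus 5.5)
The plan is to show that every Catalan structure is isomorphic to the free one through a canonical bijection, and that canonical bijections automatically respect the derived operations $\gamma$ and $\ominus$. An equation valid in the free structure can then be pushed forward along this isomorphism.

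Let $F$ denote the free Catalan structure and $A=(A,e_A,\lambda_A,\oplus_A)$ an arbitrary one. First, I will show that the recursion defining a canonical bijection $\varphi\colon F\to A$ really defines a well-defined, size-preserving map. Define $\varphi$ on canonical expressions by recursion on size, using the first-return form:
\[
\varphi(e)=e_A,\qquad \varphi(\lambda(t))=\lambda_A(\varphi(t)),\qquad \varphi(\lambda(u)\oplus v)=\lambda_A(\varphi(u))\oplus_A\varphi(v).
\]
Associativity of $\oplus_A$ then gives $\varphi(x\oplus y)=\varphi(x)\oplus_A\varphi(y)$ for all $x,y$. I will prove this by induction on the number of indecomposable factors of $x$, using $(\lambda(u)\oplus x')\oplus y=\lambda(u)\oplus(x'\oplus y)$.

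Next I will show that $\varphi$ is a bijection. The inverse $\psi\colon A\to F$ is defined by the same recursion, now reading the unique decomposition in $A$ (Definition~\ref{def:catalan-structure}, via the first-return decomposition of $A$). Both $\psi\circ\varphi$ and $\varphi\circ\psi$ are identities, by induction on size, since each step matches the unique three-form decompositions.

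The key step is to check that $\varphi$ commutes with the derived operations. For $\gamma$ this is immediate: $\varphi(\gamma(t))=\varphi(t\oplus\lambda(e))=\varphi(t)\oplus_A\lambda_A(e_A)=\gamma_A(\varphi(t))$. For $\ominus$, I will prove $\varphi(t\ominus v)=\varphi(t)\ominus_A\varphi(v)$ by induction on the three-form decomposition of $t$, following the three defining clauses of Definition~\ref{def:secondary}. The empty case is trivial. The $\lambda$ case gives
\[
\varphi(\lambda(u\ominus v))=\lambda_A(\varphi(u)\ominus_A\varphi(v))=\lambda_A(\varphi(u))\ominus_A\varphi(v).
\]
The $\oplus$ case has $t=s\oplus u$ with $s$ indecomposable, and $\varphi(s)$ is indecomposable in $A$ because $\varphi$ preserves the $\lambda$-form. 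The clause $(\varphi(s)\oplus_A\varphi(u))\ominus_A\varphi(v)=\varphi(s)\oplus_A(\varphi(u)\ominus_A\varphi(v))$ then applies in $A$, which closes the induction.

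From this it follows that for any expression $P$ built from $e,\lambda,\oplus,\gamma,\ominus$, we have $\varphi(P^F(t_1,\dots,t_k))=P^A(\varphi(t_1),\dots,\varphi(t_k))$, by structural induction on $P$. Now take arbitrary $a_1,\dots,a_k\in A$ and set $t_i=\psi(a_i)$. Then
\[
P^A(a_1,\dots,a_k)=\varphi(P^F(t_1,\dots,t_k))=\varphi(Q^F(t_1,\dots,t_k))=Q^A(a_1,\dots,a_k),
\]
which is the claim.

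I expect the main obstacle to be the well-definedness bookkeeping. Specifically, I must verify that $\varphi$ is a homomorphism for $\oplus$ on arbitrary (non-canonical) factorizations, not only first-return ones, and that the $\ominus$ clause for decomposable first arguments is independent of the chosen factorization. The paper has already asserted the latter, and the former comes from the associativity induction described above.
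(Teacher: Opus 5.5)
Your proposal is correct and follows essentially the same route as the paper: build the canonical bijection $\varphi$ from the free structure to an arbitrary one by structural recursion, invert it by the same recursion on the target, note that it preserves $\gamma$ and $\ominus$ because these are defined from $\lambda$ and $\oplus$, and push the equation forward along $\varphi$. The only difference is that you write out inductions the paper leaves implicit: the $\oplus$-homomorphism property via associativity, and the clause-by-clause check that $\varphi$ preserves $\ominus$.
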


\begin{proof}
For any Catalan structure $(C,e,\lambda,\oplus)$,
structural recursion on the three-form decomposition
of the free structure defines a unique map~$\varphi$
satisfying $\varphi(e) = e$,
$\varphi(\lambda(t)) = \lambda(\varphi(t))$, and
$\varphi(u \oplus v) = \varphi(u) \oplus \varphi(v)$.
The same recursion applied to the three-form
decomposition of~$C$ defines an inverse
$\varphi^{-1}$ from~$C$ to the free structure, and
$\varphi \circ \varphi^{-1} = \varphi^{-1} \circ \varphi = \operatorname{id}$
by induction on the respective decompositions.
Since~$\gamma$ and $\ominus$ are defined in terms of
$\lambda$ and $\oplus$, $\varphi$ preserves them as
well. Any equation between such expressions that
holds in the free structure therefore holds in~$C$.
\end{proof}

The involution~$h$ is defined by recursion on
$\lambda$ and $\oplus$, so the same canonical bijection
$\varphi$ preserves it. Consequently, every identity
and corollary involving~$h$ proved in the free Catalan
structure, including the involution property, the
fixed-point characterization, and the equidistribution
theorem below, holds uniformly across all families.

\section{Concrete Catalan families}\label{sec:families}

We now instantiate the abstract framework on eight
concrete families. For each, we define $e$, $\lambda$,
and $\oplus$, and describe the resulting $\gamma$ and
$\ominus$ in combinatorial terms. We also give
concrete descriptions of $h$ where available.

\subsection{Rooted plane trees}\label{sec:trees}

A rooted plane tree (henceforth \emph{plane tree}) is
represented as a list of the children of the root. The
single-node tree (a leaf) has an empty child list. The
indecomposable trees are those with exactly one child of the
root.
\begin{itemize}
\item $e = [\,]$, the single-node tree (empty child
  list).
\item $\lambda(T) = [T]$: a new root with $T$ as its
  sole child.
\item $T_1 \oplus T_2$ concatenates the child lists of
  the two roots under a common root.
\item $\gamma(T)$ appends a leaf as the rightmost child
  of the root.
\item $T_1 \ominus T_2$ replaces the leaf at the end
  of the rightmost-child chain of~$T_1$ by~$T_2$.
\end{itemize}

The involution $h$ has the following description on plane trees.
If $T = [c_1, \ldots, c_k]$, then the root of $h(T)$ has
as children those of $h(c_k)$, followed by one new child.
That child's children are those of $h(c_{k-1})$, followed
by another new child, and so on until the deepest new node,
whose children are those of $h(c_1)$ followed by a leaf.

\subsection{Dyck paths}\label{sec:dyck}

Dyck paths of semilength $n$ are words over $\{u,d\}$ of
length $2n$ such that each prefix has at least as many
$u$'s as $d$'s and equally many in total.
The indecomposable Dyck paths are those of the form $uwd$
(touching the $x$-axis only at the endpoints).
\begin{itemize}
\item $e = \epsilon$, the empty path.
\item $\lambda(w) = uwd$: enclose $w$ in an up-step and
  a down-step (lift $w$ one unit).
\item $w_1 \oplus w_2 = w_1 w_2$: concatenation.
\item $\gamma(w) = wud$: append a peak.
\item $w_1 \ominus w_2$: replace the final peak~$ud$ of~$w_1$ with
  $uw_2d$. Every nonempty Dyck path has at least one peak, so this is
  well-defined, the base case being $\epsilon \ominus w_2 = w_2$.
\end{itemize}

Writing $D = D_1 \cdots D_k$ with $D_i = u w_i d$,
the involution acts by
\[
  h(D) = h(w_k)\, u\, h(w_{k-1})\, u
    \cdots u\, h(w_1)\, u\, d^k.
\]
In particular, $h(\epsilon) = \epsilon$ and
$h(uwd) = h(w) ud$.

\subsection{Triangulations}\label{sec:tri}

A triangulation of the convex $(n{+}2)$-gon is a maximal
set of non-crossing diagonals, with vertices labeled
$1, 2, \ldots, n{+}2$ clockwise. It has $n{-}1$
diagonals, $n{+}2$ boundary edges, and $n$ triangles, and we say it has \emph{size}~$n$.
An indecomposable triangulation is one in which vertices
$2$ and $n{+}2$ belong to a common triangle
(equivalently, vertex~$1$ has no incident diagonals).
\begin{itemize}
\item $e$ is the single edge $(1,2)$.
\item $\lambda(T)$: relabel each vertex $i \to i{+}1$
  in~$T$, then add a new vertex~$1$ adjacent to~$2$
  and to~$n{+}3$.
\item $T_1 \oplus T_2$: identify the edge
  $(1, n_1{+}2)$ of~$T_1$ with the edge $(1,2)$
  of~$T_2$, shifting $T_2$'s labels accordingly.
\item $\gamma(T) = T \oplus \lambda(e)$: append a
  triangle sharing the edge $(1, n{+}2)$.
\item $T_1 \ominus T_2$: if $T_1$ is decomposable, let $V$
  be its last indecomposable summand and write
  $T_1 = U \oplus V$; then
  $T_1 \ominus T_2 = U \oplus (V \ominus T_2)$. If $T_1$
  is indecomposable, write $T_1 = \lambda(T_1')$ (remove
  vertex~$1$ and its unique incident triangle, relabel
  $i \mapsto i{-}1$) and set
  $T_1 \ominus T_2 = \lambda(T_1' \ominus T_2)$.
\end{itemize}

Figure~\ref{fig:tri-operations} illustrates these four operations on small
triangulations.

\begin{figure}[htbp]
\centering
% ---- lambda ----
\begin{tikzpicture}[scale=0.8, nd/.style={circle, fill, inner sep=1.3pt},
  glue/.style={double, double distance=0.8pt}]
\begin{scope}[shift={(-2.5,0)}]
  \node[nd] (t1) at (1,0) {}; \node[nd] (t2) at (-0.5,0.866) {};
  \node[nd] (t3) at (-0.5,-0.866) {};
  \draw (t1)--(t2)--(t3)--(t1);
  \node[right=1pt,font=\small] at (t1) {$1$};
  \node[above left=0pt,font=\small] at (t2) {$2$};
  \node[below left=0pt,font=\small] at (t3) {$3$};
  % \node[font=\small] at (0,-1.7) {$T$};
\end{scope}
\draw[->] (-0.85,0)--(0.05,0); \node[above,font=\small] at (-0.4,0.07) {$\lambda$};
\begin{scope}[shift={(1.6,0)}]
  \node[nd] (s1) at (1,0) {}; \node[nd] (s2) at (0,1) {};
  \node[nd] (s3) at (-1,0) {}; \node[nd] (s4) at (0,-1) {};
  \draw (s2)--(s3)--(s4); \draw (s2)--(s4); \draw[dashed] (s2)--(s1)--(s4);
  \node[right=1pt,font=\small] at (s1) {$1$};
  \node[above=1pt,font=\small] at (s2) {$2$};
  \node[left=1pt,font=\small] at (s3) {$3$};
  \node[below=1pt,font=\small] at (s4) {$4$};
  % \node[font=\small] at (0,-1.7) {$\lambda(T)$};
\end{scope}
\end{tikzpicture}\qquad\qquad
% \par\vspace{6mm}
% ---- gamma ----
\begin{tikzpicture}[scale=0.8, nd/.style={circle, fill, inner sep=1.3pt},
  glue/.style={double, double distance=0.8pt}]
\begin{scope}[shift={(-2.5,0)}]
  \node[nd] (t1) at (1,0) {}; \node[nd] (t2) at (-0.5,0.866) {};
  \node[nd] (t3) at (-0.5,-0.866) {};
  \draw (t1)--(t2)--(t3)--(t1);
  \node[right=1pt,font=\small] at (t1) {$1$};
  \node[above left=0pt,font=\small] at (t2) {$2$};
  \node[below left=0pt,font=\small] at (t3) {$3$};
  % \node[font=\small] at (0,-1.7) {$T$};
\end{scope}
\draw[->] (-0.85,0)--(0.05,0); \node[above,font=\small] at (-0.4,0.07) {$\gamma$};
\begin{scope}[shift={(1.6,0)}]
  \node[nd] (s1) at (1,0) {}; \node[nd] (s2) at (0,1) {};
  \node[nd] (s3) at (-1,0) {}; \node[nd] (s4) at (0,-1) {};
  \draw (s1)--(s2)--(s3); \draw (s1)--(s3); \draw[dashed] (s1)--(s4)--(s3);
  \node[right=1pt,font=\small] at (s1) {$1$};
  \node[above=1pt,font=\small] at (s2) {$2$};
  \node[left=1pt,font=\small] at (s3) {$3$};
  \node[below=1pt,font=\small] at (s4) {$4$};
  % \node[font=\small] at (0,-1.7) {$\gamma(T)$};
\end{scope}
\end{tikzpicture}
\par\vspace{3mm}
% ---- oplus ----
\begin{tikzpicture}[scale=0.8, nd/.style={circle, fill, inner sep=1.3pt},
  glue/.style={double, double distance=0.8pt}]
\begin{scope}[shift={(-5.2,0)}]
  \node[nd] (a1) at (1,0) {}; \node[nd] (a2) at (0,1) {};
  \node[nd] (a3) at (-1,0) {}; \node[nd] (a4) at (0,-1) {};
  \draw (a1)--(a2)--(a3)--(a4); \draw (a1)--(a3); \draw[glue] (a4)--(a1);
  \node[right=1pt,font=\small] at (a1) {$1$};
  \node[above=1pt,font=\small] at (a2) {$2$};
  \node[left=1pt,font=\small] at (a3) {$3$};
  \node[below=1pt,font=\small] at (a4) {$4$};
  % \node[font=\small] at (0,-1.7) {$T_1$};
\end{scope}
\node at (-3.15,0) {$\oplus$};
\begin{scope}[shift={(-1.6,0)}]
  \node[nd] (b1) at (1,0) {}; \node[nd] (b2) at (-0.5,0.866) {};
  \node[nd] (b3) at (-0.5,-0.866) {};
  \draw (b1)--(b3)--(b2); \draw[glue] (b1)--(b2);
  \node[right=1pt,font=\small] at (b1) {$1$};
  \node[above left=0pt,font=\small] at (b2) {$2$};
  \node[below left=0pt,font=\small] at (b3) {$3$};
  % \node[font=\small] at (0,-1.7) {$T_2$};
\end{scope}
\node at (0.50,0) {$=$};
\begin{scope}[shift={(2.4,0)}]
  \node[nd] (c1) at (1,0) {}; \node[nd] (c2) at (0.309,0.951) {};
  \node[nd] (c3) at (-0.809,0.588) {}; \node[nd] (c4) at (-0.809,-0.588) {};
  \node[nd] (c5) at (0.309,-0.951) {};
  \draw (c1)--(c2)--(c3)--(c4)--(c5)--(c1); \draw (c1)--(c3); \draw[glue] (c1)--(c4);
  \node[right=1pt,font=\small] at (c1) {$1$};
  \node[above=1pt,font=\small] at (c2) {$2$};
  \node[above left=0pt,font=\small] at (c3) {$3$};
  \node[below left=0pt,font=\small] at (c4) {$4$};
  \node[below=1pt,font=\small] at (c5) {$5$};
  % \node[font=\small] at (0,-1.7) {$T_1\oplus T_2$};
\end{scope}
\end{tikzpicture}
\par\vspace{3mm}
% ---- ominus ----
\begin{tikzpicture}[scale=0.78, nd/.style={circle, fill, inner sep=1.3pt},
  glue/.style={double, double distance=0.8pt}]
% T_1 = square(1,3); diagonal (1,3) doubled (the U | V split)
\begin{scope}[shift={(-7.8,0)}]
  \node[nd] (a1) at (1,0) {}; \node[nd] (a2) at (0,1) {};
  \node[nd] (a3) at (-1,0) {}; \node[nd] (a4) at (0,-1) {};
  \draw (a1)--(a2)--(a3)--(a4)--(a1); \draw[glue] (a1)--(a3);
  \node[right=1pt,font=\small] at (a1) {$1$};
  \node[above=1pt,font=\small] at (a2) {$2$};
  \node[left=1pt,font=\small] at (a3) {$3$};
  \node[below=1pt,font=\small] at (a4) {$4$};
  \node[font=\small] at (0,-2.1) {$T_1$};
\end{scope}
\node at (-6.0,0) {$\ominus$};
% T_2 = triangle
\begin{scope}[shift={(-4.6,0)}]
  \node[nd] (b1) at (1,0) {}; \node[nd] (b2) at (-0.5,0.866) {};
  \node[nd] (b3) at (-0.5,-0.866) {};
  \draw (b1)--(b2)--(b3)--(b1);
  \node[right=1pt,font=\small] at (b1) {$1$};
  \node[above left=0pt,font=\small] at (b2) {$2$};
  \node[below left=0pt,font=\small] at (b3) {$3$};
  \node[font=\small] at (0,-2.1) {$T_2$};
\end{scope}
\node at (-2.6,0) {$=$};
% U = triangle; glue edge (1,3) doubled
\begin{scope}[shift={(-1.0,0)}]
  \node[nd] (u1) at (1,0) {}; \node[nd] (u2) at (-0.5,0.866) {};
  \node[nd] (u3) at (-0.5,-0.866) {};
  \draw (u1)--(u2)--(u3); \draw[glue] (u3)--(u1);
  \node[right=1pt,font=\small] at (u1) {$1$};
  \node[above left=0pt,font=\small] at (u2) {$2$};
  \node[below left=0pt,font=\small] at (u3) {$3$};
  \node[font=\small] at (0,-2.1) {$U$};
\end{scope}
\node at (0.9,0) {$\oplus$};
% V ominus T_2 = square(2,4) = lambda(T_2); glue edge (1,2) doubled
\begin{scope}[shift={(2.7,0)}]
  \node[nd] (w1) at (1,0) {}; \node[nd] (w2) at (0,1) {};
  \node[nd] (w3) at (-1,0) {}; \node[nd] (w4) at (0,-1) {};
  \draw (w2)--(w3)--(w4)--(w1); \draw (w2)--(w4); \draw[glue] (w1)--(w2);
  \node[right=1pt,font=\small] at (w1) {$1$};
  \node[above=1pt,font=\small] at (w2) {$2$};
  \node[left=1pt,font=\small] at (w3) {$3$};
  \node[below=1pt,font=\small] at (w4) {$4$};
  \node[font=\small] at (0,-2.1) {$V\ominus T_2$};
\end{scope}
\node at (4.75,0) {$=$};
% result pentagon; seam (1,3) doubled
\begin{scope}[shift={(6.6,0)}]
  \node[nd] (c1) at (1,0) {}; \node[nd] (c2) at (0.309,0.951) {};
  \node[nd] (c3) at (-0.809,0.588) {}; \node[nd] (c4) at (-0.809,-0.588) {};
  \node[nd] (c5) at (0.309,-0.951) {};
  \draw (c1)--(c2)--(c3)--(c4)--(c5)--(c1); \draw[glue] (c1)--(c3); \draw (c3)--(c5);
  \node[right=1pt,font=\small] at (c1) {$1$};
  \node[above=1pt,font=\small] at (c2) {$2$};
  \node[above left=0pt,font=\small] at (c3) {$3$};
  \node[below left=0pt,font=\small] at (c4) {$4$};
  \node[below=1pt,font=\small] at (c5) {$5$};
\end{scope}
\end{tikzpicture}
\caption{The four triangulation operations on small instances. Dashed edges are
those added by $\lambda$ and $\gamma$; double edges those identified by $\oplus$
and $\ominus$.}
\label{fig:tri-operations}
\end{figure}

\begin{proposition}[Vertex complement]\label{prop:vertex-complement}
On triangulations of the $(n{+}2)$-gon, $h$ acts by
relabeling each vertex $i$ as $n+3-i$. Equivalently,
edge $(i,j)$ maps to $(n+3-i, n+3-j)$.
\end{proposition}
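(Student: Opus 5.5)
Let $\sigma$ denote the vertex-complement map $i \mapsto n+3-i$ on triangulations of size~$n$. The plan is to show that $\sigma$ satisfies the recursion of Lemma~\ref{lem:h-last-return}:
\[
\sigma(e)=e,\qquad \sigma(U\oplus\lambda(V)) = \sigma(V)\oplus\lambda(\sigma(U)).
\]
Then a size induction gives $\sigma=h$, because Lemma~\ref{lem:h-last-return} characterizes $h$ uniquely. The base case holds since $\sigma$ swaps the two vertices of the edge $(1,2)$, and we regard the edge as unoriented. Note that $\sigma$ is a reflection of the polygon that fixes the edge $(1,2)$ setwise.

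To check the recursion, I will first give a geometric description of the last-return decomposition. Let $T=U\oplus\lambda(V)$ have size $n$, with $|U|=a$ and $|V|=b$, so $n=a+b+1$.
\begin{itemize}
\item Unwinding the definitions, $\oplus$ glues $U$ on the vertices $1,\dots,a+2$ and $\lambda(V)$ on the vertices $1,a+2,a+3,\dots,n+2$ along the diagonal $(1,a+2)$.
\item Inside $\lambda(V)$, vertex~$1$ sees only the triangle $(1,a+2,n+2)$, and $V$ occupies the vertices $a+2,\dots,n+2$.
\item Hence $T$ consists of three pieces: the triangle $\Delta=(1,a+2,n+2)$ on the boundary edge $(n+2,1)$, the polygon $U$ on $\{1,\dots,a+2\}$ attached along $(1,a+2)$, and the polygon $V$ on $\{a+2,\dots,n+2\}$ attached along $(a+2,n+2)$.
\end{itemize}
I will double-check these conventions against Figure~\ref{fig:tri-operations}. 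In particular, each sub-triangulation carries an induced labeling whose base edge is its "$(1,2)$" edge: for $U$ this is $(1,2)$ with the remaining labels increasing clockwise, and for $V$ it is $(a+2,a+3)$ with labels $i\mapsto i-a-1$.

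Now apply $\sigma$. The edge $(n+2,1)$ maps to $(1,n+2)$, so $\Delta$ remains the triangle on the closing edge. The three vertices of $\Delta$ go to
\[
\{n+2,\; n+1-a,\; 1\} = \{1,\; b+2,\; n+2\}.
\]
So $\sigma(T)$ has the same three-piece shape with the roles of $a$ and $b$ exchanged. The piece on $\{1,\dots,b+2\}$ is the image of the $V$-piece, and the piece on $\{b+2,\dots,n+2\}$ is the image of the $U$-piece. It remains to see that, in its induced labeling, the $V$-image is exactly $\sigma_b(V)$ (complement within size $b$), and similarly the $U$-image is $\sigma_a(U)$.

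This is the step I expect to be the main obstacle. The complement reverses orientation, so an induced labeling starting at the base edge is sent to a labeling running counterclockwise. The key is that within a sub-polygon, the complement of the global labels coincides with the local complement $j\mapsto m+3-j$. The check uses the affine maps $i\mapsto n+3-i$ and $i\mapsto i-c$: for the $V$-piece, a vertex $i$ has local label $j=i-a-1$, and its image $n+3-i$ has local label $(n+3-i)$ in the new piece on $\{1,\dots,b+2\}$. Since $n+3-i = b+4+a-i = b+3-j$, this is the local complement. The same computation with offset $b+1$ handles the $U$-piece.

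With this verified, $\sigma(T)=\sigma(V)\oplus\lambda(\sigma(U))$, and by induction this equals $h(V)\oplus\lambda(h(U)) = h(T)$. The statement about edges follows immediately.
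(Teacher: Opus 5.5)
Your proposal is correct and follows essentially the paper's proof: both check that vertex complement satisfies the last-return recursion of Lemma~\ref{lem:h-last-return}. Both split $T=U\oplus\lambda(V)$ into the triangle $(1,a+2,n+2)$ and the two sub-polygons, and use the identities $n+3-i=b+3-j$ and $n+3-i-(b+1)=a+3-i$ to show that the global complement restricts to the local complements. One harmless slip: for $n\ge1$ the map $i\mapsto n+3-i$ does not fix the edge $(1,2)$ setwise. It fixes the closing edge $(1,n+2)$, and that is the fact your inductive step actually uses.
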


\begin{figure}[ht]
\centering
\begin{tikzpicture}[
  nd/.style={circle, fill, inner sep=1.5pt},
  scale=0.75
]
% === Left hexagon: T ===
\begin{scope}[shift={(-4.0, 0)}]
  \node[nd] (v1) at (2, 0) {};
  \node[nd] (v2) at (1, 1.732) {};
  \node[nd] (v3) at (-1, 1.732) {};
  \node[nd] (v4) at (-2, 0) {};
  \node[nd] (v5) at (-1, -1.732) {};
  \node[nd] (v6) at (1, -1.732) {};
  \draw (v1) -- (v2) -- (v3) -- (v4) -- (v5) -- (v6) -- (v1);
  \draw (v1) -- (v3);
  \draw (v3) -- (v5);
  \draw (v1) -- (v5);
  \node[right=2pt, font=\small] at (v1) {$1$};
  \node[above right, font=\small] at (v2) {$2$};
  \node[above left, font=\small] at (v3) {$3$};
  \node[left, font=\small] at (v4) {$4$};
  \node[below left, font=\small] at (v5) {$5$};
  \node[below right, font=\small] at (v6) {$6$};
  \node[font=\small] at (0, -2.6) {$T$};
\end{scope}
% === Arrow ===
\draw[->, thick] (-0.5, 0) -- (0.5, 0);
\node[above, font=\small] at (0, 0.05) {$h$};
% === Right hexagon: h(T) = \overline{T} ===
\begin{scope}[shift={(4.0, 0)}]
  \node[nd] (w1) at (2, 0) {};
  \node[nd] (w2) at (1, 1.732) {};
  \node[nd] (w3) at (-1, 1.732) {};
  \node[nd] (w4) at (-2, 0) {};
  \node[nd] (w5) at (-1, -1.732) {};
  \node[nd] (w6) at (1, -1.732) {};
  \draw (w1) -- (w2) -- (w3) -- (w4) -- (w5) -- (w6) -- (w1);
  \draw (w6) -- (w4);
  \draw (w4) -- (w2);
  \draw (w6) -- (w2);
  \node[right=2pt, font=\small] at (w1) {$1$};
  \node[above right, font=\small] at (w2) {$2$};
  \node[above left, font=\small] at (w3) {$3$};
  \node[left=2pt, font=\small] at (w4) {$4$};
  \node[below left, font=\small] at (w5) {$5$};
  \node[below right, font=\small] at (w6) {$6$};
  \node[font=\small] at (0, -2.6) {$\overline{T}$};
\end{scope}
\end{tikzpicture}
\caption{Vertex complement $i \mapsto 7 - i$ on a
hexagon triangulation ($n = 4$). The diagonals
$\{(1,3),(3,5),(1,5)\}$ of $T$, forming the central
triangle $(1,3,5)$, map to $\{(6,4),(4,2),(6,2)\}$, the
central triangle to $(2,4,6)$.}
\label{fig:vertex-complement}
\end{figure}

\begin{proof}
Write $\overline{T}$ for the vertex-complement of a
triangulation~$T$, that is,\ the triangulation obtained by
relabeling each vertex~$i$ as $n{+}3{-}i$.
We show $\overline{T} = h(T)$ by induction on size,
using Lemma~\ref{lem:h-last-return}:
$h(e) = e$ and
$h(u \oplus \lambda(v)) = h(v) \oplus \lambda(h(u))$.

\emph{Base case.} The empty triangulation $e$ is the edge
$(1,2)$. The complement sends $1 \mapsto 2, 2 \mapsto 1$,
preserving $e$. So $\overline{e} = e$.

\emph{Inductive step.} Let $T = T_u \oplus \lambda(T_v)$ be a triangulation of
size $n = p{+}q{+}1$ on the $(n{+}2)$-gon, with $|T_u| = p$ and $|T_v| = q$. The
vertex structure in the combined polygon is $T_u$ on
$\{1, \ldots, p{+}2\}$; $T_v$'s vertex $i$ at combined vertex $p{+}i{+}1$ (via
the $\lambda$-shift $i \mapsto i{+}1$ and the $\oplus$-shift $j \mapsto p{+}j$
for $j \geq 2$); and the triangle $(1, p{+}2, n{+}2)$ from $\lambda$.  The
complement $i \mapsto n{+}3{-}i$ acts as follows:

(a) $T_v$'s vertex $i$ (at combined $p{+}i{+}1$) maps
to $(n{+}3) - (p{+}i{+}1) = q{+}3{-}i$, which is the
complement on the $(q{+}2)$-gon. So the $T_v$
sub-triangulation becomes $\overline{T_v}$ on vertices
$\{1, \ldots, q{+}2\}$.

(b) $T_u$'s vertex $i$ (at combined $i$) maps to
$n{+}3{-}i = (p{+}3{-}i) + (q{+}1)$, which is the
complement on the $(p{+}2)$-gon, shifted by $q{+}1$.
So the $T_u$ sub-triangulation becomes $\overline{T_u}$
placed on vertices $\{q{+}2, \ldots, n{+}2\}$.

(c) The triangle $(1, p{+}2, n{+}2)$ maps to
$(n{+}2, q{+}2, 1) = (1, q{+}2, n{+}2)$.

Now compare with
$\overline{T_v} \oplus \lambda(\overline{T_u})$. Here
$\lambda(\overline{T_u})$ is on the $(p{+}3)$-gon with
triangle $(1, 2, p{+}3)$, and $\overline{T_u}$ on vertices
$\{2, \ldots, p{+}3\}$. After the $\oplus$-shift
(vertex $j \mapsto q{+}j$ for $j \geq 2$),
$\overline{T_u}$ sits on $\{q{+}2, \ldots, n{+}2\}$ and
the triangle becomes $(1, q{+}2, n{+}2)$, matching
(b) and (c). By the inductive hypothesis,
$\overline{T_u} = h(T_u)$ and $\overline{T_v} = h(T_v)$, so
$\overline{T} = h(T_v) \oplus \lambda(h(T_u)) = h(T)$.
\end{proof}

\begin{corollary}
Since $h$ is vertex complement, its fixed points are the
symmetric triangulations. There are none for even polygons,
and $|C_{k}|$ for the $(2k+3)$-gon. The axis of symmetry
runs through vertex $k+2$ and the midpoint of the opposite
edge $\{1, 2k+3\}$, so one side determines the other.
\end{corollary}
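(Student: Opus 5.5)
The corollary combines Proposition~\ref{prop:vertex-complement} with the counting in Proposition~\ref{prop:h-fixed-points}. First, since $h$ is the relabeling $i \mapsto n+3-i$, a triangulation $T$ of the $(n{+}2)$-gon satisfies $h(T)=T$ exactly when its diagonal set is invariant under this relabeling. Geometrically, $i \mapsto n+3-i$ is the reflection of the regular $(n{+}2)$-gon that swaps vertices $1$ and $n{+}2$, and hence swaps $2$ and $n{+}1$, and so on. So the fixed points of $h$ are precisely the triangulations symmetric under this particular reflection. That is the sense of ``symmetric'' here: symmetric with respect to the specific axis determined by the labeling, not under an arbitrary reflection.

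For the count, I would invoke Proposition~\ref{prop:h-fixed-points} transported through the canonical bijection (Proposition~\ref{prop:transport} and the remark after it). It gives no fixed points of positive even size $n$, which are the even polygons $n+2$, and $|C_k|$ fixed points of size $n=2k+1$, which is the $(2k{+}3)$-gon. The only subtlety is the degenerate case $n=0$ (the edge $e$, itself fixed). I would note that ``even polygons'' means $n+2\ge 4$ even, or equivalently positive even size, and the statement should be read that way.

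For the axis description, take $n=2k+1$ so that $n+3 = 2k+4$. The relabeling fixes the vertex $i$ with $2i = 2k+4$, namely $i = k+2$. It swaps $1$ and $2k+3$, which are adjacent on the polygon, so the axis passes through vertex $k+2$ and the midpoint of the edge $\{1,2k+3\}$.

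For ``one side determines the other'', I would argue as follows. In a symmetric triangulation, the triangle containing the boundary edge $\{1,2k+3\}$ is mapped to itself, so its apex is the fixed vertex $k+2$. The diagonals $(1,k+2)$ and $(k+2,2k+3)$ are therefore present, and they split the polygon into two mirror-image $(k{+}2)$-gons. This matches the algebraic parametrization $u\oplus\lambda(h(u))$ with $u\in C_k$. There is no real obstacle. The only care needed is checking the fixed-vertex arithmetic and handling the trivial size-$0$ case correctly.
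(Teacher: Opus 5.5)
Your proposal is correct and follows the paper's implicit argument: combine Proposition~\ref{prop:vertex-complement} with the count in Proposition~\ref{prop:h-fixed-points}, and locate the axis through the fixed vertex given by $2i = n+3$. Your added observation that the triangle on the edge $\{1,2k+3\}$ must have apex $k+2$ is a good justification of ``one side determines the other.'' It also matches the parametrization $u\oplus\lambda(h(u))$.
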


\subsection{$231$-avoiding permutations}\label{sec:av231}

A permutation $\pi$ of $[n] = \{1, \ldots, n\}$ is $231$-avoiding if no
subsequence of length~$3$ has elements in the relative order of size $2,3,1$. We
write $\Av(231)$ for the set of $231$-avoiding permutations.  A position $i$ is
a \emph{left-to-right maximum} of~$\pi$ if $\pi(j) < \pi(i)$ for all $j < i$,
and a \emph{right-to-left minimum} of~$\pi$ if $\pi(i) < \pi(j)$ for all
$j > i$.  The indecomposable $231$-avoiding permutations are those that begin
with their maximum, that is,\ $\pi(1) = n$.  The \emph{direct sum} of two
permutations $\pi_1$ and $\pi_2$ is the concatenation
$\pi_1\oplus\pi_2=\pi_1\pi_2'$ where $\pi_2'(i)=\pi_2(i)+|\pi_1|$.  Closure
under $\lambda$ and $\oplus$ is immediate; prepending a new maximum cannot
create a 231 pattern, and direct sum preserves avoidance.
\begin{itemize}
\item $e = \epsilon$, the empty permutation.
\item $\lambda(\pi) = (n{+}1) \pi$: prepend a new
  maximum.
\item $\pi_1 \oplus \pi_2$: direct sum.\footnote{The
  notation $\oplus$ is used both for the abstract
  Catalan operation and for the direct sum of
  permutations; on $\Av(231)$ and $\Av(321)$ the two
  coincide.}
\item $\gamma(\pi) = \pi (n{+}1)$: append a new
  fixed point.
\item $\pi_1 \ominus \pi_2$: let $x = \pi_1(n)$ be the
  last entry of~$\pi_1$. Shift all values ${\geq}\, x$
  in~$\pi_1$ up by $|\pi_2|$ to open a gap, then
  append $\pi_2 + (x{-}1)$, where $\pi_2 + (x{-}1)$
  denotes the word obtained from $\pi_2$ by adding
  $x-1$ to each entry.
\end{itemize}
That $|\Av(231) \cap S_n| = |C_n|$ is
classical~\cite[\S2.2.1, Ex.~4--5]{Knuth68}.
No closed form for $h$ on $\Av(231)$ is known; $h$ is
computed via the generic last-return recursion of
Lemma~\ref{lem:h-last-return}.

\subsection{$321$-avoiding permutations}\label{sec:av321}

A permutation $\pi$ of $[n]$ is $321$-avoiding if no
subsequence of length~$3$ is decreasing. We write
$\Av(321)$ for the set of $321$-avoiding permutations.
The indecomposable $321$-avoiding permutations are
those whose only direct-sum component is the whole
permutation. Unlike the other
families, the $\lambda$ operation on $\Av(321)$ is not
immediate from the definition. The construction below
is the map~$\beta$ of~\cite{CK08}; we prove closure
and bijectivity.
\begin{itemize}
\item $e = \epsilon$, the empty permutation.
\item $\lambda(\pi)$: insert $n{+}1$ at the position
  of~$1$, then cyclically shift a distinguished subset
  of positions (described below).
\item $\pi_1 \oplus \pi_2$: direct sum.
\item $\gamma(\pi) = \pi (n{+}1)$: append a new
fixed point.
\item $\pi_1 \ominus \pi_2$: write
  $\pi_1 = \alpha_1 \oplus \cdots \oplus \alpha_k$ in its
  components and replace the last, $\alpha_k$, by
  $\alpha_k \ominus \pi_2$. Since $\alpha_k$ is
  indecomposable, $\alpha_k = \lambda(u)$ and
  $\alpha_k \ominus \pi_2 = \lambda(u \ominus \pi_2)$, with
  $\lambda$ as defined below.
\end{itemize}

More precisely, $\lambda(\pi)$ works as follows.

\paragraph{Pseudocode.}
\begin{enumerate}
\item Let $n = |\pi|$. If $n = 0$, return $1$.
\item Let $i$ be the position of $1$ in $\pi$.
\item Identify the ``boxed'' positions in $\pi$: these
  are the left-to-right maxima of~$\pi$ to the right of
  position~$i$ that are not also right-to-left minima
  of~$\pi$.
\item Insert $n{+}1$ at position $i$, shifting subsequent
  entries right, and add this new position to the boxed
  set.
\item Cyclically shift the values at the boxed positions
  one step to the left: the value at each boxed position
  moves to the previous boxed position, and the first
  boxed value wraps to the last boxed position.
\item Return the result.
\end{enumerate}

An example is shown in
Figure~\ref{fig:321-lambda}: $\pi = 241357698$ has
$i = 3$ and boxed positions $\{6, 8\}$ (the
left-to-right maximum values $7$ and $9$ to the right
of $1$ are not right-to-left minima). Inserting $10$
at position $3$ and boxing it gives boxed set
$\{3, 7, 9\}$; the cyclic left-shift of $10, 7, 9$
produces $7, 9, 10$.

\begin{figure}[ht]
\centering
\[
\arraycolsep=1.5pt
\begin{array}{r*{10}{c}}
\pi \;= & \ub{2} & \ub{4} & \ub{1} & \ub{3} & \ub{5}
  & \bx{7} & \ub{6} & \bx{9} & \ub{8} & \\[4pt]
  & \ub{2} & \ub{4} & \bx{10} & \ub{1} & \ub{3}
  & \ub{5} & \bx{7} & \ub{6} & \bx{9} & \ub{8} \\[4pt]
\lambda(\pi) \;= & \ub{2} & \ub{4} & \bx{7} & \ub{1}
  & \ub{3} & \ub{5} & \bx{9} & \ub{6} & \bx{10}
  & \ub{8}
\end{array}
\]
\caption{The map $\lambda$ on $\Av(321)$. First row:
box the left-to-right maxima to the right of~$1$ that
are not right-to-left minima. Second row: insert
$n{+}1$ at the position of~$1$ and box it. Third row:
cyclically shift the boxed values one step to the
left.}
\label{fig:321-lambda}
\end{figure}

\begin{lemma}\label{lem:321-nonlmax}
A permutation $\tau$ avoids $321$ if and only if the
subsequence of values at positions that are not
left-to-right maxima is increasing.
\end{lemma}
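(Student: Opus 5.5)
We prove the two implications separately, writing $L$ for the set of positions of $\tau$ that are left-to-right maxima and $N$ for the complementary set of positions.

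For the forward direction, assume $\tau$ avoids $321$ and suppose the values at positions in $N$ are not increasing. Then there are positions $j<k$ in $N$ with $\tau(j)>\tau(k)$. Since $j$ is not a left-to-right maximum, some position $i<j$ satisfies $\tau(i)>\tau(j)$. Then $\tau(i)>\tau(j)>\tau(k)$ with $i<j<k$ is an occurrence of $321$, a contradiction.

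For the converse, assume the values at positions in $N$ increase, and suppose $\tau$ contains an occurrence $\tau(a)>\tau(b)>\tau(c)$ with $a<b<c$. Neither $b$ nor $c$ is a left-to-right maximum, because the larger value $\tau(a)$ appears earlier than both. Hence $b,c\in N$ with $b<c$ and $\tau(b)>\tau(c)$, contradicting the assumption that the values on $N$ increase.

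There is no real obstacle here; the one point to state explicitly in the converse is that both smaller entries of a $321$ occurrence are dominated by the leading entry, so both lie in $N$.
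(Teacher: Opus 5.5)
Your proof is correct and follows essentially the same argument as the paper: the forward direction uses the earlier larger entry witnessing that $j$ is not a left-to-right maximum to complete a $321$. The converse observes that the two smaller entries of any $321$ occurrence lie at non-left-to-right-maximum positions and form a descent there.
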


\begin{proof}
($\Rightarrow$) Suppose $b < c$ are positions, neither
a left-to-right maximum, with $\tau(b) > \tau(c)$.
Since $b$ is not a left-to-right maximum, there exists
$a < b$ with $\tau(a) > \tau(b) > \tau(c)$, a $321$
pattern.

($\Leftarrow$) If $\tau$ contains a $321$ pattern
$\tau(a) > \tau(b) > \tau(c)$ with $a < b < c$, then
neither $b$ nor $c$ is a left-to-right maximum,
so the subsequence of values
at non-left-to-right-maximum positions has a descent.
\end{proof}

\begin{proposition}\label{prop:321-closure}
If $\pi$ avoids $321$, then $\lambda(\pi)$ avoids $321$.
\end{proposition}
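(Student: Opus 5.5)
The plan is to use the criterion of Lemma~\ref{lem:321-nonlmax}: write $\tau=\lambda(\pi)$ and show that the values of $\tau$ at positions that are not left-to-right maxima form an increasing sequence. Since $\pi$ avoids $321$, the same lemma applied to $\pi$ says that the non-left-to-right-maximum values of $\pi$ already increase. So the whole task is to see how the set of left-to-right maxima changes when we pass from $\pi$ to $\tau$. The case $n=0$ is trivial, so assume $n\ge 1$.

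\emph{Notation.} Let $i$ be the position of $1$ in $\pi$. Let $b_1<\dots<b_m$ be the boxed positions of $\pi$ and $M_1<\dots<M_m$ their values; these increase because they are left-to-right maxima. After inserting $n{+}1$ at position $i$, the boxed positions of the intermediate word are $p_0=i<p_1<\dots<p_m$, where $p_j=b_j+1$. The cyclic left shift puts $M_{j+1}$ at $p_j$ for $0\le j<m$ and $n{+}1$ at $p_m$. Every non-boxed position keeps its value: positions before $i$ are unchanged, and positions after $i$ shift right by one.

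\emph{Step 1: boxed positions stay left-to-right maxima.} Each position $p_j$ is a left-to-right maximum of $\tau$.
\begin{itemize}
\item At $p_m$ this holds because its value is $n{+}1$.
\item For $j<m$, the value $M_{j+1}$ exceeds every entry of $\pi$ before position $b_{j+1}$. Every entry of $\tau$ before $p_j$ is either such an entry or a boxed value $M_{k}$ with $k\le j$, which is smaller than $M_{j+1}$. For $j=0$ this includes all entries before $i$.
\end{itemize}
Every other left-to-right maximum of $\pi$ lying before $i$ is untouched, since the prefix before $i$ is unchanged. Every non-left-to-right maximum of $\pi$ stays a non-left-to-right maximum in $\tau$, because any larger entry to its left is still to its left, or has been replaced by a larger boxed value.

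\emph{Step 2: the only demoted entries are easy to place.} The only entries that can lose left-to-right-maximum status are the non-boxed left-to-right maxima of $\pi$ to the right of $i$. By definition these are also right-to-left minima of $\pi$. Let $x$ be one of them. In $\pi$, every non-left-to-right-maximum entry before $x$ is smaller than $x$, because $x$ is a left-to-right maximum. Every entry after $x$ is larger than $x$, because $x$ is a right-to-left minimum. Non-boxed entries keep their relative positions in $\tau$. Hence the sequence consisting of the non-left-to-right-maximum values of $\pi$, together with the demoted entries, is still increasing in the order of $\tau$. By Step~1, this sequence contains every non-left-to-right-maximum value of $\tau$, so by Lemma~\ref{lem:321-nonlmax} the permutation $\tau$ avoids $321$.

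\emph{Main obstacle.} The delicate point is Step~1, in particular the comparison at $p_0=i$ when $m\ge1$. There one needs $M_1$ to exceed the whole prefix before $i$, which holds because $M_1$ is a left-to-right maximum of $\pi$. One must also check that the cyclic shift never places a value smaller than some earlier entry at a boxed position. I would verify these inequalities carefully, including the degenerate case $m=0$, where $\tau$ is simply $\pi$ with $n{+}1$ inserted at position $i$.
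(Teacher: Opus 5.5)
Your proof is correct and follows essentially the same route as the paper's. Both apply Lemma~\ref{lem:321-nonlmax}, show that the boxed positions and the left-to-right maxima before position~$i$ remain left-to-right maxima of $\lambda(\pi)$, and then check that the remaining non-boxed values increase by separating the non-left-to-right maxima of $\pi$ from the non-boxed left-to-right maxima, which are right-to-left minima.

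There is one small slip. When $\pi(1)=1$, the entry $1$ is a left-to-right maximum of $\pi$ lying neither before nor strictly to the right of position~$i$, and it is demoted in $\lambda(\pi)$ (for example, $\pi=12$ gives $312$). So position~$i$ must be added to your list of demoted entries. Since $1$ is a right-to-left minimum, your Step~2 argument then covers it without change.
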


\begin{proof}
For $n = 0$ the result is trivial
($\lambda(e) = 1$). Assume $n \geq 1$ and let $p$ be
the position of $1$ in $\pi$. Write
$\sigma = \lambda(\pi)$. We show that the values of
$\sigma$ at positions that are not left-to-right
maxima form an increasing subsequence; by
Lemma~\ref{lem:321-nonlmax}, this implies
$\sigma \in \Av(321)$.

\emph{Left-to-right maxima of $\sigma$.} Let
$j_1 < \cdots < j_m$ be the boxed positions in $\pi$
and write $b_0 = p,\; b_k = j_k + 1$ for
$1 \leq k \leq m$. The left-to-right maxima of $\sigma$
are exactly the positions $j < p$ together with
$b_0, \ldots, b_m$. Indeed, every $j < p$ is a
left-to-right maximum of $\pi$ (otherwise some
$k < j$ with $\pi(k) > \pi(j) > 1 = \pi(p)$ gives a
$321$ pattern); these positions retain their values
in $\sigma$ and remain left-to-right maxima. The
shifted values
$\sigma(b_0) = \pi(j_1),\; \sigma(b_1) = \pi(j_2),
\;\ldots,\; \sigma(b_{m-1}) = \pi(j_m),\;
\sigma(b_m) = n{+}1$ are strictly increasing (since
$j_1 < \cdots < j_m$ are successive left-to-right
maxima of $\pi$), so each $b_k$ is a left-to-right
maximum of $\sigma$.
Conversely, a non-boxed position $j > p$ has
$\sigma(j) = \pi(j{-}1)$, and the nearest preceding
boxed position $b_k$ satisfies
$\sigma(b_k) > \sigma(j)$ (since $\sigma(b_k)$ is
either $\pi(j_{k+1})$ with $j_{k+1} > j{-}1$ and
$j_{k+1}$ a left-to-right maximum, or $n{+}1$), so
$j$ is not a left-to-right maximum.

\emph{Values at non-left-to-right-maximum positions
are increasing.} These positions in $\sigma$ are the
non-boxed positions $> p$, with values
$\pi(p), \pi(q_1), \ldots, \pi(q_r)$ where
$p < q_1 < \cdots < q_r$ are the non-boxed positions
$> p$ in $\pi$. Since $\pi(p) = 1$ is minimal, it
suffices to show $\pi(q_i) < \pi(q_j)$ for $i < j$.
If $q_i$ is not a left-to-right maximum of $\pi$,
some $k < q_i$ has $\pi(k) > \pi(q_i)$, so
$\pi(q_i) \geq \pi(q_j)$ would yield a $321$ pattern
at $k < q_i < q_j$. If $q_i$ is a left-to-right
maximum of $\pi$ (hence also a right-to-left minimum,
since non-boxed left-to-right maxima to the right of
$1$ are right-to-left minima), then
$\pi(q_i) < \pi(\ell)$ for all $\ell > q_i$.
\end{proof}

\begin{proposition}\label{prop:321-bijectivity}
The map $\lambda$ is a bijection from $\Av(321)$ of
size $n$ to the set of indecomposable $321$-avoiding
permutations of size $n{+}1$.
\end{proposition}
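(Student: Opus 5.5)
We will show three things: $\lambda(\pi)$ is indecomposable, $\lambda$ is injective, and every indecomposable $\sigma\in\Av(321)$ of size $n{+}1$ equals $\lambda(\pi)$ for some $\pi$. Closure under $321$-avoidance is Proposition~\ref{prop:321-closure}. Injectivity and surjectivity will both come from an explicit inverse. We then finish either by this inverse or by counting.

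\emph{Indecomposability.} Write $\sigma=\lambda(\pi)$. The proof of Proposition~\ref{prop:321-closure} shows that $n{+}1$ sits at the last boxed position $b_m$. It also shows that the value $1=\pi(p)$ sits at position $p{+}1$, which is non-boxed. If $m\ge 1$, the prefix up to position $p$ contains $\sigma(b_0)=\pi(j_1)>1$ but not $1$, so no prefix ending before $p{+}1$ is a block. If $m=0$, then $n{+}1$ is at position $p$ and $1$ is after it, so again no split is possible before $p{+}1$. For a split after $p{+}1$, a prefix $\sigma(1..k)$ that is a block must contain $n{+}1$, forcing $k=n{+}1$ or $k\ge b_m$. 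When $k\ge b_m$ with $k<n{+}1$, the prefix contains $n{+}1$ while some smaller value lies outside it, so it is not $\{1,\dots,k\}$. I expect these checks to reduce to the observation that the boxed values other than the last are left-to-right maxima of $\pi$ that are not right-to-left minima. Each such value $\pi(j_k)$ has a smaller entry to its right, which prevents any block from closing.

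\emph{Inverse map.} Given an indecomposable $321$-avoiding $\sigma$ of size $n{+}1$, let $p{+}1$ be the position of $1$. Let $b_0<b_1<\dots<b_m$ be the left-to-right maxima of $\sigma$ at positions $\ge p$, with $b_m$ the position of $n{+}1$. The candidate inverse performs three steps:
\begin{enumerate}
\item cyclically shift the values at $b_0,\dots,b_m$ one step to the right, so that $n{+}1$ returns to position $p$;
\item delete $n{+}1$;
\item shift the remaining positions down by one.
\end{enumerate}
Call the result $\pi$.

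The main obstacle is proving that the boxed set of $\pi$, computed by the rule in step 3 of the pseudocode, is exactly $\{b_1-1,\dots,b_m-1\}$. This requires two facts:
\begin{itemize}
\item each $b_k$ with $k<m$ lies before $b_m$;
\item the left-to-right maxima of $\pi$ to the right of $1$ that are not right-to-left minima are precisely the shifted values $\sigma(b_0),\dots,\sigma(b_{m-1})$ at their new positions.
\end{itemize}
Indecomposability of $\sigma$ is what should guarantee the second fact. An extra left-to-right maximum of $\pi$ that is also a right-to-left minimum would yield a direct-sum split of $\sigma$, and an extra non-minimum would contradict the list of left-to-right maxima of $\sigma$. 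Once this identification holds, the two compositions are the identity by direct comparison of the cyclic shifts. The identification of left-to-right maxima already carried out in the proof of Proposition~\ref{prop:321-closure} does half of this work.

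\emph{Fallback by counting.} If the surjectivity bookkeeping becomes messy, injectivity alone suffices. Injectivity follows from the same recovery argument applied only to images: from $\sigma=\lambda(\pi)$ one reads off $p$ and the $b_k$, as that proof shows. We then conclude by cardinality. Indecomposable $321$-avoiders of size $n{+}1$ are counted by $c_n$, since by unique direct-sum decomposition their generating function $I$ satisfies $C=1/(1-I)$. Solving gives $I=1-1/C=xC$, using $C=1+xC^2$, where $C=\sum_n c_n x^n$ is the Catalan generating function.
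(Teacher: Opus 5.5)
Your fallback (closure, indecomposability of the image, recovering $\pi$ from $\sigma=\lambda(\pi)$ via the position $p{+}1$ of $1$ and the left-to-right maxima of $\sigma$ at positions $\ge p$, then the count $I=xC$) is exactly the paper's proof, and it goes through. One sentence in your indecomposability paragraph is wrong, though. A prefix of length $k<n{+}1$ that is a block cannot contain $n{+}1$, so the claim that it ``must contain $n{+}1$'' is false. Read literally, it skips the only nontrivial range $p+1\le k<b_m$.

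Your closing idea does cover that range once you check where the smaller entry lands. Let $b_l$ be the largest boxed position $\le k$, so $l<m$, $\sigma(b_l)=\pi(j_{l+1})$ and $b_{l+1}=j_{l+1}+1>k$. An entry of $\pi$ to the right of $j_{l+1}$ that is smaller than $\pi(j_{l+1})$ is not a left-to-right maximum, hence not boxed. In $\sigma$ it therefore sits at a position $>b_{l+1}>k$. So the length-$k$ prefix contains $\sigma(b_l)$ but not this smaller value, and is not $\{1,\dots,k\}$. (The paper instead notes $\sigma(b_l)>k$.)

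The explicit inverse you put forward as the main route is a genuinely different, counting-free argument. It can be completed, but you have placed indecomposability in the wrong spot. Left-to-right maxima of the recovered $\pi$ that are also right-to-left minima are harmless and do occur for indecomposable $\sigma$: for $\sigma=312$ you get $\pi=12$. Such positions are simply not boxed and give no split of $\sigma$. What must be shown is:
\begin{itemize}
\item[(i)] every position $l>p$ of $\pi$ other than the $b_k-1$ is a right-to-left minimum of $\pi$;
\item[(ii)] each relocated value $\sigma(b_{k-1})$, now at position $b_k-1$, is \emph{not} a right-to-left minimum of $\pi$.
\end{itemize}

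Fact (i) needs only $321$-avoidance of $\sigma$. The value $\sigma(l{+}1)$ is a non-left-to-right maximum of $\sigma$, so by Lemma~\ref{lem:321-nonlmax} later such values are larger. Every relocated value to its right is at least $\sigma(b_j)>\sigma(l{+}1)$, where $b_j$ is the last $b$ before $l{+}1$.

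Fact (ii) is where indecomposability enters. The length-$(b_k-1)$ prefix of $\sigma$ has maximum $\sigma(b_{k-1})$ and is not $\{1,\dots,b_k-1\}$. So some smaller value lies beyond $b_k$ at a non-boxed position, i.e.\ to the right of $b_k-1$ in $\pi$.

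Add the routine checks that the relocated values are left-to-right maxima of $\pi$ and that $\pi$ avoids $321$, and this gives $\lambda(\pi)=\sigma$. Surjectivity then follows without the classical count $|\Av(321)\cap S_n|=|C_n|$. The price is more bookkeeping than the paper's cardinality argument.
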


\begin{proof}
\emph{Image.} Let $p$ be the position of $1$ in $\pi$. By
Proposition~\ref{prop:321-closure}, $\sigma = \lambda(\pi)$
avoids $321$, and $|\sigma| = n{+}1$, so it remains to verify
that $\sigma$ is indecomposable. A component boundary at a
position $k$ with $1 \leq k < n{+}1$ would require
$\{1, \ldots, k\}$ to be exactly the first $k$ values of
$\sigma$. For $k \leq p$, the value $1$ lies at position
$p{+}1 > k$ (it is shifted one place right by the insertion of
$n{+}1$ at position $p$, and position $p{+}1$ is not boxed), so
$1$ is not among the first $k$ values, a contradiction. For
$k > p$, the left-to-right-maxima claim in the closure proof
gives a boxed position $b \leq k$ with $\sigma(b) > k$ (the
largest boxed position not exceeding $k$), so the first $k$
values contain $\sigma(b) \notin \{1, \ldots, k\}$, again a
contradiction. Hence $\sigma$ is indecomposable.

\emph{Injectivity.} Given $\sigma = \lambda(\pi)$, we
recover $\pi$ uniquely. As noted, the value $1$ appears in
$\sigma$ at position $p{+}1$, so $p$ is determined by $\sigma$. The boxed
positions used in the cyclic shift are exactly the
left-to-right maxima of $\sigma$ at positions $\geq p$
(by the left-to-right-maxima claim in the closure
proof), identifiable
from $\sigma$ alone. Reversing the cyclic shift at
these positions and deleting $n{+}1$ at position~$p$
recovers $\pi$ uniquely.

\emph{Cardinality.}
It is well known that $|\Av(321) \cap S_n| = |C_n|$.
The
generating function for indecomposable elements of
any Catalan-counted family is $xC(x)$ (since
$C(x) = 1/(1-xC(x))$), so the number of
indecomposable $321$-avoiders of size $n{+}1$ is also
$|C_n|$. An injective map between two sets of equal
finite cardinality is a bijection.
\end{proof}

No closed form for $h$ on $\Av(321)$ is known; $h$ is
computed via the generic last-return recursion of
Lemma~\ref{lem:h-last-return}.

\subsection{Binary trees}\label{sec:binary}

A binary tree is either the empty tree $\varnothing$
(no nodes) or a node with a left and a right subtree, that is, a tuple $(L,R)$,
each itself a binary tree. An indecomposable binary tree is one of the
form $(T, \varnothing)$, that is,\ a root whose
right subtree is empty. The \emph{right spine} is the
maximal sequence of right edges from the root.
\begin{itemize}
\item $e = \varnothing$, the empty tree.
\item $\lambda(T) = (T, \varnothing)$: make $T$ the left
  child of a new root with no right child.
\item $T_1 \oplus T_2$ attaches $T_2$ at the bottom of
  the right spine of~$T_1$. Explicitly,
  $\varnothing \oplus B = B$ and
  $(T, B_1) \oplus B_2 = (T, B_1 \oplus B_2)$.
\item $\gamma(B) = B \oplus (\varnothing, \varnothing)$:
  adjoin a leaf at the end of the right spine.
\item $B \ominus C$: descend from the root, at each node
  taking the right subtree if it is nonempty and the
  left subtree otherwise, until reaching a leaf; replace
  that leaf by $\lambda(C) = (C, \varnothing)$. If
  $B = \varnothing$, the result is~$C$.
\end{itemize}

No closed form for $h$ on binary trees is known; $h$ is
computed via the generic last-return recursion of
Lemma~\ref{lem:h-last-return}.

\subsection{$2 \times n$ standard Young tableaux}\label{sec:syt}

A $2 \times n$ standard Young tableaux (SYT) is a filling of a $2 \times n$ grid
with $1, 2, \ldots, 2n$ such that rows and columns
increase. An indecomposable SYT is one that cannot be split
as $T_1 \oplus T_2$ with both parts nonempty, that is,\ there
is no $k$ with $0 < k < n$ such that the first $k$ columns
contain exactly $\{1, \ldots, 2k\}$.
\begin{itemize}
\item $e = ([\,], [\,])$, the empty tableau.
\item $\lambda(T)$: increment all entries by~$1$,
  prepend~$1$ to the top row, and append $2n{+}2$ to
  the bottom row.
\item $T_1 \oplus T_2$: shift all entries of~$T_2$ up
  by $2|T_1|$ and adjoin the columns to the right.
\item $\gamma(T)$: append the column
  $(2n{+}1, 2n{+}2)$.
\item $T_1 \ominus T_2$: if $T_1$ is empty, $T_1 \ominus T_2 = T_2$.
  If $T_1$ is decomposable, let $V$
  be its last indecomposable summand and $U$ the rest, so
  $T_1 = U \oplus V$; then
  $T_1 \ominus T_2 = U \oplus (V \ominus T_2)$. If $T_1$
  is indecomposable, write $T_1 = \lambda(T_1')$ (remove
  $1$ from the top row and $2n$ from the bottom row,
  subtract $1$ from all remaining entries) and set
  $T_1 \ominus T_2 = \lambda(T_1' \ominus T_2)$.
\end{itemize}
See Figure~\ref{fig:syt-ops}.

\begin{figure}[ht]
\centering
\begin{tikzpicture}[baseline=-2.75pt, scale=0.5]
% === T ===
\begin{scope}
  \foreach \i in {0,1} {
    \draw (\i,0) rectangle (\i+1,1);
    \draw (\i,-1) rectangle (\i+1,0);
  }
  \foreach \i/\v in {0/1,1/2}
    \node[font=\scriptsize] at (\i+0.5,0.5) {\v};
  \foreach \i/\v in {0/3,1/4}
    \node[font=\scriptsize] at (\i+0.5,-0.5) {\v};
  \node[font=\small] at (1, -1.8) {$T$};
\end{scope}
% === lambda(T) ===
\begin{scope}[shift={(5, 0)}]
  \foreach \i in {0,1,2} {
    \draw (\i,0) rectangle (\i+1,1);
    \draw (\i,-1) rectangle (\i+1,0);
  }
  \foreach \i/\v in {0/1,1/2,2/3}
    \node[font=\scriptsize] at (\i+0.5,0.5) {\v};
  \foreach \i/\v in {0/4,1/5,2/6}
    \node[font=\scriptsize] at (\i+0.5,-0.5) {\v};
  \node[font=\small] at (1.5, -1.8) {$\lambda(T)$};
\end{scope}
% === gamma(T) ===
\begin{scope}[shift={(11, 0)}]
  \foreach \i in {0,1,2} {
    \draw (\i,0) rectangle (\i+1,1);
    \draw (\i,-1) rectangle (\i+1,0);
  }
  \foreach \i/\v in {0/1,1/2,2/5}
    \node[font=\scriptsize] at (\i+0.5,0.5) {\v};
  \foreach \i/\v in {0/3,1/4,2/6}
    \node[font=\scriptsize] at (\i+0.5,-0.5) {\v};
  \node[font=\small] at (1.5, -1.8) {$\gamma(T)$};
\end{scope}
\end{tikzpicture}
\caption{Actions of $\lambda$ and $\gamma$ on a
$2 \times 2$ SYT $T$. The operation $\lambda$ increments
all entries by $1$, prepends $1$ to the top row, and
appends $2n{+}2 = 6$ to the bottom row, whereas $\gamma$ appends
the column $(2n{+}1, 2n{+}2) = (5, 6)$.}
\label{fig:syt-ops}
\end{figure}

No closed form for $h$ on $2 \times n$ SYT is known;
$h$ is computed via the generic last-return recursion
of Lemma~\ref{lem:h-last-return}. For example,
\[
  h\!\left[\,
  \begin{tikzpicture}[baseline=-2.75pt, scale=0.45]
    \foreach \i in {0,1,2} {
      \draw (\i,0) rectangle (\i+1,1);
      \draw (\i,-1) rectangle (\i+1,0);
    }
    \foreach \i/\v in {0/1,1/2,2/4}
      \node[font=\scriptsize] at (\i+0.5,0.5) {\v};
    \foreach \i/\v in {0/3,1/5,2/6}
      \node[font=\scriptsize] at (\i+0.5,-0.5) {\v};
  \end{tikzpicture}
  \,\right]
  = (\gamma\circ h)\!\left[\,
  \begin{tikzpicture}[baseline=-2.75pt, scale=0.45]
    \foreach \i in {0,1} {
      \draw (\i,0) rectangle (\i+1,1);
      \draw (\i,-1) rectangle (\i+1,0);
    }
    \foreach \i/\v in {0/1,1/3}
      \node[font=\scriptsize] at (\i+0.5,0.5) {\v};
    \foreach \i/\v in {0/2,1/4}
      \node[font=\scriptsize] at (\i+0.5,-0.5) {\v};
  \end{tikzpicture}
  \,\right]
  = \gamma\!\left[\,
  \begin{tikzpicture}[baseline=-2.75pt, scale=0.45]
    \foreach \i in {0,1} {
      \draw (\i,0) rectangle (\i+1,1);
      \draw (\i,-1) rectangle (\i+1,0);
    }
    \foreach \i/\v in {0/1,1/2}
      \node[font=\scriptsize] at (\i+0.5,0.5) {\v};
    \foreach \i/\v in {0/3,1/4}
      \node[font=\scriptsize] at (\i+0.5,-0.5) {\v};
  \end{tikzpicture}
  \,\right]
  =\;
  \begin{tikzpicture}[baseline=-2.75pt, scale=0.45]
    \foreach \i in {0,1,2} {
      \draw (\i,0) rectangle (\i+1,1);
      \draw (\i,-1) rectangle (\i+1,0);
    }
    \foreach \i/\v in {0/1,1/2,2/5}
      \node[font=\scriptsize] at (\i+0.5,0.5) {\v};
    \foreach \i/\v in {0/3,1/4,2/6}
      \node[font=\scriptsize] at (\i+0.5,-0.5) {\v};
  \end{tikzpicture}\,.
\]

\subsection{Non-crossing partitions}\label{sec:ncp}

A non-crossing partition~\cite{Kreweras72} of $[n]$ is a
set partition whose blocks do not cross, that is, there are no
$a < b < c < d$ with $a, c$ in one block and $b, d$ in
another. An indecomposable
non-crossing partition is one in which $1$ and $n$ belong
to the same block. The decomposition below, in which
$\lambda$ adds a new maximum to the block of~$1$,
appears in~\cite[Proposition~9.4]{NicaSpeicher06}.
\begin{itemize}
\item $e$ is the empty partition.
\item $\lambda(P)$: add $n{+}1$ to the block
  containing~$1$. In particular,
  $\lambda(e) = \{\{1\}\}$.
\item $P_1 \oplus P_2 = P_1 \cup (P_2 + |P_1|)$:
  shift the elements of~$P_2$ by $|P_1|$ and take the
  union.
\item $\gamma(P)$: adjoin the singleton block
  $\{n{+}1\}$.
\item $P_1 \ominus P_2$: if $P_1$ is empty, $P_1 \ominus P_2 = P_2$.
  If $P_1$ is decomposable, let $V$
  be its last indecomposable summand and $U$ the rest, so
  $P_1 = U \oplus V$; then
  $P_1 \ominus P_2 = U \oplus (V \ominus P_2)$. If $P_1$
  is indecomposable, write $P_1 = \lambda(P_1')$ (remove
  $n$ from the block of~$1$, leaving a partition of
  $[n{-}1]$) and set
  $P_1 \ominus P_2 = \lambda(P_1' \ominus P_2)$, that is,\
  add the new maximum to the block of~$1$.
\end{itemize}
See Figure~\ref{fig:ncp-ops}.

\begin{figure}[ht]
\centering
\begin{tikzpicture}[
  dot/.style={circle, fill, inner sep=1.2pt},
  scale=0.75
]
% === P ===
\begin{scope}
  \foreach \i in {1,...,4}
    \node[dot] (p\i) at (\i, 0) {};
  \foreach \i in {1,...,4}
    \node[below=3pt, font=\scriptsize] at (\i, 0) {\i};
  \draw (p1) to[bend left=55] (p4);
  \draw (p2) to[bend left=70] (p3);
  \node[font=\small] at (2.5, -1.3) {$P$};
\end{scope}
% === lambda(P) ===
\begin{scope}[shift={(5.5, 0)}]
  \foreach \i in {1,...,5}
    \node[dot] (q\i) at (\i, 0) {};
  \foreach \i in {1,...,5}
    \node[below=3pt, font=\scriptsize] at (\i, 0) {\i};
  \draw (q1) to[bend left=55] (q4);
  \draw (q4) to[bend left=70] (q5);
  \draw (q2) to[bend left=70] (q3);
  \node[font=\small] at (3, -1.3) {$\lambda(P)$};
\end{scope}
% === gamma(P) ===
\begin{scope}[shift={(12, 0)}]
  \foreach \i in {1,...,5}
    \node[dot] (r\i) at (\i, 0) {};
  \foreach \i in {1,...,5}
    \node[below=3pt, font=\scriptsize] at (\i, 0) {\i};
  \draw (r1) to[bend left=55] (r4);
  \draw (r2) to[bend left=70] (r3);
  \node[font=\small] at (3, -1.3) {$\gamma(P)$};
\end{scope}
\end{tikzpicture}
\caption{The non-crossing partition
$P = \{\{1,4\},\{2,3\}\}$ with its blocks drawn as arcs,
and its images under $\lambda$ and $\gamma$:
$\lambda(P) = \{\{1,4,5\},\{2,3\}\}$ adds the new
maximum $5$ to the block containing $1$, while
$\gamma(P) = \{\{1,4\},\{2,3\},\{5\}\}$ adjoins $\{5\}$ as a new
singleton block.}
\label{fig:ncp-ops}
\end{figure}

No closed form for $h$ on non-crossing partitions is
known; $h$ is computed via the generic last-return
recursion of Lemma~\ref{lem:h-last-return}.

\subsection{Concrete descriptions of the canonical
  bijections}\label{sec:bijections}

There are $\binom{8}{2} = 28$ canonical bijections
$\varphi_{A \to B}$ between our eight families, each
defined recursively via the abstract framework, so their
effect on a given object is not immediately clear.
For a spanning set of pairs we give concrete, usually
non-recursive descriptions. The seven pairs below connect
all eight families (they form a spanning tree of $K_8$),
so any $\varphi_{A \to B}$ is a composition of bijections
from this set, and these seven descriptions determine all
28. To verify that a concrete description matches the
canonical bijection, it suffices to check
by structural induction the following three identities.

\begin{proposition}
A bijection $f : A \to B$ between two Catalan structures
is the canonical bijection if and only if:
\begin{itemize}
\item $f(e_A) = e_B$,
\item $f(\lambda_A(t)) = \lambda_B(f(t))$, and
\item $f(u \oplus_A v) = f(u) \oplus_B f(v)$.
\end{itemize}
\end{proposition}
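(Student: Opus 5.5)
We prove the two directions separately. The argument is a uniqueness statement for maps defined by structural recursion on the three-form decomposition of Definition~\ref{def:catalan-structure}.

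\emph{($\Rightarrow$)} Suppose $f$ is the canonical bijection $\varphi\colon A\to B$. The first two identities are exactly the first two defining clauses of $\varphi$. The third is also a defining clause, but the definition only applies it to the recursive decomposition. So we will show that $\varphi(u\oplus_A v)=\varphi(u)\oplus_B\varphi(v)$ holds for \emph{all} $u,v\in A$, not only for $u$ indecomposable and $v\neq e_A$.

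\begin{itemize}
\item If $u=e_A$ or $v=e_A$, the identity follows from $\varphi(e_A)=e_B$ together with $e$ being a two-sided identity.
\item Otherwise, write $u=\lambda(s_1)\oplus\cdots\oplus\lambda(s_k)$ as its unique factorization into indecomposables. Then $u\oplus v=\lambda(s_1)\oplus\bigl(\lambda(s_2)\oplus\cdots\oplus\lambda(s_k)\oplus v\bigr)$ is the first-return form.
\item Apply the defining clause once, then induct on $k$ (or on $|u|$). Associativity of $\oplus_B$ then gives the claim.
\end{itemize}

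This bookkeeping about which factorization the defining clause refers to is the only step needing care.

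\emph{($\Leftarrow$)} Suppose $f$ satisfies the three identities. We show $f(t)=\varphi(t)$ by strong induction on $|t|$, splitting by the unique decomposition of $t$.

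\begin{itemize}
\item If $t=e_A$, both sides equal $e_B$.
\item If $t=\lambda_A(s)$, then $f(t)=\lambda_B(f(s))=\lambda_B(\varphi(s))=\varphi(t)$, since $|s|<|t|$.
\item If $t=u\oplus_A v$ with $u$ indecomposable and $v\neq e_A$, then $|u|,|v|<|t|$ because sizes add and nonempty elements have positive size. Hence $f(t)=f(u)\oplus_B f(v)=\varphi(u)\oplus_B\varphi(v)=\varphi(t)$.
\end{itemize}

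So $f=\varphi$. Bijectivity of $f$ is not actually used in this direction: any map satisfying the identities coincides with $\varphi$, and is therefore automatically a bijection by Proposition~\ref{prop:transport}'s construction of $\varphi^{-1}$.

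The main obstacle is the ($\Rightarrow$) extension of the $\oplus$-clause to arbitrary factorizations. Everything else is immediate from the unique decomposition property.
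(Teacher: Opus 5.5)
Your proof is correct and follows the same route as the paper. The paper's entire argument is that unique decomposition into $e$, $\lambda$ and $\oplus$ forces $f=\varphi_{A\to B}$, and your strong induction on $|t|$ in the ($\Leftarrow$) direction is exactly this argument written out. In the ($\Rightarrow$) direction you also extend the $\oplus$-clause from first-return factorizations to arbitrary pairs $u,v$ using associativity; the paper leaves this point implicit by treating the defining equations of $\varphi$ as holding for all $u,v$, and your argument for it is sound.
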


Since decomposition into $e$, $\lambda$ and $\oplus$ is
unique, these identities force
$f = \varphi_{A \to B}$. The verifications are carried out
in Appendix~\ref{sec:bij-verify}.

\paragraph{Triangulations $\to$ plane trees.}
Vertex $1$ of the $(n{+}2)$-gon becomes the root. The
vertices adjacent to $1$ (other than $n{+}2$) become the
children of the root; vertex $n{+}2$ is a dummy that
disappears. The construction recurses into each subpolygon
between consecutive neighbours of $1$. To reverse: label
the non-root vertices of the tree in preorder and recover
the triangulation. See
Figure~\ref{fig:tri-to-tree}.

\begin{figure}[ht]
\centering
\begin{tikzpicture}[
  nd/.style={circle, fill, inner sep=1.5pt},
  scale=0.75
]
% === Left: hexagon triangulation ===
\begin{scope}[shift={(-4, 0)}]
  \node[nd] (v1) at (2, 0) {};
  \node[nd] (v2) at (1, 1.732) {};
  \node[nd] (v3) at (-1, 1.732) {};
  \node[nd] (v4) at (-2, 0) {};
  \node[nd] (v5) at (-1, -1.732) {};
  \node[nd] (v6) at (1, -1.732) {};
  \draw (v1) -- (v2) -- (v3) -- (v4) -- (v5) -- (v6) -- (v1);
  \draw (v1) -- (v3);
  \draw (v3) -- (v5);
  \draw (v1) -- (v5);
  \node[right=2pt, font=\small] at (v1) {$1$};
  \node[above right, font=\small] at (v2) {$2$};
  \node[above left, font=\small] at (v3) {$3$};
  \node[left=2pt, font=\small] at (v4) {$4$};
  \node[below left, font=\small] at (v5) {$5$};
  \node[below right, font=\small] at (v6) {$6$};
\end{scope}
% === Arrow ===
\draw[->, thick] (-0.5, 0) -- (0.5, 0);
\node[above, font=\small] at (0, 0.05) {$f$};
% === Right: plane tree ===
\begin{scope}[shift={(3.3, 1.3)}]
  \node[nd] (r) at (0, 0) {};
  \node[nd] (c1) at (-1.4, -1.3) {};
  \node[nd] (c2) at (0, -1.3) {};
  \node[nd] (c3) at (1.4, -1.3) {};
  \node[nd] (g)  at (0, -2.6) {};
  \draw (r) -- (c1);
  \draw (r) -- (c2);
  \draw (r) -- (c3);
  \draw (c2) -- (g);
\end{scope}
\end{tikzpicture}
\caption{The triangulations-to-trees bijection. Vertex
$1$ becomes the root, and its neighbours $2, 3, 5$
(excluding the dummy vertex $n{+}2 = 6$) become its three
children. The three subpolygons recurse into the three
subtrees: each of the edges $(2,3)$ and $(5,6)$ yields a
leaf, while the triangle $(3,4,5)$ yields a subtree with
a single edge.}
\label{fig:tri-to-tree}
\end{figure}

\paragraph{Plane trees $\to$ Dyck paths.}
Traverse the tree by depth-first search with children
visited left to right; each step down an edge produces a
$u$-step, each step back up produces a $d$-step.

\paragraph{Dyck paths $\to$ $2 \times n$ SYT.}
The $i$-th step of the Dyck path is $u$ if $i$ lies in
the top row of the tableau, and $d$ if $i$ lies in the
bottom row.

\paragraph{Plane trees $\to$ binary trees.}
This is the left-child/right-sibling encoding. If $T$
is a single node, $f(T) = \varnothing$. Otherwise, let
$\eta$ be the leftmost edge of $T$, let $T_1$ be the
subtree below $\eta$, and let $T_2$ be the plane tree
obtained from $T$ by deleting $T_1$ (so $T_2$ shares
its root with $T$). Then $f(T)$ is the binary tree with
root corresponding to $\eta$, left subtree $f(T_1)$,
and right subtree $f(T_2)$. See
Figure~\ref{fig:lc-rs}.

\begin{figure}[ht]
\centering
\begin{tikzpicture}[
  nd/.style={circle, fill, inner sep=1.5pt},
  scale=0.85
]
% === Left: plane tree ===
\begin{scope}[shift={(-3.2, 0)}]
  \node[nd] (r)  at (0, 0) {};
  \node[nd] (a)  at (-1.4, -1.2) {};
  \node[nd] (b)  at (0, -1.2) {};
  \node[nd] (c)  at (1.4, -1.2) {};
  \node[nd] (a1) at (-1.4, -2.4) {};
  \draw (r)  -- node[midway, above left, font=\scriptsize] {$\eta_1$} (a);
  \draw (r)  -- node[midway, below right=2pt, font=\scriptsize] {$\eta_3$} (b);
  \draw (r)  -- node[midway, above right, font=\scriptsize] {$\eta_4$} (c);
  \draw (a)  -- node[midway, left=2pt, font=\scriptsize] {$\eta_2$} (a1);
  \node[font=\small] at (0, -3.2) {$T$};
\end{scope}
% === Arrow ===
\draw[->, thick] (-0.6, -1) -- (0.6, -1);
\node[above, font=\small] at (0, -1) {$f$};
% === Right: binary tree ===
\begin{scope}[shift={(3.2, 0)}]
  \node[nd, label={right:$\eta_1$}] (B1) at (0, 0) {};
  \node[nd, label={left:$\eta_2$}]  (B2) at (-1, -1.2) {};
  \node[nd, label={right:$\eta_3$}] (B3) at (1, -1.2) {};
  \node[nd, label={right:$\eta_4$}] (B4) at (2, -2.4) {};
  \draw (B1) -- (B2);
  \draw (B1) -- (B3);
  \draw (B3) -- (B4);
  \node[font=\small] at (0, -3.2) {$f(T)$};
\end{scope}
\end{tikzpicture}
\caption{The left-child/right-sibling encoding. Each
edge $\eta_i$ of the plane tree~$T$ becomes a node of
the binary tree~$f(T)$: its left child in $f(T)$ is
the leftmost child edge of $\eta_i$'s lower endpoint
(if any), and its right child in $f(T)$ is the next
sibling edge of $\eta_i$ (if any). Here $T_1$ is the
subtree below $\eta_1$ (contributing $\eta_2$) and
$T_2$ is the rest of~$T$ sharing its root (contributing
$\eta_3, \eta_4$).}
\label{fig:lc-rs}
\end{figure}

\paragraph{Plane trees $\to$ non-crossing
  partitions.}
Label the edges of $T$ in postorder with labels
$1, 2, \ldots, n$ (equivalently, label them
$n, n{-}1, \ldots, 1$ in the first-descent order of a
depth-first search with children visited right to left;
see Figure~\ref{fig:preorder-postorder}).
The labels along the leftmost path form the first block
of the partition. Remove these edges and recurse on the
resulting forest.

\paragraph{Plane trees $\to$ $231$-avoiding
  permutations.}
Label the edges of $T$ in postorder. Then read the labels
in preorder. The resulting sequence is a $231$-avoiding
permutation.

\paragraph{Plane trees $\to$ $321$-avoiding
  permutations.}
Label the edges of $T$ in postorder. Traverse $T$ by
depth-first search (children left to right). Going down
an edge not incident to a leaf: reserve an empty slot.
Going down an edge incident to a leaf: place its label
immediately. Going up a non-leaf edge: place its label in
the leftmost available slot. The result is a
$321$-avoiding permutation.

\begin{figure}[ht]
\centering
\begin{tikzpicture}[
  nd/.style={circle, fill, inner sep=1.5pt}
]
% preorder labeling
\node[nd] (r) at (3,0) {};
\node[nd] (a) at (1,-1.3) {};
\node[nd] (b) at (3,-1.3) {};
\node[nd] (c) at (5,-1.3) {};
\node[nd] (d) at (0.3,-2.6) {};
\node[nd] (e) at (1.7,-2.6) {};
\node[nd] (f) at (4.3,-2.6) {};
\node[nd] (g) at (5.7,-2.6) {};
\node[nd] (h) at (0.3,-3.9) {};
\draw (r) -- node[midway, above left, font=\small] {$1$} (a);
\draw (a) -- node[midway, left=3pt, font=\small] {$2$} (d);
\draw (d) -- node[midway, left, font=\small] {$3$} (h);
\draw (a) -- node[midway, right=3pt, font=\small] {$4$} (e);
\draw (r) -- node[midway, right, font=\small] {$5$} (b);
\draw (r) -- node[midway, above right, font=\small] {$6$} (c);
\draw (c) -- node[midway, left=3pt, font=\small] {$7$} (f);
\draw (c) -- node[midway, right=3pt, font=\small] {$8$} (g);
\end{tikzpicture}
\hspace{10mm}
\begin{tikzpicture}[
  nd/.style={circle, fill, inner sep=1.5pt}
]
% postorder labeling
\node[nd] (r) at (3,0) {};
\node[nd] (a) at (1,-1.3) {};
\node[nd] (b) at (3,-1.3) {};
\node[nd] (c) at (5,-1.3) {};
\node[nd] (d) at (0.3,-2.6) {};
\node[nd] (e) at (1.7,-2.6) {};
\node[nd] (f) at (4.3,-2.6) {};
\node[nd] (g) at (5.7,-2.6) {};
\node[nd] (h) at (0.3,-3.9) {};
\draw (r) -- node[midway, above left, font=\small] {$4$} (a);
\draw (a) -- node[midway, left=3pt, font=\small] {$2$} (d);
\draw (d) -- node[midway, left, font=\small] {$1$} (h);
\draw (a) -- node[midway, right=3pt, font=\small] {$3$} (e);
\draw (r) -- node[midway, right, font=\small] {$5$} (b);
\draw (r) -- node[midway, above right, font=\small] {$8$} (c);
\draw (c) -- node[midway, left=3pt, font=\small] {$6$} (f);
\draw (c) -- node[midway, right=3pt, font=\small] {$7$} (g);
\end{tikzpicture}
\caption{Edge labelings of a plane tree with eight edges.
Left: Preorder (labels assigned in the first-descent order
of a depth-first search with children visited left to right).
Right: Postorder (equivalently, labels assigned in
descending order $8, 7, \ldots, 1$ in the first-descent order
of a depth-first search with children visited right to left).
The constructions for non-crossing partitions, $\Av(231)$,
and $\Av(321)$ in Section~\ref{sec:bijections} use the postorder
labeling.}
\label{fig:preorder-postorder}
\end{figure}

\begin{remark}
Most of these concrete descriptions are classical
bijections: The DFS encoding of trees as Dyck paths,
the step/row correspondence between Dyck paths and SYT,
and the left-child/right-sibling encoding of plane trees
as binary trees are all standard. Non-crossing partitions
and $231$-avoiders appear as items (pp) and (ff) (via
reverse--complement) of Stanley's
list~\cite[Exercise~6.19]{Stanley99}. For non-crossing
partitions, a tree bijection via preorder vertex labeling
is given in the solution to item~159
of~\cite{Stanley15}; this is the Dershowitz--Zaks
construction~\cite{DershowitzZaks86}, and, while similar,
it differs from the postorder edge labeling used above.
The slot-filling procedure for $321$-avoiders appears
to be less standard.
\end{remark}

\section{Statistics and transport}\label{sec:statistics}

With the concrete families in hand, we turn to the
equidistribution theorem and the dictionary that
translates it into per-family statements.

\subsection{Canonical statistics and
  equidistribution}\label{sec:equidistribution}

The four \emph{canonical statistics} $\leaves$, $\subtrees$, $\internal$ and
$\rpath$ are defined by the following recursive equations on the free Catalan
structure $(C, e, \lambda, \oplus)$, where each $u \oplus v$ clause has both
summands nonempty:
\[
\begin{aligned}
  \leaves(e) &= 1 \\
  \leaves(\lambda(t)) &= \leaves(t) \\
  \leaves(u \oplus v)
    &= \leaves(u) + \leaves(v) \\[4pt]
  \internal(e) &= 0 \\
  \internal(\lambda(t))
    &= 1 + \internal(t) \\
  \internal(u \oplus v)
    &= \internal(u) + \internal(v) - 1
\end{aligned}
\qquad
\begin{aligned}
  \subtrees(e) &= 0 \\
  \subtrees(\lambda(t)) &= 1 \\
  \subtrees(u \oplus v)
    &= \subtrees(u) + \subtrees(v) \\[4pt]
  \rpath(e) &= 0 \\
  \rpath(\lambda(t))
    &= 1 + \rpath(t) \\
  \rpath(u \oplus v) &= \rpath(v).
\end{aligned}
\]

On plane trees, these have concrete descriptions:
$\leaves$ is the number of leaves,
$\internal$ the number of non-leaf vertices
($= |t| + 1 - \leaves(t)$ for $|t| \geq 1$),
$\subtrees$ the number of subtrees of the root
(that is, the number of children), and
$\rpath$ the length of the rightmost path.

\begin{theorem}[Equidistribution]\label{thm:equidistribution}
For all $t$ with $|t| \geq 1$,
\[
  \rpath(h(t)) = \subtrees(t)
  \quad\text{and}\quad
  \leaves(h(t)) = \internal(t).
\]
In particular, for $n \geq 1$, $(\subtrees, \leaves)$ and
$(\rpath, \internal)$ have the same joint distribution
on~$C_n$.
\end{theorem}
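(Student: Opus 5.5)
We prove both identities simultaneously by strong induction on $|t|$, using the last-return form of Lemma~\ref{lem:h-last-return}. Because that lemma expresses $h$ through $\lambda$ and $\oplus$ rather than through $\gamma$ and $\ominus$, the canonical statistics can be evaluated directly on $h(t)$ from their defining recursions. Write a nonempty $t$ as $t=u\oplus\lambda(v)$, so that $h(t)=h(v)\oplus\lambda(h(u))$. The recursions involve the clause $u\oplus v$ only when both summands are nonempty, so we split into the cases $u=e$ and $u\neq e$. Since $h$ preserves size and $h(e)=e$, $h(u)$ is empty exactly when $u$ is, and $h(v)$ is empty exactly when $v$ is.

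\emph{Case $u=e$.} Here $t=\lambda(v)$ and $h(t)=\gamma(h(v))=h(v)\oplus\lambda(e)$. We have $\subtrees(t)=1$. If $v=e$, then $h(t)=\lambda(e)$ and $\rpath(\lambda(e))=1$. Otherwise $\rpath(h(t))=\rpath(\lambda(e))=1$. For the second identity, $\internal(t)=1+\internal(v)$. If $v\neq e$, then $\leaves(h(t))=\leaves(h(v))+\leaves(\lambda(e))=\internal(v)+1$ by induction. If $v=e$, then $\leaves(\lambda(e))=1=\internal(\lambda(e))$.

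\emph{Case $u\neq e$.} Here $\subtrees(t)=\subtrees(u)+1$ and $\internal(t)=\internal(u)+\internal(\lambda(v))-1=\internal(u)+\internal(v)$. On the other side, $\rpath(h(t))$ is $\rpath(\lambda(h(u)))=1+\rpath(h(u))$ whether or not $h(v)$ is empty, and this equals $1+\subtrees(u)$ by induction, since $|u|\ge1$. For leaves, $\leaves(\lambda(h(u)))=\leaves(h(u))=\internal(u)$ by induction. If $v\ne e$, then $\leaves(h(t))=\leaves(h(v))+\internal(u)=\internal(v)+\internal(u)$. If $v=e$, then $h(t)=\lambda(h(u))$ and $\leaves(h(t))=\internal(u)=\internal(u)+\internal(e)$. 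Both identities therefore hold.

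The equidistribution statement follows at once. Since $h$ is a size-preserving involution (Theorem~\ref{thm:h-involution}), it is a bijection on $C_n$, and it carries the pair $(\subtrees,\internal)$ to $(\rpath,\leaves)$. Hence $(\subtrees,\internal)$ and $(\rpath,\leaves)$ are equidistributed on $C_n$. Replacing $t$ by $h(t)$ gives $\subtrees(h(t))=\rpath(t)$ and $\internal(h(t))=\leaves(t)$, which yields the pairing stated in the theorem. The argument is entirely elementary; the only point requiring care is the empty-summand bookkeeping, in particular $\internal(e)=0$ against $\leaves(e)=1$, which is why the theorem requires $|t|\ge1$.
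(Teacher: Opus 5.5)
Your proof is correct, but it takes a different route from the paper's. The paper inducts on the first-return form $t=\lambda(u)\oplus v$, where $h(t)=h(v)\ominus\gamma(h(u))$ is expressed through the secondary composition. It therefore needs two auxiliary identities, $\rpath(a\ominus b)=\rpath(a)+\rpath(b)$ and $\leaves(a\ominus b)+1=\leaves(a)+\leaves(b)$, each requiring its own induction on~$a$. You instead use the last-return form of Lemma~\ref{lem:h-last-return}, in which $h(t)=h(v)\oplus\lambda(h(u))$ involves only $\lambda$ and $\oplus$. The defining recursions of the four statistics then apply directly, and nothing about $\ominus$ is needed beyond Lemma~\ref{lem:ominus-snoc}, on which the last-return lemma rests. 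Your argument is therefore shorter and fully self-contained.

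What the paper's route gives in return is a structural explanation. Its auxiliary identities, together with $\rpath(\gamma(s))=1$ and $\leaves(\gamma(s))=\leaves(s)+1$ for $s\neq e$, say that on nonempty elements $\rpath$ and $\leaves$ satisfy, relative to $(\gamma,\ominus)$, the recursions that $\subtrees$ and $\internal$ satisfy relative to $(\lambda,\oplus)$. That is why $h$, which carries one structure to the other, exchanges them.

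Two minor remarks. First, you apply the $\oplus$-clauses to splits such as $u\oplus\lambda(v)$ and $h(v)\oplus\lambda(h(u))$ whose left part may be decomposable. This is legitimate because the clauses are consistent over all factorizations: $\subtrees$, $\leaves$ and $\internal-1$ are additive over the indecomposable summands, and $\rpath$ depends only on the last one. That deserves a sentence. Second, in your final paragraph the theorem's pairing comes from combining $\rpath(h(t))=\subtrees(t)$ with $\internal(h(t))=\leaves(t)$, which gives $(\rpath,\internal)\circ h=(\subtrees,\leaves)$. The two swapped identities taken together only restate the pairing $(\subtrees,\internal)\leftrightarrow(\rpath,\leaves)$ you had already obtained.
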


The equidistribution can be seen as a special case of Theorem~6 in
\cite{CKS13}. We give a self-contained proof.

\begin{proof}
It suffices to show $\rpath(h(t)) = \subtrees(t)$ for
all $t$, and $\leaves(h(t)) = \internal(t)$ for
$|t| \geq 1$. The identities $\subtrees(h(t)) = \rpath(t)$ and
$\internal(h(t)) = \leaves(t)$ then follow from
$h^2 = \operatorname{id}$. We use two auxiliary
identities, each proved by induction on $a$ using the
defining recurrence of $\ominus$:
\[
  \rpath(a \ominus b) = \rpath(a) + \rpath(b),
  \qquad
  \leaves(a \ominus b) + 1 = \leaves(a) + \leaves(b).
\]
We proceed by induction on $|t|$. The base cases are immediate, since $h$ fixes
both $e$ and $\lambda(e)$: the identity $\rpath(h(t)) = \subtrees(t)$ holds at
$t = e$ and at $t = \lambda(e)$, while $\leaves(h(t)) = \internal(t)$, asserted
only for $|t| \geq 1$, holds at $t = \lambda(e)$.

For the inductive step ($|t| \geq 2$), write $t$ in
first-return form.

\emph{Case 1: $t = \lambda(u)$ with $|u| \geq 1$.}
Since $h(u) \neq e$, we have
$h(t) = \gamma(h(u)) = h(u) \oplus \lambda(e)$, so
\begin{align*}
  \rpath(h(t))
    &= \rpath(\lambda(e)) = 1 = \subtrees(t), \\
  \leaves(h(t))
    &= \leaves(h(u)) + 1
    = \internal(u) + 1 = \internal(t),
\end{align*}
using the induction hypothesis on $u$ in the second
line.

\emph{Case 2: $t = \lambda(u) \oplus v$ with $v \neq e$.}
By the definition of $h$,
\[
  h(t) = h(v) \ominus \gamma(h(u))
       = h(v) \ominus h(\lambda(u)).
\]
Applying the auxiliary identities together with the
induction hypothesis on $v$ and on $\lambda(u)$ (both
of size ${<}\,|t|$ and ${\geq}\,1$),
\begin{align*}
  \rpath(h(t))
    &= \rpath(h(v)) + \rpath(h(\lambda(u))) \\
    &= \subtrees(v) + \subtrees(\lambda(u))
    = \subtrees(t), \\
  \leaves(h(t)) + 1
    &= \leaves(h(v)) + \leaves(h(\lambda(u))) \\
    &= \internal(v) + \internal(\lambda(u))
    = \internal(t) + 1,
\end{align*}
where the last equality uses
$\internal(u' \oplus v') = \internal(u') + \internal(v') - 1$
for nonempty $u', v'$.
\end{proof}

\subsection{Translating canonical statistics to native
  ones}\label{sec:dictionary}

With the equidistribution established at the abstract
level, it remains to identify the canonical statistics with
classical, natively defined statistics on each concrete
family. The full dictionary is as follows (the native statistics
are defined after the table):

\begin{center}
\renewcommand{\arraystretch}{1.3}
\begin{tabular}{llllllll}
Trees & Dyck & Tri & $\Av(231)$ & $\Av(321)$ &
  Binary & SYT & NCP \\
\hline
$\subtrees$ & $\comp$ & $\comp$ & $\comp$ & $\comp$ & $\comp$ & $\comp$
  & $\comp$ \\
$\rpath$ & $\rcomp$ & $\rcomp$ & $\rmax$ & $\rcomp$ & $\rcomp$
  & $\rcomp$ & $\rcomp$ \\
$\leaves$ & $\peak$ & $\tr$ & $\rmin$ & $\rmin$ & $\redge$
  & $\noncons$ & $\blocks$ \\
$\internal$ & $\nonpeak$ & $\nontr$ & $\nonrmin$ & $\nonrmin$
  & $\ledge$ & $\cons$ & $\nonmin$ \\
\end{tabular}
\end{center}

\emph{Convention:} Some statistics are shifted from
their naive combinatorial definition. For example, on
binary trees $\redge$ is one plus the number of right
edges, where a right edge is an edge from a node to a
nonempty right child. The
base-case caveat (Remark~\ref{rem:base-case}) applies to
the last two rows of the dictionary. The complement
statistic $n + 1 - \leaves$ is not independent but
appears on the right-hand side of the equidistribution,
so its native descriptions are included for each family.

\paragraph{Dyck paths.}
\begin{itemize}
\item comp = number of returns to the $x$-axis
\item rcomp = length of the terminal $d$-run
\item peak = number of occurrences of $ud$
\item nonpeak = $1$ + number of occurrences of $uu$
\end{itemize}

\paragraph{Triangulations of the $(n{+}2)$-gon.}
\begin{itemize}
\item comp = number of edges incident to $1$, minus $1$
\item rcomp = number of edges incident to $n{+}2$, minus $1$
\item tr = number of $i \in \{2, \ldots, n{+}1\}$ such
  that some $j < i$ is adjacent to both $i$ and $i{+}1$
\item nontr = $n + 1 - \tr$
\end{itemize}
Note that tr counts the triangles whose two largest-labeled vertices
are joined by a boundary edge.

\paragraph{$231$-avoiders.}
\begin{itemize}
\item comp = number of components (each begins
  with its max)
\item rmax = number of right-to-left maxima
\item rmin = number of right-to-left minima
\item nonrmin = $1$ + number of non-right-to-left-minima
\end{itemize}

\paragraph{$321$-avoiders.}
\begin{itemize}
\item comp = number of components
\item rcomp = $n - \pi(n) + 1$, where $\pi(n)$ is the last entry of $\pi$
\item rmin = number of right-to-left minima
\item nonrmin = $1$ + number of non-right-to-left-minima
\end{itemize}

\paragraph{Binary trees.}
\begin{itemize}
\item comp = number of nodes on the right spine
\item rcomp = number of right-spine peelings
  (see Section~\ref{sec:dict-binary})
\item redge = $1$ + number of right edges
\item ledge = $n + 1 - \redge$
\end{itemize}

\paragraph{$2 \times n$ SYT.}
\begin{itemize}
\item comp = number of components
\item rcomp = length of maximal consecutive segment at
  right end of bottom row
\item cons = $1$ + number of top-row entries $i$ such that
  $i{+}1$ is also in the top row
\item noncons = number of top-row entries $i$ such that
  $i{+}1$ is in the bottom row
\end{itemize}

\paragraph{Non-crossing partitions.}
\begin{itemize}
\item comp = number of components
\item rcomp = number of right-to-left maxima in flattened
  form
\item blocks = number of blocks
\item nonmin = $n + 1 - \blocks$
\end{itemize}

\subsection{Verifying the dictionary}\label{sec:transport}

Each entry in the dictionary of Section~\ref{sec:dictionary}
is a claim, namely that the
transported canonical statistic coincides with a
classical, natively defined statistic on the target family. For
instance, the entry $\text{leaves} \mapsto \text{peak}$
for Dyck paths asserts that
$\leaves(\varphi^{-1}(w)) = \peak(w)$ for every Dyck
path~$w$. These
identifications are where the main combinatorial work
lies; without them, the per-family equidistribution
statements in Section~\ref{sec:corollaries} are only formal rewritings.

The verification strategy is uniform: show that the
native statistic satisfies the same recursion (under $e$,
$\lambda$, $\oplus$) as the canonical statistic. The full
verification of every dictionary entry is carried out in
Appendix~\ref{sec:dict-verify}. As a model, we prove the
$\text{leaves} \mapsto \text{peak}$ entry in detail.

The canonical bijection
$\varphi : \text{Trees} \to \text{Dyck}$ is given by
\begin{align*}
  \varphi(e) &= \epsilon \\
  \varphi(\lambda(t)) &= u \varphi(t) d \\
  \varphi(s \oplus t)
    &= \varphi(s) \varphi(t).
\end{align*}
We must show that $\peak$ satisfies the same recursion
as $\leaves$ through this bijection, taking the two
operations in turn.

\emph{Concatenation.} If both $w_1$ and $w_2$ are
nonempty then $w_1$ ends with $d$ and $w_2$ starts
with $u$, so pairing the last letter of $w_1$ with
the first letter of $w_2$ yields $du$, not a peak;
if either word is empty the claim is trivial. Therefore
\[
  \peak(w_1 w_2)
    = \peak(w_1) + \peak(w_2),
\]
matching
$\leaves(s \oplus t) = \leaves(s) + \leaves(t)$.

\emph{Lifting.} When $w \neq \epsilon$, $w$ starts
with $u$ and ends with $d$, so
$\peak(uwd) = \peak(w)$, matching
$\leaves(\lambda(t)) = \leaves(t)$.
When $w = \epsilon$,
$\peak(ud) = 1 = \leaves(\lambda(e))$, completing the
induction for $n \geq 1$.

\begin{remark}[Base-case caveat]\label{rem:base-case}
All dictionary identifications for $\leaves$ and
$\internal$ hold for $n \geq 1$. At $n = 0$ the
native statistics may differ from the canonical values
$\leaves(e) = 1$ and $\internal(e) = 0$. For instance,
$\peak(\epsilon) = 0 \neq 1 = \leaves(e)$. This
discrepancy does not affect the equidistribution theorem,
which concerns $n \geq 1$.
\end{remark}

\subsection{Per-family
  corollaries}\label{sec:corollaries}

Combining the equidistribution theorem
(Theorem~\ref{thm:equidistribution}) with the dictionary
identifications from Section~\ref{sec:dictionary} gives the
following per-family
statements, each holding for $n \geq 1$. The involution
$h$ sends:
\begin{itemize}
\item $(\comp, \peak)$ to $(\rcomp, \nonpeak)$ on
  Dyck paths,
\item $(\comp, \tr)$ to $(\rcomp, \nontr)$ on
  triangulations,
\item $(\comp, \rmin)$ to $(\rmax, \nonrmin)$ on
  $\Av(231)$,
\item $(\comp, \rmin)$ to $(\rcomp, \nonrmin)$ on
  $\Av(321)$,
\item $(\comp, \redge)$ to $(\rcomp, \ledge)$ on
  binary trees,
\item $(\comp, \noncons)$ to $(\rcomp, \cons)$ on
  $2 \times n$ SYT, and
\item $(\comp, \blocks)$ to $(\rcomp, \nonmin)$ on
  non-crossing partitions.
\end{itemize}

The equidistribution of the individual statistics (for example, that
peaks on Dyck paths have the Narayana distribution~\cite{Deutsch99,Narayana55})
is classical.  What the present framework provides is the \emph{joint}
equidistribution via an explicit involution.  In \cite{CKS13} this joint result
is established on $\beta(1,0)$-trees, but the per-family statements
above, which require the dictionary identifications of Section~\ref{sec:dictionary}
to translate the canonical statistics into native ones, are new.

\subsection{Generating functions}\label{sec:genfun}

The four canonical statistics admit a joint
generating function, which we now derive.
Define
\[
  F = F(t,u) = \sum_{T \in C} t^{|T|}\,
    u^{\leaves(T)}.
\]
The empty tree contributes $u$. Every nonempty tree
is either indecomposable, $T = \lambda(A)$, or decomposable,
$T = \lambda(A) \oplus B$ with $B \neq e$.
In the indecomposable case $\leaves(T) = \leaves(A)$ and
$|T| = 1 + |A|$, so these trees contribute $tF$.
In the decomposable case $\leaves(T) = \leaves(A) + \leaves(B)$
and $|T| = 1 + |A| + |B|$, and the sums over $A$ (all
trees) and $B$ (nonempty trees) factor, contributing
$tF(F - u)$. Therefore
$F - u = tF + tF(F - u)$, that is,\
\[
  tF^2 + (t - tu - 1)F + u = 0.
\]
Writing $\Delta = (1-t+tu)^2 - 4tu$ for the
discriminant,
\[
  F(t,u)
    = \frac{1 - t + tu - \sqrt{\Delta}}{2t}.
\]
For $n \geq 1$ and $1 \leq k \leq n$, the coefficient
to $t^n\, u^k$ in $F$ is the Narayana number $N(n,k)$.
Specializing $u = 1$ gives $tF^2 - F + 1 = 0$, and hence
\[
  F(t,1) = \frac{1 - \sqrt{1 - 4t}}{2t}
    = \sum_{n \geq 0} |C_n|\, t^n,
\]
the Catalan generating function.

Introducing additional variables $p$ for $\subtrees$ and $q$
for $\rpath$, define
\[
  G = G(t,p,q,u) = \sum_{T \in C} t^{|T|}\,
    p^{\subtrees(T)}\, q^{\rpath(T)}\,
    u^{\leaves(T)}.
\]

\begin{theorem}[Generating function]\label{thm:genfun}
With $F = F(t,u)$ as above,
\[
  G(t,p,q,u) = u +
    \frac{tpqu\,(1-tF)}{(1-tpF)(1-tF-tq)}.
\]
Equivalently, with
$\Delta = (1-t+tu)^2 - 4tu$,
\[
  G(t,p,q,u) = u +
    \frac{tpqu\,(1+t-tu+\sqrt{\Delta})}
    {2(1-tq-tpu)
      + (1-t+tu-\sqrt{\Delta})(tp(u+q-1)-1)}.
\]
\end{theorem}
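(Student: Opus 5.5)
We derive a functional equation for $G$ from the factorization of each nonempty element as $\lambda(s_1)\oplus\cdots\oplus\lambda(s_k)$, $k\ge 1$, and then solve it using the quadratic for $F$. We track the four statistics over this factorization.
\begin{itemize}
\item $\subtrees = k$, since each $\lambda$-summand contributes $1$.
\item $\leaves = \sum_i \leaves(s_i)$, by the additivity of $\leaves$ over nonempty summands.
\item $\rpath = \rpath(\lambda(s_k)) = 1+\rpath(s_k)$, by repeated use of $\rpath(u\oplus v)=\rpath(v)$.
\end{itemize}
So $p$ marks the number of factors, $u$ is multiplicative over the factors, and $q$ depends only on the last factor. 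Summing over $s_1,\dots,s_{k-1}$ freely and over the last factor $s_k$ with $q$ retained gives
\[
  G = u + \sum_{k\ge1} (tpF)^{k-1}\, tpq\, H
    = u + \frac{tpq\,H}{1-tpF},
  \qquad H(t,q,u)=\sum_{s\in C} t^{|s|}q^{\rpath(s)}u^{\leaves(s)} = G(t,1,q,u).
\]

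Next we determine $H$ by applying the same decomposition with $p=1$. This yields $H = u + tqH/(1-tF)$, hence
\[
  H = \frac{u(1-tF)}{1-tF-tq}.
\]
As a check, setting $q=1$ gives $F = u(1-tF)/(1-2tF)$, which is equivalent to $tF^2+(t-tu-1)F+u=0$ and so agrees with the quadratic already derived. Substituting $H$ into the expression for $G$ gives the first claimed formula,
\[
  G = u+\frac{tpqu(1-tF)}{(1-tpF)(1-tF-tq)}.
\]

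For the second form we substitute $tF = \tfrac12\bigl(1-t+tu-\sqrt\Delta\bigr)$. This gives $1-tF=\tfrac12(1+t-tu+\sqrt\Delta)$, which is the numerator factor. It remains to show that $2(1-tpF)(1-tF-tq)$ equals the stated denominator multiplied out appropriately. Expanding, we get $(1-tpF)(1-tF-tq) = 1-tq -tF(1+p-tpq) + tp(tF)^2/t$. We then eliminate $(tF)^2$ via the quadratic $t(tF)^2 = t\bigl((1-t+tu)tF - tu\bigr)$, i.e.\ $(tF)^2=(1-t+tu)tF-tu$. This turns the product into something linear in $tF$, namely
\[
  1-tq-tpu + tF\bigl(tp(u+q-1)-1\bigr).
\]
Doubling and substituting $2tF = 1-t+tu-\sqrt\Delta$ gives exactly the stated denominator, since the prefactor $\tfrac12$ in the numerator cancels against the $2$ in the denominator.

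The main obstacle is this algebraic bookkeeping: getting the linear-in-$tF$ reduction right and the sign pattern $tp(u+q-1)-1$. Before committing, I would verify the final formula against small coefficients, for example $n=1,2$, where $G = u + tpqu + t^2(pq^2u + p^2qu^2) + \cdots$.
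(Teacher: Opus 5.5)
Your proof is correct and follows the paper's route. The paper uses the two-case first-return decomposition $\lambda(A)\oplus B$ to get $G-u = tpq\,G(t,1,q,u) + tpF(G-u)$, which is your geometric series in closed form; the specialization $p=1$ and the reduction of the denominator to a linear expression in $tF$ via the quadratic are also the same. One small slip in your sanity check: at $q=1$ the formula reads $F = u(1-tF)/(1-tF-t)$, not $u(1-tF)/(1-2tF)$, and only the former is equivalent to $tF^2+(t-tu-1)F+u=0$.
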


\begin{proof}
The empty tree contributes $u$ to $G$. For each
nonempty tree $T$, apply its first-return decomposition.

\emph{Indecomposable case} ($T = \lambda(A)$):
$\subtrees(T) = 1$,
$\rpath(T) = 1 + \rpath(A)$,
$\leaves(T) = \leaves(A)$, $|T| = 1 + |A|$.
Summing over $A$ gives $tpq\, G(t,1,q,u)$, since the
sum tracks $\rpath$ and $\leaves$ but not $\subtrees$.

\emph{Decomposable case}
($T = \lambda(A) \oplus B$, $B \neq e$):
$\subtrees(T) = 1 + \subtrees(B)$,
$\rpath(T) = \rpath(B)$,
$\leaves(T) = \leaves(A) + \leaves(B)$,
$|T| = 1 + |A| + |B|$.
The sum over $A$ contributes $F$ (only $t$ and $u$
are involved), the sum over nonempty $B$ contributes
$G - u$, the extra $t$ from $|T| = 1 + |A| + |B|$
contributes a factor $t$, and
$p^{1 + \subtrees(B)} = p \cdot p^{\subtrees(B)}$
introduces a factor $p$, giving $tpF(G - u)$ in total.

The two cases together yield
\[
  G - u = tpq\, G(t,1,q,u) + tpF(G - u),
\]
and hence
\[
  G = u + \frac{tpq\, G(t,1,q,u)}{1 - tpF}.
\]
When $p = 1$ this becomes
\[
  G(t,1,q,u) = u +
    \frac{tq\, G(t,1,q,u)}{1 - tF},
\]
so $G(t,1,q,u) = u(1 - tF)/(1 - tF - tq)$.
Feeding this back gives
\[
  G
  =  u + \frac{tpq}{1 - tpF}\,
    \frac{u(1 - tF)}{1 - tF - tq}
  = u + \frac{tpqu\,(1 - tF)}
    {(1 - tpF)(1 - tF - tq)}.
\]
For the explicit form, expand the denominator
$D = (1 - tpF)(1 - tF - tq)$:
\[
  D = 1 - tF - tq - tpF + t^2 pF^2 + t^2 pqF.
\]
The quadratic relation gives
$tF^2 = (1 - t + tu)\,F - u$, so
\[
  D = (1 - tq - tpu) + tF\,(tp(u + q - 1) - 1).
\]
Substituting
$tF = \tfrac{1}{2}(1 - t + tu - \sqrt{\Delta})$ and
$1 - tF = \tfrac{1}{2}(1 + t - tu + \sqrt{\Delta})$
yields the second form.
\end{proof}

Since the canonical bijections preserve all four
statistics, the generating function $G$ is the same for
every concrete Catalan family.

\section{Donaghey's map $M$ and iterated secondary
  structures}\label{sec:donaghey-section}

The involution $h$ can be factored as a composition of two simpler
involutions. This factorization underlies the connection to Donaghey's
automorphism on plane trees and the period theorem for iterated secondary
structures.

\subsection{The involutions $\rev$ and
  $\corev$}\label{sec:rev-corev}

On binary trees (see Section~\ref{sec:binary}), $\rev$ reverses
the sequence of left subtrees along the right spine,
while $\corev$ swaps left and right subtrees at every
node; both recurse into every subtree.
On the free Catalan structure we define $\rev$ and
$\corev$ by
\begin{align*}
  \rev(e) &= e \\
  \rev(\lambda(u) \oplus v) &= \rev(v) \oplus \lambda(\rev(u)) \\
\shortintertext{and}
  \corev(e) &= e \\
  \corev(\lambda(u) \oplus v) &= \lambda(\corev(v)) \oplus \corev(u).
\end{align*}

Note that setting $v=e$ above shows that $\rev$ commutes with $\lambda$.

\begin{lemma}\label{lem:rev-switch}
For all $u, v \in C$,
\[
  \rev(u \oplus \lambda(v))
    = \lambda(\rev(v)) \oplus \rev(u).
\]
\end{lemma}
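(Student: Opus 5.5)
We argue by induction on $|u|$, using only the defining first-return recursion of $\rev$ together with associativity of $\oplus$. Before starting, we record the consequence of the definition noted just above the lemma: taking $v=e$ in $\rev(\lambda(u)\oplus v)=\rev(v)\oplus\lambda(\rev(u))$ gives
\[
  \rev(\lambda(u))=\lambda(\rev(u)).
\]

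\emph{Base case $u=e$.} Here $u\oplus\lambda(v)=\lambda(v)$. The left side is therefore $\rev(\lambda(v))=\lambda(\rev(v))$. The right side is $\lambda(\rev(v))\oplus\rev(e)=\lambda(\rev(v))\oplus e$, which is the same element.

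\emph{Inductive step $u\neq e$.} Write $u$ in first-return form, $u=\lambda(a)\oplus b$, where $b$ may be empty and $|b|<|u|$. By associativity,
\[
  u\oplus\lambda(v)=\lambda(a)\oplus\bigl(b\oplus\lambda(v)\bigr).
\]
The second summand $b\oplus\lambda(v)$ is nonempty, but the defining clause of $\rev$ applies to any $v$, empty or not. It therefore gives
\[
  \rev(u\oplus\lambda(v))=\rev\bigl(b\oplus\lambda(v)\bigr)\oplus\lambda(\rev(a)).
\]
Applying the induction hypothesis to $b$ turns $\rev(b\oplus\lambda(v))$ into $\lambda(\rev(v))\oplus\rev(b)$. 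Associativity then regroups the result as
\[
  \lambda(\rev(v))\oplus\bigl(\rev(b)\oplus\lambda(\rev(a))\bigr).
\]
The bracketed term is exactly $\rev(\lambda(a)\oplus b)=\rev(u)$ by the definition of $\rev$. This is the claimed identity.

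The only delicate point is checking that the defining clause for $\rev$ may legitimately be applied to $\lambda(a)\oplus(b\oplus\lambda(v))$. This holds because that expression is precisely the unique first-return decomposition of $u\oplus\lambda(v)$. Apart from this, the argument is routine bookkeeping with associativity and the identity element $e$.
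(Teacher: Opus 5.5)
Your proof is correct and is essentially the paper's own argument. You induct on $|u|$, write $u=\lambda(a)\oplus b$, apply the first-return clause of $\rev$ to $\lambda(a)\oplus(b\oplus\lambda(v))$, use the induction hypothesis on $b$, and regroup by associativity to fold back $\rev(u)$; your remark that this regrouped expression is the unique first-return decomposition of $u\oplus\lambda(v)$ spells out a justification the paper leaves implicit.
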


\begin{proof}
By induction on the size of $u$.
If $u = e$ then both sides equal $\lambda(\rev(v))$.
If $u \neq e$, write $u = \lambda(a) \oplus b$. Then
\begin{align*}
  \rev(u \oplus \lambda(v))
    &= \rev(\lambda(a)
       \oplus (b \oplus \lambda(v))) \\
    &= \rev(b \oplus \lambda(v))
       \oplus \lambda(\rev(a)) \\
    &= (\lambda(\rev(v)) \oplus \rev(b))
       \oplus \lambda(\rev(a)) \\
    &= \lambda(\rev(v))
       \oplus (\rev(b)
       \oplus \lambda(\rev(a))) \\
    &= \lambda(\rev(v))
       \oplus \rev(u)
\end{align*}
where the third step uses the induction hypothesis on
$b$, and the last step folds the definition of $\rev$.
\end{proof}

\begin{corollary}
$\rev$ and $\corev$ are involutions.
\end{corollary}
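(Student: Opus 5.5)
Both statements are proved by induction on size, with Lemma~\ref{lem:rev-switch} doing the work for $\rev$. For $\corev$ a direct computation suffices, once we know what $\corev$ does to a last-return decomposition.

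\emph{$\rev$ is an involution.} Clearly $\rev(\rev(e)) = e$. For nonempty $t$, write $t = \lambda(u) \oplus v$. By definition,
\[
  \rev(t) = \rev(v) \oplus \lambda(\rev(u)).
\]
This is in last-return form, so Lemma~\ref{lem:rev-switch} (with $\rev(v)$ in place of $u$ and $\rev(u)$ in place of $v$) gives
\[
  \rev(\rev(t)) = \lambda(\rev(\rev(u))) \oplus \rev(\rev(v)).
\]
Both $u$ and $v$ are strictly smaller than $t$, so the induction hypothesis yields $\lambda(u) \oplus v = t$. The only care needed is the boundary case $v = e$. There $\rev(e) \oplus \lambda(\rev(u)) = \lambda(\rev(u))$, and Lemma~\ref{lem:rev-switch} is stated for all $u$, including $u = e$, so nothing special happens.

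\emph{$\corev$ is an involution.} Let $t = \lambda(u) \oplus v$. Then $\corev(t) = \lambda(\corev(v)) \oplus \corev(u)$. This is again in first-return form, with indecomposable head $\lambda(\corev(v))$ and tail $\corev(u)$; this holds even when $\corev(u) = e$, because the first-return decomposition $\lambda(a) \oplus b$ allows $b = e$. Applying the definition once more gives
\[
  \corev(\corev(t)) = \lambda(\corev(\corev(u))) \oplus \corev(\corev(v)),
\]
and the induction hypothesis on the smaller $u$ and $v$ gives $t$.

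The only point I expect to need attention is a well-definedness worry. The clause $\corev(\lambda(u) \oplus v)$ is stated via the first-return decomposition, in which $v$ may be empty. We must be sure that when $\corev(u) = e$ we are still reading the first-return decomposition, so that the second application of $\corev$ uses head $\corev(v)$ and tail $\corev(u) = e$. Uniqueness of the first-return decomposition guarantees this. Beyond that, the argument is routine bookkeeping.
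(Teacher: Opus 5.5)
Your proof is correct and follows the paper's argument essentially step for step. For $\rev$ you apply Lemma~\ref{lem:rev-switch} to the last-return form $\rev(v) \oplus \lambda(\rev(u))$ and then induct; for $\corev$ you observe that $\lambda(\corev(v)) \oplus \corev(u)$ is already in first-return form and apply the definition twice. (One small inconsistency: your opening says the $\corev$ case needs what $\corev$ does to a last-return decomposition, but your argument, like the paper's, only uses the first-return form.)
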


\begin{proof}
  We treat $\rev$ first. The base case $\rev(\rev(e)) = e$ is immediate. For the
  inductive step, write $t = \lambda(u) \oplus v$ in first-return form, where
  $v$ may be $e$. Then
  $\rev(t) = \rev(v) \oplus \lambda(\rev(u))$, and by
  Lemma~\ref{lem:rev-switch},
\[
\rev(\rev(t))
  = \lambda(\rev(\rev(u))) \oplus \rev(\rev(v))
  = \lambda(u) \oplus v = t.
\]
The argument for $\corev$ is similar:
\begin{align*}
\corev(\corev(\lambda(u) \oplus v))
  &= \corev(\lambda(\corev(v)) \oplus \corev(u)) \\
  &= \lambda(\corev(\corev(u))) \oplus \corev(\corev(v)) \\
  &= \lambda(u) \oplus v. \qedhere
\end{align*}
\end{proof}

\begin{theorem}\label{thm:h-factorization}
$h = \rev \circ \corev \circ \rev$.
\end{theorem}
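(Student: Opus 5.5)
The plan is to conjugate by $\rev$ and check that the conjugated map satisfies the defining recursion of $\corev$. Put $k = \rev \circ h \circ \rev$. Since $\rev$ is an involution (the Corollary following Lemma~\ref{lem:rev-switch}), the statement $h = \rev \circ \corev \circ \rev$ is equivalent to $k = \corev$. It therefore suffices to show
\[
  k(e) = e
  \quad\text{and}\quad
  k(\lambda(u) \oplus v) = \lambda(k(v)) \oplus k(u)
\]
for all $u, v \in C$, where $v = e$ is allowed. The equality $k = \corev$ then follows by induction on size, because every nonempty element has a unique first-return decomposition $\lambda(u) \oplus v$ with $|u|, |v|$ smaller.

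\emph{Base case.} We have $k(e) = \rev(h(\rev(e))) = \rev(h(e)) = e$.

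\emph{Inductive step.} Let $t = \lambda(u) \oplus v$. The computation has three steps.
\begin{itemize}
\item[(i)] By the definition of $\rev$, $\rev(t) = \rev(v) \oplus \lambda(\rev(u))$. This is in last-return form.
\item[(ii)] Apply the last-return characterization of $h$ (Lemma~\ref{lem:h-last-return}) with first argument $\rev(v)$ and second argument $\rev(u)$:
\[
  h(\rev(t)) = h(\rev(u)) \oplus \lambda\bigl(h(\rev(v))\bigr).
\]
\item[(iii)] Apply Lemma~\ref{lem:rev-switch}, $\rev(a \oplus \lambda(b)) = \lambda(\rev(b)) \oplus \rev(a)$, with $a = h(\rev(u))$ and $b = h(\rev(v))$:
\[
  k(t) = \lambda\bigl(\rev(h(\rev(v)))\bigr) \oplus \rev(h(\rev(u))) = \lambda(k(v)) \oplus k(u).
\]
\end{itemize}
This is exactly the recursion defining $\corev$. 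By the induction hypothesis, $k(u) = \corev(u)$ and $k(v) = \corev(v)$, so $k(t) = \corev(t)$.

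Finally, compose with $\rev$ on both sides of $k = \corev$ and use $\rev^2 = \mathrm{id}$ to get $h = \rev \circ \corev \circ \rev$.

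The only point needing care is the degenerate case $v = e$. There $\rev(t) = e \oplus \lambda(\rev(u)) = \lambda(\rev(u))$, and we must check that Lemma~\ref{lem:h-last-return} and Lemma~\ref{lem:rev-switch} still apply. Both are stated for all $u, v \in C$ with no nonemptiness restriction, and $\oplus$ has $e$ as a two-sided identity, so the case needs no separate treatment. I expect no real obstacle. The substance lies in Lemmas~\ref{lem:h-last-return} and~\ref{lem:rev-switch}, which are already established.
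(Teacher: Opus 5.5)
Your proof is correct, and it is essentially the paper's argument run from the other side. The paper shows that $\rev\circ\corev\circ\rev$ satisfies the last-return recursion of $h$ (Lemma~\ref{lem:h-last-return}), using Lemma~\ref{lem:rev-switch} and the definitions of $\rev$ and $\corev$; you instead show that $\rev\circ h\circ\rev$ satisfies the defining recursion of $\corev$, using the same two lemmas and additionally $\rev^2=\mathrm{id}$, which the paper has already established.
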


\begin{proof}
Set $F = \rev \circ \corev \circ \rev$. Then
$F(e) = e$. For $t = u \oplus \lambda(v)$:
\begin{align*}
  F(u \oplus \lambda(v))
    &= \rev(\corev(
         \rev(u \oplus \lambda(v)))) \\
    &= \rev(\corev(
         \lambda(\rev(v))
           \oplus \rev(u)))
      &\quad&[\text{Lemma~\ref{lem:rev-switch}}] \\
    &= \rev(
         \lambda(\corev(\rev(u)))
         \oplus \corev(\rev(v)))
      &&[\corev\text{ def.}] \\
    &= \rev(\corev(\rev(v)))
       \oplus
       \lambda(\rev(\corev(
         \rev(u))))
      &&[\rev\text{ def.}] \\
    &= F(v) \oplus \lambda(F(u)).
\end{align*}
So $F$ satisfies $F(e) = e$ and
$F(u \oplus \lambda(v)) = F(v) \oplus \lambda(F(u))$.
Since the last-return decomposition is unique, these two
equations determine $F$ by structural induction, and $h$
satisfies the same recursion
(Lemma~\ref{lem:h-last-return}), so $F = h$.
\end{proof}

\begin{remark}
Computing $h(t)$ from its defining recursion takes time $O(n^2)$ in the size
$n = |t|$, since each application of $\ominus$ traverses a rightmost path. The
factorization $h = \rev \circ \corev \circ \rev$ does better. Represent the
structure as a binary tree, each node holding pointers to its two subtrees. Then
$\corev$ swaps the two pointers at every node, and $\rev$ reverses the right-spine
list and recurses into every subtree. Each is a single traversal of the tree,
hence $O(n)$, and so is $h$.
\end{remark}

\subsection{Donaghey's map $M$}\label{sec:donaghey}

Since $h = \rev \circ \corev \circ \rev$
(Theorem~\ref{thm:h-factorization}), define
$M = h \circ \rev = \rev \circ \corev$.
This map turns out to \emph{intertwine} the primary and secondary
structures, meaning that $M$ conjugates $\lambda$ into
$\gamma$ and $\oplus$ into~$\ominus$.
Since these properties uniquely determine a
size-preserving map, $M$ must coincide with any other
map that intertwines the two structures, in particular
with Donaghey's automorphism~\cite{Donaghey80}, as we
verify below.

\begin{theorem}\label{thm:M-intertwine}
\begin{align*}
  M(e) &= e \\
  M(\lambda(t)) &= \gamma(M(t)) \\
  M(u \oplus v) &= M(u) \ominus M(v)
\end{align*}
\end{theorem}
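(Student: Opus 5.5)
We plan to derive the three identities from $M = h \circ \rev$, from the definition of $h$ (Definition~\ref{def:h}), and from Lemma~\ref{lem:rev-switch}. We will not unfold $\corev$ at all.

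The first identity, $M(e) = h(\rev(e)) = h(e) = e$, is immediate.

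For the second, note that $\rev$ commutes with $\lambda$; this is the case $v = e$ of the defining clause. Hence
\[
M(\lambda(t)) = h(\lambda(\rev(t))) = \gamma(h(\rev(t))) = \gamma(M(t)),
\]
using the second clause of Definition~\ref{def:h}.

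The third identity is the real content. Our first task is a lemma: $\rev$ is an anti-homomorphism for~$\oplus$, that is,
\[
\rev(u \oplus v) = \rev(v) \oplus \rev(u) \quad\text{for all } u, v \in C.
\]
We prove this by induction on $|u|$. If $u = e$, it is trivial. Otherwise write $u = \lambda(a) \oplus b$ in first-return form. By associativity, $u \oplus v = \lambda(a) \oplus (b \oplus v)$, so the defining clause of $\rev$ gives
\[
\rev(u \oplus v) = \rev(b \oplus v) \oplus \lambda(\rev(a)).
\]
The induction hypothesis on $b$ turns this into $\rev(v) \oplus \rev(b) \oplus \lambda(\rev(a))$. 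Folding the definition back, $\rev(b) \oplus \lambda(\rev(a)) = \rev(u)$. The case $v = e$ needs $\rev(\lambda(a) \oplus b) = \rev(b) \oplus \lambda(\rev(a))$ even when $b = e$; this holds because the defining clause allows $v = e$.

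With the lemma in hand, the clause $h(x \oplus y) = h(y) \ominus h(x)$ of Definition~\ref{def:h} gives
\[
M(u \oplus v) = h(\rev(v) \oplus \rev(u)) = h(\rev(u)) \ominus h(\rev(v)) = M(u) \ominus M(v).
\]

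One point needs care. The clause $h(x \oplus y) = h(y) \ominus h(x)$ is stated for the three-form decomposition, where $x$ is indecomposable, so we must check that it holds for arbitrary $x, y$. We expect this to be the main (mild) obstacle, and we plan to settle it by induction on the number of indecomposable summands of $x$, together with the associativity of $\ominus$ (Proposition~\ref{prop:secondary-catalan}). If $x = x_1 \oplus x'$ with $x_1$ indecomposable, then
\[
h(x \oplus y) = h(x' \oplus y) \ominus h(x_1) = \bigl(h(y) \ominus h(x')\bigr) \ominus h(x_1) = h(y) \ominus \bigl(h(x') \ominus h(x_1)\bigr) = h(y) \ominus h(x).
\]
The cases where $x$ or $y$ equals $e$ follow from the identity laws.

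Alternatively, the corollary $h(u \ominus v) = h(v) \oplus h(u)$ already stated shows that $h$ is an anti-isomorphism from $\ominus$ to $\oplus$, and applying $h$ to both sides transfers this to the general $\oplus$ clause.
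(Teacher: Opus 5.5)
Your proof is correct, but for the third identity it takes a different route from the paper. The paper combines the defining clause of $\rev$ with the last-return form of $h$ (Lemma~\ref{lem:h-last-return}) to get $M(\lambda(a)\oplus b)=M(a)\oplus\lambda(M(b))$, which is equation~\eqref{eq:M-first-return}. It then inducts on $|u|$ and closes the step with Lemma~\ref{lem:ominus-snoc}.

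You instead split the identity into two anti-homomorphism statements. The first is $\rev(u\oplus v)=\rev(v)\oplus\rev(u)$, which generalizes Lemma~\ref{lem:rev-switch} by the same induction. The second is $h(x\oplus y)=h(y)\ominus h(x)$ for arbitrary $x,y$, which you get by induction on the number of summands of $x$ together with associativity of $\ominus$ from Proposition~\ref{prop:secondary-catalan}. Then $M=h\circ\rev$ is order-preserving as a composite of two order-reversing maps.

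Your route shows conceptually why $M$ intertwines the two structures. Your extension of the $\oplus$-clause is also worth having on its own. The paper uses that general form tacitly: in the one-line proof of Lemma~\ref{lem:h-last-return}, where the first argument $u$ of $u\oplus\lambda(v)$ may be decomposable, and again in the corollary $h(u\ominus v)=h(v)\oplus h(u)$.

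The paper's route, for its part, yields \eqref{eq:M-first-return} as a by-product. That equation is reused for the plane-tree recursion of $M$ and in Theorem~\ref{thm:donaghey}. From your identities it follows in one line via Lemma~\ref{lem:ominus-snoc}: $M(\lambda(a)\oplus b)=\gamma(M(a))\ominus M(b)=M(a)\oplus\lambda(M(b))$.

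One caution: your closing alternative is circular. The paper's proof of that corollary applies the $\oplus$-clause with the possibly decomposable first argument $h(v)$, so it already assumes what you want to derive from it. Rely on your induction instead.
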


\begin{proof}
The first equation is immediate. For the second,
since $\rev$ commutes with $\lambda$, we have
\[
  M(\lambda(t))
    = h(\rev(\lambda(t)))
    = h(\lambda(\rev(t)))
    = \gamma(h(\rev(t)))
    = \gamma(M(t)).
\]
For the third, we first note that for any
$u = \lambda(a) \oplus b$,
\begin{equation}\label{eq:M-first-return}
  M(u) = M(a) \oplus \lambda(M(b)).
\end{equation}
Indeed,
\begin{align*}
  M(\lambda(a) \oplus b)
    &= h(\rev(b) \oplus \lambda(\rev(a)))
      &\quad&[\rev\text{ def.}] \\
    &= h(\rev(a)) \oplus \lambda(h(\rev(b)))
      &&[h\text{ last-return}] \\
    &= M(a) \oplus \lambda(M(b)).
\end{align*}
Now we induct on the size of $u$. The base case $u = e$ is
trivial. For the inductive step, write $u = \lambda(a) \oplus b$, so
that $u \oplus v = \lambda(a) \oplus (b \oplus v)$. Then
\begin{align*}
  M(u \oplus v)
    &= M(a) \oplus \lambda(M(b \oplus v))
      &\quad&[\eqref{eq:M-first-return}] \\
    &= M(a) \oplus \lambda(M(b) \ominus M(v))
      &&[\text{ind.\ hyp.}] \\
    &= (M(a) \oplus \lambda(M(b))) \ominus M(v)
      &&[\text{Lemma~\ref{lem:ominus-snoc}}] \\
    &= M(u) \ominus M(v).
      &&[\eqref{eq:M-first-return}]
\end{align*}
This concludes the proof.
\end{proof}

\begin{corollary}
$M^{-1} = \rev \circ M \circ \rev$.  In particular, $M$ is conjugate to its own inverse by
$\rev$.

\end{corollary}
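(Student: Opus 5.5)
The plan is to use only that $h$ and $\rev$ are involutions (Theorem~\ref{thm:h-involution} and the corollary following Lemma~\ref{lem:rev-switch}), together with the definition $M = h \circ \rev$. No structural induction should be needed.

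First, because $h$ and $\rev$ are both involutions, the map $M = h\circ\rev$ is a bijection. Its inverse is obtained by inverting each factor and reversing their order:
\[
  M^{-1} = \rev^{-1}\circ h^{-1} = \rev \circ h .
\]

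Second, we rewrite $\rev\circ M\circ\rev$ by expanding $M$:
\[
  \rev\circ M\circ\rev = \rev\circ h\circ\rev\circ\rev = \rev\circ h ,
\]
where the last step uses $\rev\circ\rev = \operatorname{id}$.

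Comparing the two displays gives $M^{-1} = \rev\circ M\circ\rev$. Since $\rev = \rev^{-1}$, this says exactly that $M^{-1}$ is the conjugate of $M$ by $\rev$, which is the ``in particular'' clause.

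There is no real obstacle. The only point to record is that the involution property of $\rev$ has been established, in the corollary following Lemma~\ref{lem:rev-switch}. If one wants the identity in every concrete family rather than just the free structure, Proposition~\ref{prop:transport} and the remark after it transfer it, because $h$ and $\rev$ are defined by recursion on $\lambda$ and $\oplus$.

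As an optional sanity check, one could also use the alternative form $M = \rev\circ\corev$ from Theorem~\ref{thm:h-factorization}. Then $M^{-1} = \corev\circ\rev$, and
\[
  \rev\circ(\rev\circ\corev)\circ\rev = \corev\circ\rev ,
\]
which gives the same result.
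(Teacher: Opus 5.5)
Your proof is correct and is essentially the paper's own argument: invert $M = h\circ\rev$ using that $h$ and $\rev$ are involutions, then insert $\rev\circ\rev = \operatorname{id}$ to recognize $\rev\circ h$ as $\rev\circ M\circ\rev$. Your optional check via $M = \rev\circ\corev$ is also valid, but it is not needed.
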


\begin{proof}
Since $\rev$ and $h$ are involutions,
\[
  M^{-1} = (h \circ \rev)^{-1} = \rev \circ h
    = \rev \circ h \circ \rev \circ \rev = \rev \circ M \circ \rev. \qedhere
\]
\end{proof}

\begin{theorem}\label{thm:M-no-fixedpoints}
The only fixed points of $M$ are $e$ and $\lambda(e)$.
\end{theorem}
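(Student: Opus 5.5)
The plan is to use the first-return formula \eqref{eq:M-first-return} from the proof of Theorem~\ref{thm:M-intertwine},
\[
  M(\lambda(a) \oplus b) = M(a) \oplus \lambda(M(b)),
\]
which writes $M(t)$ directly in last-return form. Then I compare it with the last-return decomposition of $t$ itself and use uniqueness of that decomposition together with the fact that $M$ preserves size. No induction should be needed.

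First, both candidates are fixed points. $M(e) = e$ holds by Theorem~\ref{thm:M-intertwine}. For $\lambda(e)$, the same theorem gives $M(\lambda(e)) = \gamma(M(e)) = \gamma(e) = e \oplus \lambda(e) = \lambda(e)$.

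Conversely, let $t \neq e$ with $M(t) = t$, and write $t = \lambda(a) \oplus b$ in first-return form. I split into two cases according to whether $b = e$.

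\emph{Case $b = e$.} Here $M(t) = M(a) \oplus \lambda(e)$, so the fixed-point condition reads $\lambda(a) = M(a) \oplus \lambda(e)$. The left side is indecomposable. If $M(a) \neq e$, the right side would be decomposable, contradicting the unique decomposition property of Definition~\ref{def:catalan-structure}. Hence $M(a) = e$, which forces $a = e$ because $M$ is a size-preserving bijection. Thus $t = \lambda(e)$.

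\emph{Case $b \neq e$.} Write $b = b' \oplus \lambda(c)$ in last-return form, with $b'$ possibly empty. Then $t = (\lambda(a) \oplus b') \oplus \lambda(c)$ is the last-return decomposition of $t$. The fixed-point equation gives a second last-return form,
\[
  t = M(a) \oplus \lambda(M(b)).
\]
By uniqueness of the last-return decomposition, $c = M(b)$, so $|c| = |M(b)| = |b|$. On the other hand $|b| = |b'| + |c| + 1 > |c|$, which is a contradiction. So no fixed point has $b \neq e$.

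The only delicate point is the bookkeeping in the second case. One must correctly identify the last indecomposable summand of $t$ as the last summand of $b$, and recognise that \eqref{eq:M-first-return} already produces a last-return form. Once that is seen, the size comparison closes the argument immediately.
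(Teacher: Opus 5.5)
Your proof is correct, and it takes a more direct route than the paper's.

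\textbf{The paper's route.} The paper rewrites $M(t)=t$ as $h(t)=\rev(t)$ and tracks the size of the last indecomposable summand under each factor separately. It proves by induction that $\mathrm{last}(h(t)) = |t|-\mathrm{last}(t)+1$, using the fact that $\ominus$ only enlarges the last summand of its first argument. It then observes that $\mathrm{last}(\rev(t))=\mathrm{first}(t)$. Together these give $|t|+1=\mathrm{first}(t)+\mathrm{last}(t)$, which is finished by a case analysis on the number $k$ of summands.

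\textbf{Your route.} You invoke \eqref{eq:M-first-return}, which already presents $M(\lambda(a)\oplus b)$ in last-return form. Its last summand is $\lambda(M(b))$, of size $|b|+1=|t|+1-\mathrm{first}(t)$. Uniqueness of the last-return decomposition then yields the same constraint in one step. Your cases $b=e$ and $b\neq e$ correspond to the paper's $k=1$ and $k\geq 2$. The derivation of \eqref{eq:M-first-return} is valid for $b=e$ as well, so your use of it in the first case is legitimate.

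\textbf{Comparison.} Your version is shorter and needs no induction. It also avoids the appeal to the plane-tree picture of $\ominus$ in the paper's inductive step. The paper's version produces separate statements about how $h$ and $\rev$ individually act on summand sizes. These have some independent interest but are not needed for the theorem itself.
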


\begin{proof}
Both $e$ and $\lambda(e)$ are fixed by $M$
(since $\rev$ and $h$ each fix them).
For the converse,
since $M = h \circ \rev$, $M(t) = t$ implies $h(t) = \rev(t)$.
For each nonempty element
$t = \lambda(s_1) \oplus \cdots \oplus \lambda(s_k)$,
define $\mathrm{first}(t) = 1 + |s_1|$ and
$\mathrm{last}(t) = 1 + |s_k|$ (the sizes of the first
and last indecomposable summands).

We claim that $\mathrm{last}(h(t))
= |t| - \mathrm{last}(t) + 1$
for all nonempty $t$. If $t = \lambda(a)$, then
$h(t) = \gamma(h(a))$ and the last summand of
$\gamma(h(a))$ is $\lambda(e)$, giving
$\mathrm{last}(h(t)) = 1 = |t| - |t| + 1$.
If $t = \lambda(a) \oplus v$ with $v$ nonempty, then
$h(t) = h(v) \ominus \gamma(h(a))$, which replaces the
leaf at the end of the rightmost-child chain in the last
summand of $h(v)$ by $\gamma(h(a))$, increasing that
summand's size by $|\gamma(h(a))| = |a| + 1$.
By induction,
$\mathrm{last}(h(t))
= \mathrm{last}(h(v)) + |a| + 1
= (|v| - \mathrm{last}(v) + 1) + |a| + 1
= |t| - \mathrm{last}(t) + 1$.

Similarly, $\mathrm{last}(\rev(t))
= \mathrm{first}(t)$
since $\rev$ reverses the sequence of indecomposable
summands (recursing into each).

Now if $h(t) = \rev(t)$ for nonempty $t$, applying
$\mathrm{last}$ to both sides gives
$|t| - \mathrm{last}(t) + 1 = \mathrm{first}(t)$,
that is,
$|t| + 1 = \mathrm{first}(t) + \mathrm{last}(t)$.
We consider three cases. For $k = 1$:
$\mathrm{first}(t) = \mathrm{last}(t) = |t|$, giving
$|t| + 1 = 2|t|$, so $|t| = 1$.
For $k = 2$: $|t| = \mathrm{first}(t) +
\mathrm{last}(t)$ (no middle summands), giving
$|t| + 1 = |t|$, a contradiction.
For $k \geq 3$: the middle summands contribute at
least $k - 2 \geq 1$ to $|t|$, so
$|t| > \mathrm{first}(t) + \mathrm{last}(t)$,
also a contradiction.
\end{proof}

We now describe $M$ concretely on plane trees.
The map $\rev$ reverses the child list at
every level:
\[
  \rev([c_1, \ldots, c_k])
    = [\rev(c_k), \ldots, \rev(c_1)].
\]
Using \eqref{eq:M-first-return} from the proof of Theorem~\ref{thm:M-intertwine}, $M$ admits the
recursive description
\begin{align}\label{eq:M-tree}
  M([\,]) &= [\,] \notag\\
  M([c_1, \ldots, c_k])
    &= M(c_1) \oplus \lambda(M([c_2, \ldots, c_k]))
\end{align}
since $[c_1, \ldots, c_k] = \lambda(c_1) \oplus
[c_2, \ldots, c_k]$ and
\eqref{eq:M-first-return} gives
$M(\lambda(c_1) \oplus [c_2, \ldots, c_k])
= M(c_1) \oplus \lambda(M([c_2, \ldots, c_k]))$.
Compare with the description of $h$ on trees
in Section~\ref{sec:trees}:
\[
  h([c_1, \ldots, c_k])
    = h(c_k) \oplus \lambda(h(c_{k-1}) \oplus
      \lambda(\cdots \oplus \lambda(h(c_1)
      \oplus \lambda(e)) \cdots)).
\]
The two have the same form, but $M$ processes children
left-to-right and $h$ right-to-left, reflecting
$h = M \circ \rev$.

\begin{theorem}[Donaghey's automorphism]\label{thm:donaghey}
The recursion \eqref{eq:M-tree} coincides with
Donaghey's automorphism~\cite{Donaghey80} on plane
trees.
\end{theorem}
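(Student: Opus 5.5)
I will first recall Donaghey's construction precisely. A plane tree on $n+1$ nodes is encoded by a general bracketing, and a binary tree by a binary bracketing. $R$ and $L$ are the two natural size-preserving bijections from plane trees to binary trees (equivalently, to binary bracketings), and $M = L^{-1}\circ R$. I expect, and will check against \cite{Donaghey80}, that these are as follows. $R$ is the left-child/right-sibling encoding of Section~\ref{sec:bijections}, i.e.\ the canonical bijection to binary trees with $\lambda(T)=(T,\varnothing)$ and $\oplus$ along the right spine. $L$ is the ``mirror'' encoding, in which the roles of the two subtrees are exchanged. Concretely, both should be expressible through the first-return decomposition: for $t=\lambda(a)\oplus b$ one has $R(t)=(R(a),R(b))$, and for $L$ the recursion attaches the images of $a$ and $b$ in swapped positions.

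The cleanest route avoids matching bracket manipulations directly and instead uses the uniqueness principle stated just before Theorem~\ref{thm:M-intertwine}. A size-preserving map is determined by a recursion on the first-return decomposition. So I will show that Donaghey's $D=L^{-1}\circ R$ satisfies $D([\,])=[\,]$ and
\[
D(\lambda(c_1)\oplus T')=D(c_1)\oplus\lambda(D(T')),
\]
which is \eqref{eq:M-tree}; induction on size then gives $D=M$. To get there, I write $R(\lambda(a)\oplus b)=(R(a),R(b))$, which is immediate from the left-child/right-sibling rule. I then need the inverse description of $L$. Namely, a plane tree whose $L$-image is the binary tree $(X,Y)$ must be $L^{-1}(X)\oplus\lambda(L^{-1}(Y))$. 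Equivalently, $L(s\oplus\lambda(w))=(L(s),L(w))$, so that $L$ is the binary encoding built from the \emph{last}-return decomposition. Combining the two gives
\[
D(\lambda(a)\oplus b)=L^{-1}\bigl((R(a),R(b))\bigr)=L^{-1}(R(a))\oplus\lambda\bigl(L^{-1}(R(b))\bigr)=D(a)\oplus\lambda(D(b)),
\]
exactly as required. Equivalently, $D$ carries $\lambda$ to $\gamma$ and $\oplus$ to $\ominus$, so Theorem~\ref{thm:M-intertwine} and the remark preceding it finish the argument.

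The main obstacle is the first step: extracting from Donaghey's bracket-language definitions of $R$ and $L$ that $R$ is the first-return (left-child/right-sibling) encoding and $L$ the last-return one. Conventions may differ. Donaghey may use $n$ nodes versus $n$ edges, or left versus right orientation. If his $L$ turns out to be a mirror image, the identification may only hold up to $\rev$ or the reflection $\corev$. My first attempt is to translate both maps on the bracket words and verify the recursion on the outermost bracket pair. As a sanity check, I will compute $D$ and $M$ on $C_3$ to fix any orientation convention before writing the induction.
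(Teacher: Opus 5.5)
Your proposal is correct and is essentially the paper's argument in binary-tree language: the paper reads Donaghey's plus-binary bracketing $[b_1,\ldots,b_k]$ as a right-nested list decoded by $b_1\oplus\lambda(b_2\oplus\lambda(\cdots\oplus\lambda(b_k\oplus\lambda(e))\cdots))$, which is exactly the inverse of your last-return encoding $L(s\oplus\lambda(w))=(L(s),L(w))$. It takes $R$ (Donaghey's Property~1) to be your first-return (left-child/right-sibling) encoding and checks \eqref{eq:M-tree} by the same induction on $\lambda(c_1)\oplus[c_2,\ldots,c_k]$. One caution: only this last-return form of $L$ works, because the literal reading of your first paragraph, $L(\lambda(a)\oplus b)=(L(b),L(a))$, would give $L^{-1}\circ R=\corev$, an involution, so the orientation ambiguity you mention is not harmless.
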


\begin{proof}
Donaghey constructs his automorphism as $L^{-1} \circ R$,
where $R$ and $L$ are two maps between what he calls
``general bracketings'' and ``plus-binary bracketings''
(well-matched parenthesizations, and a variant in which
some brackets are replaced by $+$ signs). We describe
the construction in our language.

Both types of bracketing are counted by the Catalan
numbers and can be represented as plane trees. A general
bracketing corresponds to a plane tree in the usual way
(nested parentheses encode the parent--child relation).
A plus-binary bracketing has a right-nested structure:
its tree form is a (possibly empty) right-nested list
$[b_1, \ldots, b_k]$, where each $b_i$ is itself a
plus-binary bracketing.

The two representations correspond to different
decompositions in the free Catalan structure.
A general bracketing (plane tree) with child list
$[c_1, \ldots, c_k]$ decodes via the first-return
decomposition:
\[
  \lambda(c_1) \oplus \cdots \oplus \lambda(c_k).
\]
A plus-binary bracketing $[b_1, \ldots, b_k]$ decodes
by nesting each successive element inside the last
summand:
\[
  b_1 \oplus \lambda(b_2 \oplus
    \lambda(\cdots \oplus \lambda(b_k \oplus
    \lambda(e)) \cdots)).
\]
Figure~\ref{fig:donaghey-decoding} illustrates the two
decodings for $k = 3$.

\begin{figure}[ht]
\centering
\begin{tikzpicture}[
  nd/.style={circle, draw, inner sep=1.5pt,
    minimum size=14pt, font=\small},
  level distance=10mm, sibling distance=20mm,
  edge from parent/.style={draw},
  scale=0.85, transform shape
]
% Left: first-return
\node[font=\small] at (0, 1.2) {First-return};
\node[nd] (r1) at (0,0) {$\oplus$}
  child {node[nd] {$\lambda$}
    child {node[nd] {$c_1$}}
  }
  child {node[nd] {$\oplus$}
    child {node[nd] {$\lambda$}
      child {node[nd] {$c_2$}}
    }
    child {node[nd] {$\lambda$}
      child {node[nd] {$c_3$}}
    }
  };

% Right: right-nested
\begin{scope}[xshift=65mm]
\node[font=\small] at (0, 1.2) {Right-nested};
\node[nd] (r2) at (0,0) {$\oplus$}
  child {node[nd] {$c_1$}}
  child {node[nd] {$\lambda$}
    child {node[nd] {$\oplus$}
      child {node[nd] {$c_2$}}
      child {node[nd] {$\lambda$}
        child {node[nd] {$\oplus$}
          child {node[nd] {$c_3$}}
          child {node[nd] {$\lambda$}
            child {node[nd] {$e$}}
          }
        }
      }
    }
  };
\end{scope}
\end{tikzpicture}
\caption{The two decodings of a tree with children
$c_1, c_2, c_3$. The first-return decomposition
(left) produces
$\lambda(c_1) \oplus \lambda(c_2) \oplus \lambda(c_3)$.
The right-nested decoding (right) produces
$c_1 \oplus \lambda(c_2 \oplus
\lambda(c_3 \oplus \lambda(e)))$.
Donaghey's map $M$ reinterprets the first as
the second.}
\label{fig:donaghey-decoding}
\end{figure}

Donaghey's map $R$ (his Property~1) sends the general
bracketing $[c_1, \ldots, c_k]$ to the plus-binary
bracketing
$[R(c_1), \ldots, R(c_k)]$: it preserves the tree
shape but reinterprets the children as a right-nested
list. The inverse $L^{-1}$ converts a plus-binary
bracketing back to a general bracketing.
Thus $L^{-1} \circ R$ takes a plane tree, keeps its
shape, but changes the decoding from the first-return
pattern to the right-nested pattern, which is exactly
the recursion~\eqref{eq:M-tree}. The verification is
by induction on size: for $T = [\,]$ both give $e$;
for $T = [c_1, \ldots, c_k]$ with $k \geq 1$, writing
$T = \lambda(c_1) \oplus [c_2, \ldots, c_k]$ and
applying \eqref{eq:M-first-return} reduces to the
induction hypothesis on~$c_1$ and $[c_2, \ldots, c_k]$.
\end{proof}

\subsection{The free product structure}\label{sec:free-product}

Pushkarev and Byzov~\cite{PushkarevByzov14,PushkarevByzov19} independently
decompose Donaghey's transformation into two involutions, working on \emph{plane
  planted cubic trees} (PPCTs), that is, rooted plane trees in which every
non-leaf has exactly two children (a left son and a right son). PPCTs with $n$
leaves are in bijection with plane trees with $n - 1$ edges (equivalently, with
elements of~$C_{n-1}$).

Their two involutions are:
\begin{itemize}
\item $\sigma$, the \emph{transposition}: swap the left
  and right son at every non-leaf of a PPCT.
\item $\theta$, \emph{Dyck-path reflection}: given a
  Dyck path $w = w_1 \cdots w_{2n}$, reverse it to
  $w_{2n} \cdots w_1$ and exchange $u \leftrightarrow d$.
  This acts on PPCTs via the canonical bijection
  $\varphi$ from PPCTs to Dyck paths:
  $T \mapsto \varphi^{-1}(\theta(\varphi(T)))$.
\end{itemize}
Donaghey's transformation is then
$\varphi^{-1} \circ \theta \circ \varphi \circ
\sigma$~\cite[Definition~2.3]{PushkarevByzov14}.

\begin{proposition}\label{prop:pb-identification}
Under the left-child/right-sibling encoding,
$\sigma$ corresponds to~$\corev$ and
$\varphi^{-1} \circ \theta \circ \varphi$ corresponds
to~$\rev$.
\end{proposition}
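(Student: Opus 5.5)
The plan is to fix one concrete Catalan structure on PPCTs, check that the left-child/right-sibling correspondence is a canonical bijection for it, and then show that $\sigma$ and $\varphi^{-1}\circ\theta\circ\varphi$ satisfy the defining recursions of $\corev$ and $\rev$. A map obeying the same recursion on the unique decomposition as $\rev$ or $\corev$ must equal it, by structural induction exactly as in the proof of Theorem~\ref{thm:h-factorization}, and canonical bijections transport such recursions (Proposition~\ref{prop:transport}).

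First I will set up PPCTs. A PPCT with $n$ leaves becomes a binary tree with $n-1$ nodes by deleting its leaves: an internal node with sons $L,R$ becomes the binary node $(L',R')$, and a bare leaf becomes $\varnothing$. Composing with the inverse of the left-child/right-sibling encoding of Section~\ref{sec:bijections} gives plane trees. I give PPCTs the structure transported from binary trees (Section~\ref{sec:binary}), so that this identification is canonical by construction. Then $e$ is the single leaf, $\lambda(T)$ is a new root with left son $T$ and right son a leaf, and $T_1\oplus T_2$ grafts $T_2$ onto the last leaf of the right spine of $T_1$. The key identity to record is
\[
\lambda(a)\oplus b = (a,b),
\]
meaning the node with left subtree $a$ and right subtree $b$, in both binary trees and PPCTs. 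By the unique decomposition, every nonempty object is $(a,b)$ for exactly one pair $(a,b)$.

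For $\sigma$ and $\corev$, the definition $\corev(\lambda(u)\oplus v)=\lambda(\corev(v))\oplus\corev(u)$ reads $\corev((u,v))=(\corev(v),\corev(u))$ under the identity above. This is precisely the recursive description of $\sigma$: swap the two sons at the root and recurse into both. Induction on size, together with $\sigma(e)=e=\corev(e)$, gives $\sigma=\corev$.

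For $\theta$ and $\rev$, I will work on Dyck paths, where $\lambda(a)\oplus b = u\,a\,d\,b$. Reversing a word and exchanging $u\leftrightarrow d$ is an anti-automorphism of the word monoid, so
\[
\theta(u\,a\,d\,b)=\theta(b)\,u\,\theta(a)\,d=\theta(b)\oplus\lambda(\theta(a)).
\]
Together with $\theta(\epsilon)=\epsilon$, this is the defining recursion of $\rev$. Hence $\theta=\rev$ on Dyck paths, and transporting along the canonical bijection gives $\varphi^{-1}\circ\theta\circ\varphi=\rev$ on PPCTs and on binary trees.

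The main obstacle is a hidden assumption: that the bijection $\varphi$ from PPCTs to Dyck paths used in \cite{PushkarevByzov14} is the canonical one for the structure above. If their $\varphi$ is a different standard encoding, for example a preorder reading of the tree, the argument breaks unless I show it agrees with the canonical one. I would first take their explicit definition and verify the three canonical identities $f(e_A)=e_B$, $f(\lambda_A(t))=\lambda_B(f(t))$ and $f(u\oplus_A v)=f(u)\oplus_B f(v)$ by induction, in the manner of Appendix~\ref{sec:bij-verify}. It is enough to check $\varphi((a,b)) = u\,\varphi(a)\,d\,\varphi(b)$ and $\varphi(\text{leaf})=\epsilon$. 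If their convention instead produces $\varphi(a)\,u\,\varphi(b)\,d$, I would compose with the mirror symmetry and redo the matching, now pairing with a conjugate of $\rev$.
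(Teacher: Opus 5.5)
Your proof is correct. For $\sigma$ it is the same as the paper's argument: reading $\lambda(u)\oplus v$ as the node with left subtree $u$ and right subtree $v$, recursive swapping is exactly the defining recursion of $\corev$.

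For $\theta$ you take a different route. The paper never computes with $\theta$ directly. It combines three facts:
\begin{itemize}
\item $M=h\circ\rev=\rev\circ\corev$ (Theorem~\ref{thm:h-factorization});
\item the identification of $\varphi^{-1}\circ\theta\circ\varphi\circ\sigma$ as Donaghey's map \cite[Theorem~2.1]{PushkarevByzov14};
\item Theorem~\ref{thm:donaghey}.
\end{itemize}
It then cancels $\sigma=\corev$ to get $\varphi^{-1}\circ\theta\circ\varphi=M\circ\corev=\rev$. You instead observe that $\theta$ is an anti-automorphism of the word monoid exchanging $u$ and $d$. Hence $\theta(u\,a\,d\,b)=\theta(b)\,u\,\theta(a)\,d$, which is exactly the recursion of $\rev$ in the Dyck model, and you transport this along the canonical bijection.

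The paper's version is a two-line deduction once Donaghey's map has been identified. However, it rests on cited results and on the conventions of Donaghey's and Pushkarev--Byzov's models matching the ones used here. Yours is self-contained and elementary and needs neither Theorem~\ref{thm:donaghey} nor the external theorem. It also reproves, within the framework, that $\varphi^{-1}\circ\theta\circ\varphi\circ\sigma=\rev\circ\corev=M$.

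The obstacle you flag is settled by the paper's setup, which stipulates that $\varphi$ is the canonical bijection from PPCTs to Dyck paths, so your fallback case is not needed. Your computation does make visible that the statement depends on this choice: the mirrored encoding would give $\corev\circ\rev\circ\corev$ in place of $\rev$. The paper's indirect route carries the same dependence, only hidden inside the citations.
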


\begin{proof}
In the first-return decomposition
$t = \lambda(u) \oplus v$, the left-child/right-sibling
encoding maps $u$ to the left subtree and $v$ to the
right subtree. Swapping left and right recursively
therefore sends $\lambda(u) \oplus v$ to
$\lambda(\corev(v)) \oplus \corev(u)$, which is the
defining recursion of~$\corev$. Thus $\sigma$
corresponds to~$\corev$.
For~$\theta$: by
Theorem~\ref{thm:h-factorization},
$M = h \circ \rev
= (\rev \circ \corev \circ \rev) \circ \rev
= \rev \circ \corev$.
Since
$\varphi^{-1} \circ \theta \circ \varphi \circ \sigma$
also equals $M$
(\cite[Theorem~2.1]{PushkarevByzov14} and
Theorem~\ref{thm:donaghey}), and $\sigma$
corresponds to~$\corev$, we get
$\varphi^{-1} \circ \theta \circ \varphi
= M \circ \corev
= (\rev \circ \corev) \circ \corev = \rev$.
\end{proof}

Shapiro~\cite{Shapiro79} proved
$M^6 = \operatorname{id}$ on compositions and posed the
order question in general. Pushkarev and Byzov
established the following stronger structural result.

\begin{theorem}[{\cite[Theorem~6.4]{PushkarevByzov14}}]
The group generated by $\sigma$ and
$\varphi^{-1} \circ \theta \circ \varphi$
acting on~$C$ is the free product
$\mathbb{Z}_2 * \mathbb{Z}_2$.
\end{theorem}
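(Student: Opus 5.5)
Since $\sigma$ and $\varphi^{-1}\circ\theta\circ\varphi$ are involutions, the group they generate is a quotient of the infinite dihedral group $\mathbb{Z}_2 * \mathbb{Z}_2$. By Proposition~\ref{prop:pb-identification} these two involutions are $\corev$ and $\rev$, and their product is $M=\rev\circ\corev$. Every proper quotient of $\mathbb{Z}_2*\mathbb{Z}_2=\langle a,b\mid a^2=b^2=1\rangle$ either identifies $a$ or $b$ with the identity, or identifies $a$ with $b$, or imposes $(ab)^m=1$ for some $m\ge1$. So the plan is to rule out each of these relations for the action on~$C$.

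The first cases are immediate by exhibiting small elements. On $C_2$, $\rev$ fixes both elements, while $\corev(\lambda(e)\oplus\lambda(e))=\lambda(\lambda(e))$, so $\corev\ne\mathrm{id}$. In $C_3$, $\rev$ swaps $\lambda(\lambda(e))\oplus\lambda(e)$ and $\lambda(e)\oplus\lambda(\lambda(e))$, so $\rev\ne\mathrm{id}$. These same computations give $\rev\neq\corev$. The substantive claim is that $M$ has unbounded order on $C=\bigsqcup_n C_n$. Each $C_n$ is finite, so $M$ has finite order on each $C_n$, and we must show that these orders are unbounded in~$n$.

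For this I would work with the intertwining recursion of Theorem~\ref{thm:M-intertwine}, namely $M(\lambda(t))=\gamma(M(t))$ and $M(u\oplus v)=M(u)\ominus M(v)$, together with its first-return form~\eqref{eq:M-first-return}, $M(\lambda(a)\oplus b)=M(a)\oplus\lambda(M(b))$. The first step is to look for a parametrised family $t_{n}\in C_{n}$, for example elements of the form
\[
\lambda^{a_1}(e)\oplus\lambda^{a_2}(e)\oplus\cdots\oplus\lambda^{a_k}(e)
\]
or their nested analogues, on which~\eqref{eq:M-first-return} can be iterated in closed form. Shapiro's $M^6=\mathrm{id}$ on compositions warns that the naive family of compositions is closed with bounded period, so the family must leave that class. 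I would compute $M$ on small sizes (for instance via the $O(n)$ binary-tree implementation of $\rev$ and $\corev$) to guess a family whose orbit behaves like a rotation or a translation.

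The second step is to certify growth with a statistic. I would track the pair $(\mathrm{first},\mathrm{last})$ of summand sizes used in the proof of Theorem~\ref{thm:M-no-fixedpoints}, together with $\rpath$ and $\subtrees$. The relations $\mathrm{last}(h(t))=|t|-\mathrm{last}(t)+1$ and $\mathrm{last}(\rev(t))=\mathrm{first}(t)$ give how such statistics transform under $M=h\circ\rev$. The aim is to find a subset $S_n\subseteq C_n$ and a statistic $s$ such that $s(M(t))=s(t)+1$ for $t\in S_n$ until $s$ reaches a value of order~$n$. Then the orbit of some element of $S_n$ has length at least of order~$n$, so no relation $M^m=\mathrm{id}$ holds on all of~$C$.

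The main obstacle is precisely this step. $M$ is famously chaotic, so one must locate an invariant subfamily on which it is tame. If a clean monotone statistic does not emerge, my fallback is to reproduce Pushkarev and Byzov's route. That would mean passing to PPCTs, where $\sigma$ is a global mirror, and analysing the orbit of a long ``caterpillar'' tree under alternating $\sigma$ and $\theta$ to show that its orbit length grows linearly.
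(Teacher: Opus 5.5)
\textbf{There is a genuine gap: the proposal never proves the step that carries the whole theorem.}

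Your reduction is correct. The two generators are involutions, which Proposition~\ref{prop:pb-identification} identifies with $\corev$ and $\rev$. Every proper quotient of $\mathbb{Z}_2*\mathbb{Z}_2=\langle a,b\mid a^2=b^2=1\rangle$ satisfies $(ab)^m=1$ for some $m\ge 1$. So the group is free exactly when $M=\rev\circ\corev$ has infinite order on $C$, that is, when $\operatorname{ord}(M|_{C_n})$ is unbounded. Your separate checks that $\rev,\corev\neq\mathrm{id}$ and $\rev\neq\corev$ are correct but unnecessary.

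This is only a restatement, though. The paper runs the same equivalence in the opposite direction: its corollary deduces the unbounded order of $M$ from this theorem, and the theorem itself is not proved in the paper but imported from Pushkarev and Byzov. All the content therefore lies in the step you leave open, and for it you give only a search plan. No family $t_n\in C_n$ is specified, no iterate $M^j(t_n)$ is computed, and no statistic is shown to increase along an orbit. Computations at small sizes, or the A060114 table, can suggest unboundedness but cannot prove it. You also cannot lean on the paper's corollary, or on the claim in Section~\ref{sec:open} that $\operatorname{ord}(M|_{C_n})$ is unbounded, since both rest on the theorem being proved.

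The specific tools you name do not close the gap either.
\begin{itemize}
\item \emph{The $(\mathrm{first},\mathrm{last})$ statistics.} From \eqref{eq:M-first-return}, $M(\lambda(a)\oplus b)=M(a)\oplus\lambda(M(b))$, which gives $\mathrm{last}(M(t))=|t|+1-\mathrm{first}(t)$. But $\mathrm{first}(M(t))=\mathrm{first}(M(a))$ whenever $a\neq e$. This depends on the interior of the first summand, not on $(\mathrm{first}(t),\mathrm{last}(t))$. So these statistics do not form a closed system along an orbit and cannot certify a long one.
\item \emph{The caterpillar fallback.} As stated it is unsupported. The most obvious caterpillars, the two combs, correspond to $\lambda^k(e)$ and $\lambda(e)^{\oplus k}$, and $M$ simply swaps them: $M(\lambda^k(e))=\gamma^k(e)=\lambda(e)^{\oplus k}$, and $M(\lambda(e)\oplus b)=\lambda(M(b))$. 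Their orbits therefore have length $2$.
\end{itemize}

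To make this a proof you must either cite \cite[Theorem~6.4]{PushkarevByzov14}, as the paper does, or exhibit an explicit infinite family whose $M$-orbits you describe in closed form, or at least prove to grow without bound.
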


\begin{corollary}
$\langle h, \rev \rangle$, the group generated by $h$ and $\rev$,
equals $\mathbb{Z}_2 * \mathbb{Z}_2$.
In particular, $M = h \circ \rev$ has infinite order
and $\operatorname{ord}(M|_{C_n})$ is unbounded.
\end{corollary}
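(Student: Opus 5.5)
The plan is to transport the Pushkarev--Byzov theorem through the identification of Proposition~\ref{prop:pb-identification}, and then use the factorization of Theorem~\ref{thm:h-factorization} to replace the generators $\corev,\rev$ by $h,\rev$.

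First I would conjugate by the left-child/right-sibling encoding, which is a size-preserving bijection. By Proposition~\ref{prop:pb-identification}, $\sigma$ corresponds to $\corev$ and $\varphi^{-1}\circ\theta\circ\varphi$ corresponds to $\rev$. Hence the theorem of Pushkarev and Byzov gives $\langle \corev,\rev\rangle \cong \mathbb{Z}_2 * \mathbb{Z}_2$ as a group of permutations of $C$. Concretely, this means $\corev$ and $\rev$ are nontrivial involutions and their product $\rev\circ\corev$ has infinite order.

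Next I would show that $\langle h,\rev\rangle=\langle \corev,\rev\rangle$. By Theorem~\ref{thm:h-factorization}, $h=\rev\circ\corev\circ\rev$, so $h$ lies in $\langle\corev,\rev\rangle$. Conversely, $\corev=\rev\circ h\circ\rev$, so $\corev$ lies in $\langle h,\rev\rangle$. The groups therefore coincide, and the first claim follows.

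Then I would address $M=h\circ\rev$. Since $M=\rev\circ\corev$, it is conjugate by $\rev$ to $\corev\circ\rev$, the inverse of $\rev\circ\corev$. In $\mathbb{Z}_2*\mathbb{Z}_2=\langle a,b\mid a^2=b^2=1\rangle$ the product $ab$ has infinite order, because the reduced words $(ab)^m$ are distinct. Hence $M$ has infinite order. It remains to deduce unboundedness on $C_n$:
\begin{itemize}
\item every map involved preserves size, so $M$ restricts to a permutation of each finite set $C_n$;
\item if $\operatorname{ord}(M|_{C_n})\le N$ for all $n$, then $M^{N!}$ would be the identity on every $C_n$, hence on $C$;
\item this contradicts $M$ having infinite order.
\end{itemize}

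The main obstacle is making sure the isomorphism really is between permutation groups acting on the same set. Pushkarev and Byzov work on PPCTs with $n$ leaves, which correspond to $C_{n-1}$, and the identification passes through an encoding. I would check that conjugation by a fixed bijection preserves the group generated and its isomorphism type, so that ``free product'' transfers verbatim. Everything else is formal.
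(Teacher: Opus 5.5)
Your proposal is correct and follows essentially the same route as the paper: you transport the Pushkarev--Byzov theorem to $\langle \corev, \rev\rangle$ via Proposition~\ref{prop:pb-identification}, use $h=\rev\circ\corev\circ\rev$ (and, as you note, $\corev=\rev\circ h\circ\rev$) to identify this group with $\langle h,\rev\rangle$, and deduce unboundedness of $\operatorname{ord}(M|_{C_n})$ from the infinite order of $M$, since bounded orders would give a global exponent. The one unnecessary step is conjugating $M$ to its inverse, because $M=\rev\circ\corev$ is already the product of the two free generators.
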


\begin{proof}
By Proposition~\ref{prop:pb-identification},
$\langle \sigma, \varphi^{-1} \circ \theta \circ
\varphi \rangle
= \langle \corev, \rev \rangle$. Since
$h = \rev \circ \corev \circ \rev$
(Theorem~\ref{thm:h-factorization}),
$\langle \corev, \rev \rangle
= \langle h, \rev \rangle$. By the theorem above this group is the
free product $\mathbb{Z}_2 * \mathbb{Z}_2$, whose only relations are
$h^2 = \rev^2 = \operatorname{id}$. Hence $M = h \circ \rev$ has
infinite order, and no single~$k$ satisfies $M^k = \operatorname{id}$
on all of~$C$. As each $M|_{C_n}$ permutes the finite set~$C_n$, its order is
finite; were these orders bounded, their least common multiple would be such a
global~$k$. Hence $\operatorname{ord}(M|_{C_n})$ is unbounded.
\end{proof}

\begin{remark}
  The group $\langle\rev,\corev\rangle$ is the infinite dihedral group
  $\mathbb{Z}_2*\mathbb{Z}_2$, whose reflections are the odd-length
  words in $\rev$ and $\corev$. Besides the generators, the two shortest
  are $h=\rev\circ\corev\circ\rev$ and
  $k:=\corev\circ\rev\circ\corev$. Since $\corev$ is an involution,
  $k=\corev\circ\rev\circ\corev^{-1}$ is conjugate to $\rev$; dually $h$
  is conjugate to $\corev$. The reflections of this group form exactly
  two conjugacy classes, represented by $\rev$ and $\corev$; hence $k$
  and $h$ lie in different classes and are not conjugate to each other.
  Since conjugate involutions have
  the same cycle type, $k$ and $\rev$ have equally many fixed points
  on each $C_n$. In the Dyck model $\rev$ is reversing the
  path, so its fixed points are the paths symmetric about their vertical
  midline, and
  \[
    |\{t \in C_n : k(t) = t\}|
    = |\{t \in C_n : \rev(t) = t\}|
    = \binom{n}{\lfloor n/2\rfloor},
  \]
  the central binomial coefficient (\oeis\ A001405). By contrast, $h$
  has $|C_m|$ fixed points of odd size $n = 2m+1$ and none of positive
  even size (Proposition~\ref{prop:h-fixed-points}).
\end{remark}

In~\cite{PushkarevByzov19}, Pushkarev and Byzov prove
that the number of orbits of~$M$ on~$C_n$ grows as
$\Theta(2^n)$ and construct explicit families of cycles
of lengths~$6$ and~$9$.
The order of~$M$ controls the periodicity of iterated
secondary structures.

\subsection{Iterated secondary structures}\label{sec:iterated}

Starting from $(\lambda_1, \oplus_1)$, the
secondary structure construction yields
$(\lambda_2, \oplus_2) = (\gamma, \ominus)$. Iterating
gives a sequence
\[
  (\lambda_1, \oplus_1) \to (\lambda_2, \oplus_2)
    \to (\lambda_3, \oplus_3) \to \cdots.
\]
The general step defines $\lambda_{i+1}$ by
\[
    \lambda_{i+1}(t) = t \oplus_i \lambda_i(e)
\]
and $\oplus_{i+1}$ recursively in the first argument by
\begin{align*}
           e \oplus_{i+1} v &= v \\
       \lambda_i(u) \oplus_{i+1} v
         &= \lambda_i(u \oplus_{i+1} v) \\
    (t \oplus_i u) \oplus_{i+1} v
      &= t \oplus_i (u \oplus_{i+1} v).
\end{align*}
The dual relations are
$\lambda_i(t) = \lambda_{i+1}(e) \oplus_{i+1} t$ and
\begin{align*}
    t \oplus_i e &= t \\
    t \oplus_i \lambda_{i+1}(u)
      &= \lambda_{i+1}(t \oplus_i u) \\
    t \oplus_i (u \oplus_{i+1} v)
      &= (t \oplus_i u) \oplus_{i+1} v \qquad (u \neq e).
\end{align*}

Since $h$ and $\rev$ both preserve size,
$M = h \circ \rev$ restricts to a permutation of each
finite set $C_k$.

\begin{theorem}\label{thm:period}
$(\lambda_{p+1}, \oplus_{p+1})
= (\lambda_1, \oplus_1)$ if and only if
$M^p = \operatorname{id}$ on $C$.
\end{theorem}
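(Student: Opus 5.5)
Instead of tracking the structures directly, I will compare each $(\lambda_i,\oplus_i)$ with the original one by transport along powers of~$M$. The key claim is
\[
  \lambda_{i+1}(t) = M^{i}\bigl(\lambda_1(M^{-i}(t))\bigr),
  \qquad
  u \oplus_{i+1} v = M^{i}\bigl(M^{-i}(u) \oplus_1 M^{-i}(v)\bigr),
\]
that is, $M^i$ is an isomorphism of Catalan structures from $(C,e,\lambda_1,\oplus_1)$ onto $(C,e,\lambda_{i+1},\oplus_{i+1})$.

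\emph{Base case and transport.} For $i=1$ this is Theorem~\ref{thm:M-intertwine}, since $M$ is a bijection with $M(\lambda(t))=\gamma(M(t))$ and $M(u\oplus v)=M(u)\ominus M(v)$. For the induction I use one observation: the passage from a Catalan structure to its secondary structure is defined only from $e$, $\lambda$, $\oplus$ by a fixed recursion, so it commutes with isomorphisms. If $\psi$ is an isomorphism from $(C,e,\lambda,\oplus)$ onto $(C,e,\lambda',\oplus')$, then $\psi$ is also an isomorphism between the secondary structures. For $\gamma$ this is immediate: $\psi(t\oplus\lambda(e))=\psi(t)\oplus'\lambda'(e)$. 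For $\ominus$ I induct on the three-form decomposition of the first argument, using the three defining clauses; since $\psi$ preserves decompositions, each clause is carried to the corresponding clause. This is essentially the argument of Proposition~\ref{prop:transport}.

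\emph{Induction step.} Suppose $M^{i}$ is an isomorphism from structure~$1$ to structure~$i{+}1$. Applying the observation, $M^i$ is then an isomorphism from structure~$2$, the secondary structure of~$1$, to structure~$i{+}2$. Composing with $M\colon 1\to 2$ shows that $M^{i+1}$ is an isomorphism from structure~$1$ to structure~$i{+}2$. This order of composition is the one I must check carefully: $M^i\circ M$ does indeed go $1\to2\to i{+}2$.

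\emph{The equivalence.} If $M^p=\mathrm{id}$, the displayed formulas with $i=p$ give $(\lambda_{p+1},\oplus_{p+1})=(\lambda_1,\oplus_1)$ at once. For the converse, suppose $(\lambda_{p+1},\oplus_{p+1})=(\lambda_1,\oplus_1)$. Then $M^p$ is an automorphism of $(C,e,\lambda_1,\oplus_1)$, meaning it satisfies $M^p(e)=e$, $M^p(\lambda_1(t))=\lambda_1(M^p(t))$ and $M^p(u\oplus_1 v)=M^p(u)\oplus_1 M^p(v)$. By unique decomposition and structural induction, which is the uniqueness of the canonical bijection, the only such map is the identity. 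Hence $M^p=\mathrm{id}$ on~$C$.

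The main obstacle is the transport observation, particularly for $\ominus$. Its recursion is applied to a first argument written as $t\oplus u$, and this factorization is not unique. I will fix the indecomposable-first reading allowed in Definition~\ref{def:secondary}, so that the induction runs cleanly on size.
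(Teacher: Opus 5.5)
Your proposal is correct and follows the paper's proof. Both arguments establish $\lambda_{i+1}=M^{i}\circ\lambda_1\circ M^{-i}$ and the matching formula for $\oplus_{i+1}$ by induction from Theorem~\ref{thm:M-intertwine}, using that the secondary-structure construction is natural under isomorphisms, and both then conclude because the only map commuting with $e$, $\lambda_1$ and $\oplus_1$ is the identity. Your clause-by-clause check that an isomorphism carries $\ominus$ to $\ominus'$ spells out the step the paper only summarizes as ``conjugating by $M$ preserves these relations.''
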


\begin{proof}
The intertwining theorem (Theorem~\ref{thm:M-intertwine}) gives
$M \circ \lambda_1 = \lambda_2 \circ M$ and
$M(u \oplus_1 v) = M(u) \oplus_2 M(v)$.
Induction on $i$ yields
$\lambda_{i+1} = M^i \circ \lambda_1 \circ M^{-i}$
and
$u \oplus_{i+1} v
= M^i(M^{-i}(u) \oplus_1 M^{-i}(v))$.
Indeed, the secondary structure construction
sends $(\lambda_i, \oplus_i)$ to
$(\lambda_{i+1}, \oplus_{i+1})$ using only
$\lambda_i$, $\oplus_i$, and $e$
(see Definition~\ref{def:secondary}); conjugating by $M$
preserves these relations, giving
$\gamma_i = M^{i-1} \circ \gamma_1 \circ M^{-(i-1)}$
and $\ominus_i$ transforms accordingly.

Now $(\lambda_{p+1}, \oplus_{p+1})
= (\lambda_1, \oplus_1)$ means
$M^p \circ \lambda_1 = \lambda_1 \circ M^p$ and
$M^p(u \oplus_1 v) = M^p(u) \oplus_1 M^p(v)$. Setting
$\phi = M^p$, these two conditions give
$\phi(\lambda(t)) = \lambda(\phi(t))$ and
$\phi(u \oplus v) = \phi(u) \oplus \phi(v)$. Together
with $\phi(e) = e$ (since $\phi$ preserves size), $\phi$
preserves the three-form decomposition. Since every
element is uniquely built from $e$, $\lambda$, and
$\oplus$, any size-preserving endomorphism that commutes
with these operations is the identity.
Conversely, $M^p = \operatorname{id}$
trivially implies
$(\lambda_{p+1}, \oplus_{p+1})
= (\lambda_1, \oplus_1)$.
\end{proof}

The smallest $p$ such that $M^p$ is the identity on $\bigsqcup_{k \leq n} C_k$
is the least common multiple
$\operatorname{lcm}_{0 \leq k \leq n} \operatorname{ord}(M|_{C_k})$.  The first
few values are (\oeis\ A060114):

\begin{center}
\begin{tabular}{r|ccccccccc}
$n$ & 2 & 3 & 4 & 5 & 6 & 7 & 8 & 9 & 10 \\
\hline
$\operatorname{lcm}$ & 2 & 6 & 6 & 30 & 120 & 720 &
  15120 & 1164240 & 15135120
\end{tabular}
\end{center}

\section{Open problems}\label{sec:open}

We collect several questions left open by the present
work.

\begin{enumerate}
\item \emph{Description of $h$ and the canonical statistics on different Catalan
    families.} Given the great number of Catalan families~\cite{Stanley15}, it
  would be interesting to see which ones admit a simple description of $h$ and
  the canonical statistics.

\item \emph{Further equidistribution results.}  The involution $h$
  translates more statistics in a meaningful way on some Catalan families. For
  example, let the \emph{descent word} of a permutation $\pi \in S_n$ be
  $w(\pi) \in \{A, D\}^{n-1}$, with $i$th letter $D$ at a descent and $A$ at an
  ascent, and let $\rc$ be the reverse--complement (reverse the word and swap
  $A \leftrightarrow D$). Using induction, one can show that $h$
  reverse-complements the descent word on $\Av(231)$, that is,
  $w(h(\pi)) = \rc(w(\pi))$. Consequently,
  $|w(h(\pi))|_x = |w(\pi)|_{\rc(x)}$ for every factor~$x$. Note that
  $\des(\pi) = |w(\pi)|_D$, $\asc(\pi) = |w(\pi)|_A$, and, similarly, $\peak$,
  $\valley$, $\ddes$, $\dasc$ count the factors $AD$, $DA$, $DD$, $AA$ of the
  descent word. Since
  $\rc$ swaps $D \leftrightarrow A$ and $DD \leftrightarrow AA$ and fixes $AD$
  and $DA$, it follows that $h$ exchanges descents with ascents
  and double descents with double ascents while preserving peaks and valleys.
  Equivalently,
  \[
    (\des, \ddes, \comp, \rmin, \peak, \valley)
    \;\text{ and }\;
    (\asc, \dasc, \rmax, \nonrmin, \peak, \valley)
  \]
  are equidistributed on $\Av(231) \cap S_n$. This is family-specific:
  on $\Av(321)$, $h$ preserves valleys but $\des(h(\pi)) = \asc(\pi)$
  fails already at $n = 3$. It would be interesting to find further
  equidistributions on the other families.

\item \emph{Cycle structure of Donaghey's map.}  By
  Theorem~\ref{thm:M-no-fixedpoints}, $M$ has no fixed points for $n \geq 2$,
  and by Section~\ref{sec:free-product}, $\operatorname{ord}(M|_{C_n})$ is unbounded.
  Knuth asks~\cite[Ex.~17, \S7.2.1.6]{Knuth06} for a characterization of the
  fixed points of $(T \circ R)^2$, which reduces to characterizing the
  $2$-cycles of~$M$.  Beyond the partial results
  in~\cite{PushkarevByzov14,PushkarevByzov19,Shapiro79}, the cycle structure
  remains poorly understood.

\item \emph{Order of $M$ on $C_n$.}  The period of the iterated
  secondary structures equals
  $\operatorname{lcm}_{0 \leq k \leq n}
  \operatorname{ord}(M|_{C_k})$
  (\oeis\ A060114; see the table in
  Section~\ref{sec:iterated}).
  No closed form for
  $\operatorname{ord}(M|_{C_n})$ is known.

\item \emph{A transparent model for both $h$ and
  $\rev$.}
  On triangulations, $h$ is vertex complement
  (Proposition~\ref{prop:vertex-complement}), a global
  geometric operation, but $\rev$ is recursive. On plane
  trees, $\rev$ reverses child lists at every level (a
  global operation), but $h$ is recursive. Since
  $M = h \circ \rev$, a Catalan model on which both $h$
  and $\rev$ have non-recursive descriptions
  would give a direct geometric handle on the cycle
  structure of~$M$. Is there such a model among the
  over two hundred catalogued in~\cite{Stanley15}?

\item \emph{Direct description of $\oplus_3$.}
  The tertiary $\lambda_3$ has a simple description
  on each family (on plane trees, for instance, it adds a
  new leaf as the sole child of the rightmost leaf, so the
  $\oplus_3$-indecomposable trees are those
  whose rightmost leaf is an only child). However,
  $\oplus_3$ involves the full secondary decomposition.
  Is there a direct, non-recursive description
  of~$\oplus_3$ on any concrete Catalan family?

\item \emph{Secondary composition on $\Av(321)$.}
  On $\Av(231)$, the operation $\ominus$ has a direct
  permutation-level description in terms of the last
  entry of~$\pi_1$ (see~Section~\ref{sec:av231}). On
  $\Av(321)$, we define $\ominus$ only through the
  generic recursion on indecomposables
  (see Section~\ref{sec:av321}). Is there a direct description
  there too?
\end{enumerate}

\section{Acknowledgments}
We used Claude (Anthropic) and ChatGPT (OpenAI) to assist with most
aspects of preparing this manuscript. This included exploring through
programming, formulating propositions, developing proofs, and
editing. The authors, however, take full responsibility for the content
of the manuscript.

\appendix

\section{Bijection verification}\label{sec:bij-verify}

The verifications in this appendix and
Appendix~\ref{sec:dict-verify} are included for
completeness.

For each of the seven canonical bijections in
Section~\ref{sec:bijections}, we verify the three identities
(base, indecomposable, decomposable) that characterize the
canonical bijection. Each verification amounts to showing
that the concrete bijection $f$ satisfies
$f(e_1) = e_2$,
$f(\lambda_1(t)) = \lambda_2(f(t))$, and
$f(u \oplus_1 v) = f(u) \oplus_2 f(v)$.
Notation follows Sections \ref{sec:trees}--\ref{sec:ncp}.

\subsection{Triangulations $\to$ plane trees}\label{sec:bij-tri-trees}

Let $f$ send a triangulation of the $(n{+}2)$-gon to a
plane tree as described in Section~\ref{sec:bijections}. The vertex $1$
becomes the root, the vertices adjacent to $1$ (other than
$n{+}2$) become the children of the root, and the
construction recurses into each subpolygon between consecutive
neighbours of $1$.

\textbf{Base.}
The empty triangulation $e_{\text{Tri}}$ is the single
edge $(1,2)$, an edge with no diagonals. On the tree
side, $e_{\text{Trees}} = [\,]$, the single-node tree.
Both are the size-$0$ object, and
$f(e_{\text{Tri}}) = e_{\text{Trees}}$ is immediate (no
vertices adjacent to $1$ besides $2 = n{+}2$). The
triangle $(1,2,3)$ is $\lambda(e)$: vertex $1$ is
adjacent to $2$ and $3$, and the sole subpolygon between them
contains the edge $(2,3)$, which is $e_{\text{Tri}}$. The
image is $[[\,]] = [e] = \lambda(e)$, as required.

\textbf{Indecomposable step.}
We must show
$f(\lambda_{\text{Tri}}(T))
= \lambda_{\text{Trees}}(f(T))$.
The operation $\lambda_{\text{Tri}}(T)$ relabels
$i \to i{+}1$ in a triangulation $T$ of the
$(n{+}2)$-gon and adds vertex $1$ adjacent to $2$ and
$n{+}3$, creating a boundary triangle $(1, 2, n{+}3)$.
In the resulting $(n{+}3)$-gon, vertex $1$ has exactly
one neighbour besides $n{+}3$, namely vertex $2$. The
single subpolygon between them contains the relabeled copy
of $T$. On the tree side, this means the root has exactly
one child, whose subtree is $f(T)$. Thus
$f(\lambda(T)) = [f(T)] = \lambda(f(T))$.

\textbf{Decomposable step.}
We must show
$f(T_1 \oplus_{\text{Tri}} T_2)
= f(T_1) \oplus_{\text{Trees}} f(T_2)$.
The operation $T_1 \oplus T_2$ identifies the edge
$(1, n_1{+}2)$ of $T_1$ with edge $(1,2)$ of $T_2$
(after shifting $T_2$'s labels). In the combined
$(n_1{+}n_2{+}2)$-gon, the neighbours of vertex $1$ are
those from $T_1$ (excluding $n_1{+}2$) followed by those
from the shifted $T_2$. Each subpolygon in
$T_1$ stays intact, and each subpolygon in $T_2$ stays
intact. The subpolygons between consecutive neighbours of $1$
correspond to subtrees hanging from the root, so the
children of the root in $f(T_1 \oplus T_2)$ are exactly
the children of $f(T_1)$ followed by those of $f(T_2)$.
This is $f(T_1) \oplus f(T_2)$, which merges child lists.

\subsection{Plane trees $\to$ Dyck paths}\label{sec:bij-trees-dyck}

Let $f$ send a plane tree to a Dyck path by depth-first
search: each step down an edge produces a $u$-step, each
step back up produces a $d$-step.

\textbf{Base.}
$f([\,]) = \epsilon$: the single-node tree has no edges,
so the traversal produces the empty word.

\textbf{Indecomposable step.}
We must show
$f(\lambda(T)) = \lambda(f(T)) = u f(T) d$.
The tree $\lambda(T) = [T]$ has one child of the root,
with subtree $T$ hanging below it. The traversal first
goes down the root edge (producing $u$), then traverses
the subtree $T$ (producing $f(T)$), then comes back up
the root edge (producing $d$). The result is
$u f(T) d = \lambda(f(T))$.

\textbf{Decomposable step.}
We must show
$f(T_1 \oplus T_2)
= f(T_1) f(T_2) = f(T_1) \oplus f(T_2)$.
The tree $T_1 \oplus T_2$ merges the child lists of the
two roots. The DFS visits the children from left to
right, so it first traverses all children of $T_1$
(producing $f(T_1)$), then all children of $T_2$
(producing $f(T_2)$). The result is the concatenation
$f(T_1) f(T_2)$.

\subsection{Dyck paths $\to$ $2 \times n$ SYT}\label{sec:bij-dyck-syt}

Let $f$ send a Dyck path of semilength $n$ to a
$2 \times n$ SYT by the following rule. Step $i$ is $u$ if and
only if $i$ lies in the top row of the tableau, and
step $i$ is $d$ if and only if $i$ lies in the bottom
row.

\textbf{Base.}
$f(\epsilon) = ([\,], [\,])$: the empty Dyck path
corresponds to the empty tableau.

\textbf{Indecomposable step.}
We must show
$f(\lambda(w)) = \lambda_{\text{SYT}}(f(w))$.
An indecomposable Dyck path $\lambda(w) = uwd$ of
semilength $n$ has step $1$ as $u$ and step $2n$ as $d$.
So $1$ goes in the top row and $2n$ in the bottom row.
The inner steps $2, \ldots, 2n{-}1$ encode the Dyck path
$w$ (shifted by $1$), which by induction gives the SYT
$f(w)$ with entries incremented by $1$. This is exactly
$\lambda_{\text{SYT}}(f(w))$: since $f(w)$ is a
$2 \times (n{-}1)$ SYT, $\lambda_{\text{SYT}}$ prepends
$1$ to the top row and appends
$2(n{-}1){+}2 = 2n$ to the bottom row.

\textbf{Decomposable step.}
We must show
$f(w_1 w_2) = f(w_1) \oplus f(w_2)$.
The concatenation $w_1 w_2$ has the first $2n_1$ steps
determined by $w_1$ and the last $2n_2$ steps by $w_2$.
The first $2n_1$ steps produce $f(w_1)$, and the last
$2n_2$ steps produce $f(w_2)$ with entries shifted by
$2n_1$. Placing these side by side gives
$f(w_1) \oplus f(w_2)$.

\subsection{Plane trees $\to$ binary trees}\label{sec:bij-trees-binary}

Let $f$ be the left-child/right-sibling encoding: the
leftmost edge of the plane tree becomes the root of the
binary tree, its subtree becomes the left child, and the
remaining siblings form the right-sibling chain.

\textbf{Base.}
$f([\,]) = \varnothing$: the single-node tree (no edges)
maps to the empty binary tree.

\textbf{Indecomposable step.}
We must show
$f(\lambda(T)) = \lambda_{\text{Bin}}(f(T))
= (f(T), \varnothing)$.
The tree $\lambda(T) = [T]$ has one child. There is a
single edge $\eta_0$ from root to child, with subtree
$T_1 = T$ below it and no siblings ($T_2$ is a single
node). The binary tree has root corresponding to edge
$\eta_0$, left subtree $f(T)$, and right subtree
$\varnothing$. This is
$(f(T), \varnothing) = \lambda(f(T))$.

\textbf{Decomposable step.}
We must show
$f(T_1 \oplus T_2)
= f(T_1) \oplus_{\text{Bin}} f(T_2)$.
The tree $T_1 \oplus T_2$ merges child lists. The
leftmost edge of $T_1$ becomes the root of the binary
tree, with its subtree as left child. The remaining
children of $T_1$ followed by all children of $T_2$ form
the right-sibling chain. In the binary tree, this means
$f(T_2)$ is attached at the end of the right spine of
$f(T_1)$, which is exactly
$f(T_1) \oplus_{\text{Bin}} f(T_2)$.

\subsection{Plane trees $\to$ non-crossing
  partitions}\label{sec:bij-trees-ncp}

Let $f$ send a plane tree with $n$ edges to a
non-crossing partition of $[n]$ as follows: label the
edges of $T$ in postorder from $1$ to $n$. The labels
along the leftmost path form the first block, and the
construction recurses on the remaining subtrees.

\textbf{Base.}
$f([\,]) = [\,]$: the single-node tree has no edges,
giving the empty partition.

\textbf{Indecomposable step.}
We must show
$f(\lambda(T)) = \lambda_{\text{NCP}}(f(T))$.
The tree $\lambda(T) = [T]$ adds a new root edge. In
postorder, the new root edge is the last visited, so it
receives label $n{+}1$. This edge lies on the leftmost
path, so $n{+}1$ is adjoined to the block containing
$1$. On the NCP side, $\lambda_{\text{NCP}}(P)$ adjoins
$n{+}1$ to the block containing $1$. Thus
$f(\lambda(T)) = \lambda(f(T))$.

\textbf{Decomposable step.}
We must show
$f(T_1 \oplus T_2) = f(T_1) \oplus f(T_2)$.
The tree $T_1 \oplus T_2$ merges child lists. In
postorder, $T_1$'s subtree is processed first, receiving
labels $1, \ldots, n_1$, then $T_2$'s subtree receives
labels $n_1{+}1, \ldots, n_1{+}n_2$. The resulting
partition is $f(T_1) \cup (f(T_2) + n_1)$, which is
$f(T_1) \oplus f(T_2)$: the shifted union.

\subsection{Plane trees $\to$ $\Av(231)$}\label{sec:bij-trees-231}

Let $f$ send a plane tree to a $231$-avoiding
permutation: label the edges in postorder (as in
Section~\ref{sec:bij-trees-ncp}), then read the labels in
preorder.

\textbf{Base.}
$f([\,]) = \epsilon$: no edges, empty permutation.

\textbf{Indecomposable step.}
We must show
$f(\lambda(T)) = \lambda(\,f(T)) = (n{+}1) f(T)$.
The tree $\lambda(T) = [T]$ has a new root edge. In
postorder it is labeled $n{+}1$ (the largest label).
The preorder reading visits the root edge first, so the
permutation begins with $n{+}1$, followed by the
preorder reading of $T$'s edges, which by induction is
$f(T)$. Thus
$f([T]) = (n{+}1) f(T) = \lambda(f(T))$.

\textbf{Decomposable step.}
We must show
$f(T_1 \oplus T_2) = f(T_1) \oplus f(T_2)$.
The postorder labeling assigns $1, \ldots, n_1$ to
$T_1$'s edges and $n_1{+}1, \ldots, n_1{+}n_2$ to
$T_2$'s edges. The preorder reading visits $T_1$'s
subtree first (producing $f(T_1)$, a permutation of
$\{1, \ldots, n_1\}$), then $T_2$'s subtree (producing
$f(T_2)$ shifted by $n_1$, a permutation of
$\{n_1{+}1, \ldots, n_1{+}n_2\}$). The result is the
direct sum $f(T_1) \oplus f(T_2)$.

\subsection{Plane trees $\to$ $\Av(321)$}\label{sec:bij-trees-321}

Let $f$ send a plane tree to a $321$-avoiding
permutation: label the edges in postorder, then apply
the slot-filling traversal described in
Section~\ref{sec:bijections}.

\begin{lemma}\label{lem:retlmax}
In the slot-filling traversal of any tree $S$, the
labels written on return (up) steps are placed in
strictly increasing order, and each occupies a
left-to-right maximum position of $f(S)$.
\end{lemma}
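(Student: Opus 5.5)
The plan is to track the traversal in time and use one fact about the postorder labeling: an edge's postorder label is the rank of the moment the traversal goes back up that edge. So if edge $\eta$ is completed before edge $\eta'$ during the DFS, then $\eta$ has the smaller label. For a leaf edge, the down step and the up step are consecutive, so its label is effectively fixed when it is written. I will also use that the output is built by appending at the right end: a reserved slot and an immediately placed leaf label both occupy the next position to the right of everything produced so far.

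\emph{Increasing order.} The labels written on return steps belong to non-leaf edges. They are written exactly when those edges are completed, so they are written in postorder, i.e.\ in increasing order of label. Next I check that their positions also increase.
\begin{itemize}
\item The procedure never runs out of slots. A non-leaf edge reserves its slot on the way down, before its own return. Hence at every return there are at least as many reserved slots as returns performed so far, including the current one.
\item Each return fills the leftmost empty slot.
\item Any slot reserved later lies to the right of all existing content.
\end{itemize}
So once the slots to the left of some position are filled they stay filled, and successive fills occur at strictly increasing positions. Values and positions therefore both increase along the return steps.

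\emph{Left-to-right maxima.} Let a return step write label $\ell$ into the slot at position $p$, and suppose that slot was reserved at time $r$. The fill happens strictly after $r$, so the edge labelled $\ell$ is completed after time $r$. Every position to the left of $p$ was created before time $r$ and is of one of two kinds.
\begin{itemize}
\item A leaf-edge label written before $r$. That leaf edge was completed before $r$, so its postorder label is smaller than $\ell$.
\item An earlier slot. By the first part it holds a smaller return label. This still holds if that slot was filled after time $r$, because the first part compares fill order and not reservation time.
\end{itemize}
Hence every entry to the left of $p$ is smaller than $\ell$, so $p$ is a left-to-right maximum of $f(S)$.

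The main obstacle is that the edge which reserved slot $p$ need not be the edge whose label ends up there, so the argument must not pair them. I avoid this by comparing only times: the reservation time $r$ against the completion times of the edges involved.
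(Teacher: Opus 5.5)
Your proof is correct, but it is organized differently from the paper's.

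\emph{The paper's route.} It argues directly at the moment $t$ when a non-leaf edge $\eta$ with label $\ell$ returns. Every label written before $t$ belongs to an edge that precedes $\eta$ in postorder: either a descendant of $\eta$, or an edge in a subtree to the left of $\eta$'s ancestor chain. Each such label is therefore smaller than $\ell$. Since $\ell$ goes into the leftmost empty slot, every position to its left is already occupied at time $t$, so $\ell$ sits at a left-to-right maximum. The increasing order then follows because the set of previously placed labels only grows. Positionally it also follows because left-to-right maxima increase from left to right.

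\emph{Your route.} You first show that the procedure never runs out of slots and that fills occur at strictly increasing positions. You then split the entries to the left of $p$ into leaf entries, compared via the reservation time $r$, and slot entries, compared via that monotonicity.

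\emph{Comparison.} The paper's argument is shorter. Comparing with the fill time $t$ instead of $r$ removes the need both for your first part and for the reservation time. As a result, your concern that the reserving edge may differ from the filling edge never arises.

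Your argument, in return, explicitly verifies two facts the paper uses without proof: that a free slot always exists at each return, and that slots are filled left to right. The latter is invoked in the $\Av(321)$ canonicity check. Your observation that a postorder label is just the completion rank also replaces the paper's structural case split into groups (i) and (ii) with a cleaner temporal argument.
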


\begin{proof}
Consider a non-leaf edge $\eta$ with label $\ell$. The
edges whose labels are placed before the return of
$\eta$ fall into two groups: (i) edges in $\eta$'s
subtree, and (ii) edges in left-sibling subtrees at
every ancestor level of $\eta$. Every such edge $\eta'$
is strictly smaller than $\eta$ in postorder. For
group (i), $\eta'$ is a descendant of $\eta$ and so
precedes $\eta$ in postorder. For group (ii), $\eta'$
belongs to a left-sibling subtree of some ancestor of
$\eta$, which is fully processed before $\eta$'s
ancestor chain. Hence $\ell' < \ell$.

When $\eta$ returns, its label $\ell$ is placed in the
leftmost available slot. All positions to the left of
this slot have already been filled with labels from
groups (i) and (ii), each less than $\ell$, so $\ell$
occupies a left-to-right maximum position. Since the set
of labels placed before each successive return only
grows, the return labels are placed in strictly
increasing order.
\end{proof}

\begin{lemma}\label{lem:retfill}
A position $j$ in $\pi = f(T)$ is a return-fill position
if and only if it is a left-to-right maximum but not a
right-to-left minimum.
\end{lemma}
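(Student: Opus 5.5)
We prove the two directions separately, working throughout with the slot-filling traversal of $T$ under the postorder edge labeling. Call a position of $\pi=f(T)$ a \emph{return-fill position} if it was reserved when descending a non-leaf edge and later filled on the return up that edge. Every other position was filled immediately when descending a leaf edge; call these \emph{leaf positions}. Lemma~\ref{lem:retlmax} already shows that every return-fill position is a left-to-right maximum. So the forward direction only needs the claim that no return-fill position is a right-to-left minimum. The converse needs the claim that every leaf position that is a left-to-right maximum is also a right-to-left minimum.

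\emph{Return-fill positions are not right-to-left minima.} Let $\eta$ be a non-leaf edge with label $\ell$, and let $j$ be the slot reserved on descending~$\eta$. The subtree below $\eta$ contains at least one leaf edge. Its deepest first leaf edge $\eta'$ is reached by descending along leftmost children, and is placed immediately during the descent. That placement occurs after slot $j$ was reserved, so $\eta'$ sits at some position $j'>j$, unless it is written into an earlier slot. Leaf labels are written at the current end of the word, not into slots, so $j'>j$. Since $\eta'$ is a descendant of $\eta$, postorder gives $\ell'<\ell$. Hence $\pi(j')<\pi(j)$ with $j'>j$, and $j$ is not a right-to-left minimum. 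To make this rigorous, I will first fix the invariant of the traversal: the word is built left to right, where descending appends either a reserved blank or a leaf label, and returns fill the leftmost blank. I then check that $\ell$ is placed at the slot reserved for~$\eta$ itself. This holds because blanks are filled in order of reservation among those still open, and by the time $\eta$ returns, all slots reserved inside its subtree have been filled. An alternative that avoids this bookkeeping is to note directly that the leftmost open blank lies at or before $j$.

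\emph{Leaf positions.} Let $j$ be a leaf position holding label $\ell$ of a leaf edge~$\eta$. Suppose $j$ is a left-to-right maximum, so every label placed to the left of $j$ is smaller than~$\ell$. I want to show every label to the right of $j$ is larger than~$\ell$. Labels smaller than $\ell$ are exactly the edges preceding $\eta$ in postorder. These are the edges in subtrees lying entirely to the left of $\eta$'s ancestor path; since $\eta$ is a leaf edge, it has no descendants. I claim all these smaller labels already occupy positions $<j$. Leaf edges to the left were appended earlier. The non-leaf edges to the left have already returned, and each filled a blank that was reserved before $j$ was appended. Hence every label at a position $>j$ exceeds~$\ell$.

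The main obstacle is exactly this positional bookkeeping: that a left-side non-leaf edge's label is written into a slot reserved before time~$j$. I would settle it with the same invariant as above, namely that the return of an edge fills the slot reserved at its own descent. Then a fully processed left subtree has all its slots at positions before~$j$. Notice that this argument never used the left-to-right maximum hypothesis. It therefore shows that every leaf position is a right-to-left minimum, which gives the converse even more strongly: a leaf position is always a right-to-left minimum, so it can never be a left-to-right maximum that fails to be a right-to-left minimum. I will also need to check consistency with Lemma~\ref{lem:retlmax}, which requires the filled slot to be the leftmost open blank. That matches the invariant, because blanks reserved inside $\eta$'s subtree are filled before $\eta$ returns, leaving $\eta$'s own slot as the leftmost open one among those reserved after its ancestors' slots. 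The ancestors' slots lie to the left, however, so I must verify that those were already filled. If this fails, I will instead restate Lemma~\ref{lem:retfill} directly in terms of the leftmost-blank rule and re-derive the two claims from monotonicity of the return labels, which is given by Lemma~\ref{lem:retlmax}.
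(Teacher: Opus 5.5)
Your converse half is sound, and it needs no invariant. A non-leaf edge that returned before the leaf edge $\eta$ was descended filled a slot that already existed at that moment, so that slot lies left of $\eta$'s position $j$. Hence every leaf position is a right-to-left minimum, which is also what the paper's converse really shows.

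The gap is in the forward half. You assume that a non-leaf edge's return fills the slot reserved at its own descent, and you build this into your definition of a return-fill position. The procedure instead writes the label into the \emph{leftmost available} slot, which in general is a different one.

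Take $T=[[e,[e]]]$, with these edges:
$\eta_1$ from the root to $x$;
$\eta_2$ from $x$ to a leaf;
$\eta_3$ from $x$ to $z$;
$\eta_4$ from $z$ to a leaf.
Postorder assigns the labels $4,1,3,2$ to $\eta_1,\eta_2,\eta_3,\eta_4$. The traversal does the following:
it reserves position~$1$ for $\eta_1$;
it writes $1$ at position~$2$;
it reserves position~$3$ for $\eta_3$;
it writes $2$ at position~$4$;
$\eta_3$ then fills position~$1$, and $\eta_1$ fills position~$3$.
So $\pi=3142$, which is indeed $\lambda(132)$.

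Neither non-leaf edge fills its own slot. The ancestor's slot is taken by the descendant, exactly the case you flagged. The slot reserved by $\eta_3$ inside $\eta_1$'s subtree is still open when $\eta_1$ returns. Under your definition, positions $1$ and $3$ would not even be return-fill positions, although they are left-to-right maxima that are not right-to-left minima.

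Your witness also fails for the actual position. The label $4$ of $\eta_1$ sits at position~$3$, but its first leaf descendant $\eta_2$ sits at position~$2$, to its left. The fallback via monotonicity of return labels does not produce a smaller entry to the right of a return-fill position. So the forward direction is unproved as written.

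The paper asserts that $\eta$ has a descendant with a smaller label to the right of the position it actually returns to. You can justify this by taking instead the \emph{last} leaf edge $\eta'$ of $\eta$'s subtree, reached by following rightmost children. After $\eta'$ is written, the traversal only ascends until $\eta$ returns, so no slot is reserved in between. The slot $\eta$ fills was therefore reserved before $\eta'$ was appended and lies to its left, while $\ell'<\ell$ by postorder.

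Alternatively, keep your first leaf descendant $\eta'$, but reason about the slot $s$ reserved by $\eta$ rather than about $\ell$:
\begin{itemize}
\item $\eta'$ lies to the right of $s$.
\item No non-leaf edge returns between the reservation of $s$ and the return of $\eta'$, so whichever edge fills $s$ has a label larger than $\ell'$.
\item Reservations and fills are equinumerous, so the reserved slots are exactly the return-filled positions.
\end{itemize}
Together these again show that no return-fill position is a right-to-left minimum.
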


\begin{proof}
That a return-fill position is a left-to-right maximum
is Lemma~\ref{lem:retlmax}. For the ``not a
right-to-left minimum'' part: a non-leaf edge $\eta$
with label $\ell$ returning to position $j$ has at
least one descendant $\eta'$ with $\ell' < \ell$
filling some $j' > j$, so $j$ is not a right-to-left
minimum.

Conversely, suppose a leaf edge $\eta$ with label
$\ell$ fills a position $j$ that is a left-to-right
maximum. Every edge $\eta'$ filling a position
$j' > j$ is either in a right-sibling subtree of an
ancestor of $\eta$, or is an ancestor of $\eta$
returning to a slot $> j$. In both cases $\eta$
precedes $\eta'$ in postorder, so $\ell < \ell'$,
making $j$ a right-to-left minimum.
\end{proof}

\textbf{Verification that $f$ is canonical.}

\textbf{Base.}
$f([\,]) = \epsilon$: no edges, empty permutation.

\textbf{Indecomposable step.}
We must show
$f(\lambda(T)) = \lambda(\,f(T))$.

\emph{Case $T = e$.} The tree $\lambda(e) = [e]$ has one
leaf edge labeled $1$. Slot-filling places it
immediately, giving $f([e]) = 1$. On the other side,
$\lambda(f(e)) = \lambda(\epsilon) = 1$.

\emph{Case $|T| \geq 1$.} Write $\pi = f(T)$,
$n = |T|$, $i = \pi^{-1}(1)$, and
$T' = \lambda(T) = [T]$. Positions in all permutations
are numbered $1, 2, \ldots$ In the postorder labeling on
$T'$, the new root edge $\eta_*$ is visited last and
therefore receives label $n{+}1$, while every edge of
$T$ keeps its original label.

By Lemma~\ref{lem:retfill}, the return-fill positions
of $\pi$ are the left-to-right maxima that are not
right-to-left minima. Since $\pi(i) = 1$ is the global
minimum, $i$ is a right-to-left minimum, so $i$ is not
a return-fill. The DFS traversal descends the non-leaf
edges on the leftmost path (positions $1, 2, \ldots,
i{-}1$) before reaching the leftmost leaf (position $i$,
label $1$), so the return-fills $< i$ are exactly
$\{1, 2, \ldots, i{-}1\}$. Write $c_1 < \cdots < c_q$
for the return-fills $> i$ and set $k = i - 1 + q$.

Run the slot-filling on $T'$. The initial descent along
$\eta_*$ creates one extra slot at position $1$; the
remaining events are those of $T$ shifted right by one.
The $k{+}1$ slots in $\pi' = f(T')$ therefore sit at
$1, 2, \ldots, i, c_1{+}1, \ldots, c_q{+}1$ and are
filled left to right with
$\pi(1), \ldots, \pi(i{-}1),
\pi(c_1), \ldots, \pi(c_q), n{+}1$.
Each non-slot position $l{+}1$ carries $\pi(l)$.

Now compare with $\lambda(\pi)$. Inserting $n{+}1$ at
position $i$ and shifting positions $\geq i$ right by
one gives the same effect on non-return positions:
position $j < i$ retains $\pi(j)$, and position $l{+}1$
for $l \geq i$ retains $\pi(l)$. The boxed positions of
$\lambda$ (see Section~\ref{sec:av321}) are
$B = \{i, c_1{+}1, \ldots, c_q{+}1\}$: position $i$
(newly inserted) together with the positions $> i$
that are left-to-right maxima but not right-to-left
minima, each shifted by $1$. The cyclic
left-shift at $B$ sends $n{+}1$ from position $i$ to
$c_q{+}1$, and $\pi(c_j)$ from $c_j{+}1$ to
$c_{j-1}{+}1$ (or to $i$ when $j = 1$). This matches
the slot-fill values above, so
$\pi' = \lambda(\pi) = \lambda(f(T))$.

\textbf{Decomposable step.}
We must show
$f(T_1 \oplus T_2) = f(T_1) \oplus f(T_2)$.
The tree $T_1 \oplus T_2$ concatenates the child lists
of $T_1$ and $T_2$ under the root. In postorder $T_1$'s
subtree is processed before $T_2$'s, assigning labels
$1, \ldots, n_1$ to $T_1$'s edges and
$n_1{+}1, \ldots, n_1{+}n_2$ to $T_2$'s edges; within
each subtree the relative labeling is identical to that
of $T_i$ alone. The slot-filling procedure processes children left
to right, so it fills $T_1$'s slots first using labels
from $\{1, \ldots, n_1\}$, producing $f(T_1)$ in
positions $1, \ldots, n_1$. It then fills $T_2$'s slots
using labels from $\{n_1{+}1, \ldots, n_1{+}n_2\}$,
producing a shifted copy of $f(T_2)$ in positions
$n_1{+}1, \ldots, n_1{+}n_2$. No cross-talk occurs
because the position ranges and label sets are disjoint.
The result is the direct sum $f(T_1) \oplus f(T_2)$.

\section{Dictionary verification}\label{sec:dict-verify}

We verify that each entry in the dictionary table
of Section~\ref{sec:dictionary} satisfies the canonical recursion from
Section~\ref{sec:equidistribution}. For each family
$\mathcal{C}$ and each native statistic $s$, we check the
three cases (base $e$, indecomposable $\lambda$, and
decomposable $\oplus$), showing that $s$ satisfies the
same recursion as the corresponding canonical
statistic. Once
this is verified, the transport proposition
(Proposition~\ref{prop:transport})
transfers the equidistribution result to $\mathcal{C}$
automatically. The base-case caveat
(Remark~\ref{rem:base-case}) applies to the last two
rows of the dictionary.

\begin{lemma}\label{lem:transport-internal}
If the canonical bijection sends $\leaves$ to a native
statistic $L$, then it sends $\internal$ to $n + 1 - L$
for $n \geq 1$ (with $\internal(e) = 0$ at $n = 0$).
This follows from
$\internal(t) = |t| + 1 - \leaves(t)$ and size
preservation.
\end{lemma}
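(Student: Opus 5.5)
The plan is to first establish the identity $\internal(t) = |t| + 1 - \leaves(t)$ for all $|t| \geq 1$ on the free Catalan structure, and then transport it. I would prove it by structural induction on the three-form decomposition. For $t = \lambda(e)$ the identity reads $\internal(\lambda(e)) = 1 = 1 + 1 - 1$, which holds since $\leaves(\lambda(e)) = \leaves(e) = 1$.

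For $t = \lambda(s)$ with $|s| \geq 1$, the induction hypothesis gives
\[
\internal(t) = 1 + \internal(s) = 1 + |s| + 1 - \leaves(s) = |t| + 1 - \leaves(t),
\]
using $|t| = |s| + 1$ and $\leaves(t) = \leaves(s)$.

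For $t = u \oplus v$ with both summands nonempty, I would add the two inductive instances and subtract one:
\[
\internal(t) = (|u|+1-\leaves(u)) + (|v|+1-\leaves(v)) - 1 = |t| + 1 - \leaves(t),
\]
using additivity of size and of $\leaves$. The only delicate point is that the hypothesis must be applied only to nonempty arguments. This is why the case $\lambda(e)$ is treated as a separate base case rather than through $s = e$: at $e$ itself the formula fails, since $\internal(e) = 0$ while $0 + 1 - \leaves(e) = 0$ happens to agree, but the native statistics need not match there.

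Next, let $\varphi$ be the canonical bijection to a concrete family, and suppose $\leaves(\varphi^{-1}(x)) = L(x)$ for $|x| \geq 1$. Since $\varphi$ preserves size (it maps $C_n$ to the size-$n$ objects by the recursion), for $|x| = n \geq 1$ we get
\[
\internal(\varphi^{-1}(x)) = n + 1 - \leaves(\varphi^{-1}(x)) = n + 1 - L(x).
\]
The value at $n = 0$ is simply $\internal(e) = 0$ from the definition.

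There is no real obstacle here. The one point needing care is to restrict the identity to $n \geq 1$ and keep track of the base-case caveat of Remark~\ref{rem:base-case}, since $L$ itself may be wrong at $n = 0$.
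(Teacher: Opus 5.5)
Your proposal is correct and takes the paper's route: the identity $\internal(t)=|t|+1-\leaves(t)$ combined with size preservation of the canonical bijection, with your structural induction supplying a proof of the identity that the paper only asserts through the plane-tree interpretation. One correction to your aside: the identity does not fail at $e$, since $\internal(e)=0=0+1-\leaves(e)$, as your own computation shows. So the separate base case at $\lambda(e)$ is unnecessary, and the restriction to $n\ge 1$ is needed only because the native statistic $L$ may differ from $\leaves$ at $e$, e.g.\ $\peak(\epsilon)=0$.
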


By this lemma, the $\internal$ row of the dictionary is
determined by the $\leaves$ row, so the per-family
verifications below omit it. All definitions of $e$,
$\lambda$, and $\oplus$ are in Section~\ref{sec:families}.

\subsection{Dyck paths}\label{sec:dict-dyck}

\textbf{$\boldsymbol{\subtrees \to \comp}$.}
The statistic $\comp$ counts returns to the $x$-axis.
For $\epsilon$, $\comp(\epsilon) = 0$. For $uwd$: the
path stays strictly above the $x$-axis between the
initial $u$ and the final $d$, so $\comp(uwd) = 1$.
For $w_1 w_2$: since $w_1$ ends at height $0$ and $w_2$
begins at height $0$, the returns split additively:
$\comp(w_1 w_2) = \comp(w_1) + \comp(w_2)$.

\textbf{$\boldsymbol{\rpath \to \rcomp}$.}
The statistic $\rcomp$ is the length of the maximal run
of $d$-steps at the end. For $\epsilon$,
$\rcomp(\epsilon) = 0$. For $uwd$: the path ends with
$d$; if $w = \epsilon$ the final run is $d$ of length
$1$, and if $w \neq \epsilon$ then $w$ ends with $d$ so
the final run extends by one step. Thus
$\rcomp(uwd) = 1 + \rcomp(w)$.
For $w_1 w_2$: the final run of $d$-steps is determined
entirely by $w_2$ (since $w_2$ starts with $u$, the run
cannot extend into $w_1$), so
$\rcomp(w_1 w_2) = \rcomp(w_2)$.

\textbf{$\boldsymbol{\leaves \to \peak}$.}
See the model proof in Section~\ref{sec:transport}.

\subsection{Triangulations}\label{sec:dict-tri}

\textbf{$\boldsymbol{\subtrees \to \comp}$.}
The statistic $\comp$ counts edges incident to vertex
$1$, minus $1$. For $e = (1,2)$, vertex $1$ has one
neighbour, so $\comp(e) = 0$. For $\lambda(T)$: vertex
$1$ is adjacent to exactly $2$ and $n{+}3$, giving
$\comp = 1$. For $T_1 \oplus T_2$: the neighbours of $1$
in $T_1$ (other than $n_1{+}2$) together with the
neighbours of $1$ in $T_2$ (shifted) give
$\comp(T_1 \oplus T_2)
= \comp(T_1) + \comp(T_2)$.

\textbf{$\boldsymbol{\rpath \to \rcomp}$.}
The statistic $\rcomp$ counts edges incident to vertex
$n{+}2$, minus $1$. For $e$, $\rcomp(e) = 0$. For
$\lambda(T)$: the last vertex of the $(n{+}3)$-gon is
$n{+}3$, which inherits the (shifted) adjacencies of
vertex $n{+}2$ in $T$ and gains vertex $1$ as an
additional neighbour, so
$\rcomp(\lambda(T)) = 1 + \rcomp(T)$.
For $T_1 \oplus T_2$: the last vertex $n_1{+}n_2{+}2$
comes from $T_2$ (shifted), and its adjacencies are
entirely determined by $T_2$, so
$\rcomp(T_1 \oplus T_2) = \rcomp(T_2)$.

\textbf{$\boldsymbol{\leaves \to \tr}$.}
The statistic $\tr$ counts indices
$i \in \{2, \ldots, n{+}1\}$ such that $i$ and $i{+}1$
share a common neighbour $j < i$. For $e$,
$\tr(e) = 0$ (Remark~\ref{rem:base-case}).

For $\lambda(T)$: the shift $i \to i{+}1$ sends each
qualifying index $i$ in $T$ to $i{+}1$ in $\lambda(T)$,
preserving the triangle structure. The only new candidate
is $i = 2$: vertices $2$ and $3$ share vertex $1$ if and
only if $1$ is adjacent to $3$, which holds if and only
if $n{+}3 = 3$, that is,\
$T = e$. When $T = e$, $\lambda(e)$ is the triangle
$(1,2,3)$ and $\tr = 1 = \leaves(e)$. When $T \neq e$,
vertex $1$ is adjacent only to $2$ and $n{+}3 \geq 4$,
so $i = 2$ does not qualify, and
$\tr(\lambda(T)) = \tr(T)$.

For $T_1 \oplus T_2$: the gluing identifies an edge
of $T_1$ with an edge of $T_2$ without creating any
new triangle. Hence the triangles of the combined polygon
are the disjoint union of those of $T_1$ and the
shifted triangles of $T_2$, and the qualifying indices
split between the two ranges:
$\tr(T_1 \oplus T_2) = \tr(T_1) + \tr(T_2)$.

\subsection{$231$-avoiding permutations}\label{sec:dict-231}

\textbf{$\boldsymbol{\subtrees \to \comp}$.}
The statistic $\comp$ counts components,
where a $231$-avoiding permutation is indecomposable when it
starts with its maximum. For $e$,
$\comp(\epsilon) = 0$. For
$\lambda(\pi) = (n{+}1) \pi$: the permutation
starts with its maximum $n{+}1$, so it is indecomposable
and $\comp(\lambda(\pi)) = 1$. For
$\pi_1 \oplus \pi_2$: the prefix $\pi_1$ permutes
$\{1, \ldots, k\}$ and $\pi_2$ (shifted) permutes
$\{k{+}1, \ldots, k{+}m\}$, so the components of the
direct sum are exactly those of $\pi_1$ followed by
those of $\pi_2$:
$\comp(\pi_1 \oplus \pi_2)
= \comp(\pi_1) + \comp(\pi_2)$.

\textbf{$\boldsymbol{\rpath \to \rmax}$.}
The statistic $\rmax$ counts right-to-left maxima. For
$e$, $\rmax(\epsilon) = 0$. For
$\lambda(\pi) = (n{+}1) \pi$: since $n{+}1$ is
prepended and all values in $\pi$ are smaller, the
right-to-left maxima of $\lambda(\pi)$ are exactly those
of $\pi$ together with $n{+}1$ itself. Thus
$\rmax(\lambda(\pi)) = 1 + \rmax(\pi)$.
For $\pi_1 \oplus \pi_2$: every value of $\pi_2$
(shifted) exceeds every value of $\pi_1$, so a
right-to-left maximum in $\pi_1$ is blocked by the
values of $\pi_2$ to its right (unless $\pi_2$ is empty,
but then we are not in the $\oplus$ case). Hence the
right-to-left maxima of the direct sum are exactly those
of the shifted $\pi_2$:
$\rmax(\pi_1 \oplus \pi_2) = \rmax(\pi_2)$.

\textbf{$\boldsymbol{\leaves \to \rmin}$.}
The statistic $\rmin$ counts right-to-left minima. For
$e$, $\rmin(\epsilon) = 0$ (Remark~\ref{rem:base-case}).

For $\lambda(\pi) = (n{+}1) \pi$: the prepended
$n{+}1$ is the global maximum and is never a
right-to-left minimum. The remaining entries are $\pi$
in the same relative order, so their right-to-left
minima are unchanged:
$\rmin(\lambda(\pi)) = \rmin(\pi)$.

For $\pi_1 \oplus \pi_2$: the values of $\pi_1$ lie in
$\{1, \ldots, k\}$ and those of $\pi_2$ (shifted) lie
in $\{k{+}1, \ldots, k{+}m\}$. Every value in $\pi_1$
is smaller than every value to its right coming from
$\pi_2$, so right-to-left minima within $\pi_1$ remain
right-to-left minima of the whole permutation.
Right-to-left minima within the shifted $\pi_2$ are
determined by $\pi_2$ alone (no smaller values follow).
Thus $\rmin(\pi_1 \oplus \pi_2)
= \rmin(\pi_1) + \rmin(\pi_2)$.

\subsection{$321$-avoiding permutations}\label{sec:dict-321}

For $\Av(321)$, $e = \epsilon$,
$\lambda(\pi)$ inserts $n{+}1$ at the position of $1$
and cyclically shifts the boxed positions (see Section~\ref{sec:av321} for
the full description), and $\pi_1 \oplus \pi_2$ is the
direct sum. The closure and bijectivity of $\lambda$ are
proved in Section~\ref{sec:av321}. The arguments below use only the
following structural property: the right-to-left
minima of $\pi$ are never boxed.

\textbf{$\boldsymbol{\subtrees \to \comp}$.}
The statistic $\comp$ counts the components of the
direct-sum decomposition. For $e$,
$\comp(\epsilon) = 0$. For $\lambda(\pi)$: by
Proposition~\ref{prop:321-bijectivity}, $\sigma = \lambda(\pi)$
is indecomposable, so $\comp(\lambda(\pi)) = 1$. For the direct sum:
$\comp(\pi_1 \oplus \pi_2)
= \comp(\pi_1) + \comp(\pi_2)$, by the same argument
as for $\Av(231)$.

\textbf{$\boldsymbol{\rpath \to \rcomp}$.}
The statistic $\rcomp(\pi) = n - \pi(n) + 1$. For $e$,
$\rcomp(\epsilon) = 0$.

For $\lambda(\pi)$: the operation $\lambda$ inserts
$n{+}1$ at the position of $1$ and cyclically shifts the
boxed positions. The last entry of $\lambda(\pi)$
satisfies $\lambda(\pi)(n{+}1) = \pi(n)$: the value
$\pi(n)$ is always a right-to-left minimum of $\pi$
and hence never boxed, so the cyclic shift does not
affect the last position. Therefore
$\rcomp(\lambda(\pi))
= (n{+}1) - \pi(n) + 1 = (n - \pi(n) + 1) + 1
= 1 + \rcomp(\pi)$.

For $\pi_1 \oplus \pi_2$ with $|\pi_1| = k$: the last
entry is $\pi_2(m) + k$ where $m = |\pi_2|$. Then
$\rcomp(\pi_1 \oplus \pi_2)
= (k{+}m) - (\pi_2(m) + k) + 1
= m - \pi_2(m) + 1 = \rcomp(\pi_2)$.

\textbf{$\boldsymbol{\leaves \to \rmin}$.}
The statistic $\rmin$ counts right-to-left minima. For
$e$, $\rmin(\epsilon) = 0$ (Remark~\ref{rem:base-case}).

For $\lambda(\pi)$: the inserted value $n{+}1$ is the
global maximum and cannot be a right-to-left minimum.
The cyclic shift permutes the values at the boxed
positions $b_0, \ldots, b_m$. Both before and after
the shift, these values are left-to-right maxima of
$\sigma$, each exceeding the suffix minimum at its
position (since each boxed position is not a
right-to-left minimum, there is a smaller value in
its suffix at a non-boxed position). Hence the shift does not create or destroy
any right-to-left minimum, giving
$\rmin(\lambda(\pi)) = \rmin(\pi)$.

For $\pi_1 \oplus \pi_2$ with $|\pi_1| = k$: values of
$\pi_1$ lie in $\{1, \ldots, k\}$ and values of the
shifted $\pi_2$ lie in $\{k{+}1, \ldots, k{+}m\}$. A
position $j \leq k$ is a right-to-left minimum of the
whole permutation if and only if it is one in $\pi_1$:
the $\pi_2$ values to its right are all
$> k \geq \pi_1(j)$, so they impose no new constraint,
and witnesses within $\pi_1$ are preserved. A position
$j > k$ is a right-to-left minimum of the whole if and
only if it
is one in $\pi_2$: all positions to its right also lie
in the $\pi_2$ block. Hence
$\rmin(\pi_1 \oplus \pi_2)
= \rmin(\pi_1) + \rmin(\pi_2)$.

\subsection{Binary trees}\label{sec:dict-binary}

\textbf{$\boldsymbol{\subtrees \to \comp}$.}
The statistic $\comp$ is the number of nodes on the
right spine from the root (that is,\ the number of
indecomposable summands). For $\varnothing$,
$\comp(\varnothing) = 0$. For
$\lambda(T) = (T, \varnothing)$: the right spine has
one node, so $\comp = 1$. For $B_1 \oplus B_2$: the
right spine of the combined tree consists of the right
spine of $B_1$ extended by that of $B_2$, so
$\comp(B_1 \oplus B_2)
= \comp(B_1) + \comp(B_2)$.

\textbf{$\boldsymbol{\rpath \to \rcomp}$.}
The statistic $\rcomp(B)$ is computed by iteratively
peeling right spines: go to the last node on the right
spine, replace $B$ by that node's left subtree, and
repeat; $\rcomp$ counts the number of iterations before
reaching $\varnothing$. For $\varnothing$,
$\rcomp(\varnothing) = 0$. For
$\lambda(T) = (T, \varnothing)$: the right spine
consists of the root alone, whose left subtree is $T$,
so one iteration reduces to $T$:
$\rcomp((T, \varnothing)) = 1 + \rcomp(T)$.
For $B_1 \oplus B_2$: the last node on the combined
right spine is the last node of $B_2$'s right spine,
with the same left subtree, so
$\rcomp(B_1 \oplus B_2) = \rcomp(B_2)$.

\textbf{$\boldsymbol{\leaves \to \redge}$.}
We define
$\redge(B) = 1 + (\text{number of right edges of } B)$,
where a right edge is an edge from a node to a nonempty
right child. Equivalently, $\redge$ satisfies the
three-case recursion
\begin{align*}
  \redge(\varnothing) &= 1 \\
  \redge((L, \varnothing)) &= \redge(L) \\
  \redge((L, R)) &= \redge(L) + \redge(R)
    \quad\text{for } R \neq \varnothing
\end{align*}
matching the canonical recursion for $\leaves$.
Indeed: for $\lambda(T) = (T, \varnothing)$, the root has
no right child, so $\lambda$ introduces no new right
edge: $\redge(\lambda(T)) = \redge(T)$. For
$B_1 \oplus B_2$: $\oplus$ creates exactly one new right
edge (from the last node of $B_1$'s right spine to the
root of $B_2$), preserving the internal edges. Writing
$r_i$ for the number of right edges of $B_i$:
$\redge(B_1 \oplus B_2)
= 1 + r_1 + 1 + r_2
= \redge(B_1) + \redge(B_2)$.

\subsection{$2 \times n$ SYT}\label{sec:dict-syt}

\textbf{$\boldsymbol{\subtrees \to \comp}$.}
The statistic $\comp$ counts components:
positions $k$ where the first $k$ columns contain exactly
$\{1, \ldots, 2k\}$. For $e$, $\comp(e) = 0$. For
$\lambda(T)$: since $1$ is in position $(1,1)$ and
$2n{+}2$ is in position $(2,n{+}1)$, no proper prefix
of columns contains $\{1, \ldots, 2j\}$ (the entry $1$
prevents early closure and $2n{+}2$ prevents premature
completion), so $\comp(\lambda(T)) = 1$.
For $T_1 \oplus T_2$: the blocks are determined
independently in the two halves, so
$\comp(T_1 \oplus T_2)
= \comp(T_1) + \comp(T_2)$.

\textbf{$\boldsymbol{\rpath \to \rcomp}$.}
The statistic $\rcomp$ is the length of the maximal
consecutive segment at the right end of the bottom row:
the largest $k$ such that the last $k$ entries of the
bottom row, read right to left, are
$2n, 2n{-}1, \ldots, 2n{-}k{+}1$. For $e$,
$\rcomp(e) = 0$. For $\lambda(T)$: the bottom row ends
with $2n{+}2$ (newly appended). The previous bottom
entry is the last entry of the shifted inner bottom row,
which is $2n' + 1$ (where $n' = |T|$) if it ended the
inner consecutive segment. Since $\lambda$ places
$2n{+}2$ at the end and shifts the inner entries by $1$,
the consecutive segment extends by one:
$\rcomp(\lambda(T)) = 1 + \rcomp(T)$.
For $T_1 \oplus T_2$: the right end of the bottom row is
entirely determined by $T_2$ (shifted), so
$\rcomp(T_1 \oplus T_2) = \rcomp(T_2)$.

\textbf{$\boldsymbol{\leaves \to \noncons}$.}
The statistic $\noncons$ counts entries $i$ in the top
row such that $i{+}1$ is not in the top row. For $e$,
$\noncons(e) = 0$ (Remark~\ref{rem:base-case}).

For $\lambda(T)$: the top row of $\lambda(T)$ is
$\{1\} \cup \{t_j + 1 : t_j \in \text{top}(T)\}$.
When $T = e$, $\lambda(e) = ([1], [2])$, so entry $1$
has $2$ in the bottom row and
$\noncons = 1 = \leaves(e)$. When $T \neq e$, the top
row of $T$ contains $1$, so the top row of $\lambda(T)$
contains both $1$ and $2 = 1 + 1$; thus entry $1$ does
not contribute to $\noncons$. The shifted entries
preserve the noncons count:
$\noncons(\lambda(T)) = \noncons(T)$.

For $T_1 \oplus T_2$: entries from $T_1$ and $T_2$
occupy disjoint value ranges ($\{1, \ldots, 2n_1\}$ and
$\{2n_1{+}1, \ldots, 2n_1{+}2n_2\}$). The largest top
entry of $T_1$ is at most $2n_1 - 1$ (since $2n_1$ is
always in the bottom row), so for every top entry $i$ of
$T_1$, the successor $i + 1 \leq 2n_1$ lies within
$T_1$'s range. Likewise, for every top entry $i$ of the
shifted $T_2$, $i + 1$ lies within $T_2$'s range. Thus
the noncons contributions from $T_1$ and $T_2$ are
independent:
$\noncons(T_1 \oplus T_2)
= \noncons(T_1) + \noncons(T_2)$.

\subsection{Non-crossing partitions}\label{sec:dict-ncp}

\textbf{$\boldsymbol{\subtrees \to \comp}$.}
The statistic $\comp$ counts components. For
$e$, $\comp(e) = 0$. For $\lambda(P)$: since $1$ and
$n{+}1$ belong to the same block, $\lambda(P)$ is
indecomposable, so $\comp(\lambda(P)) = 1$.
For $P_1 \oplus P_2$: the blocks of $P_1$ occupy
$\{1, \ldots, n_1\}$ and those of $P_2$ (shifted)
occupy $\{n_1{+}1, \ldots, n_1{+}n_2\}$, so the
components of the two partitions
contribute independently:
$\comp(P_1 \oplus P_2)
= \comp(P_1) + \comp(P_2)$.

\textbf{$\boldsymbol{\rpath \to \rcomp}$.}
The statistic $\rcomp$ counts right-to-left maxima in
the flattened form of $P$, where the flattened form
lists the elements of each block in decreasing order,
with blocks sorted by their minimal element. For $e$,
$\rcomp(e) = 0$.

For $\lambda(P)$: the operation adjoins $n{+}1$ to the
block of $1$, making $n{+}1$ the first element of that
block in the flattened form (since it is the new maximum
of the block). The remaining elements appear in the same
relative order. Since $n{+}1$ exceeds all other elements,
it is always a right-to-left maximum in the flattened
sequence, adding exactly one to the count. Thus
$\rcomp(\lambda(P)) = 1 + \rcomp(P)$.

For $P_1 \oplus P_2$: every element of the shifted $P_2$
exceeds every element of $P_1$, so no element of $P_1$
can be a right-to-left maximum in the combined flattened
form. The right-to-left maxima are exactly those within
$P_2$'s portion:
$\rcomp(P_1 \oplus P_2) = \rcomp(P_2)$.

\textbf{$\boldsymbol{\leaves \to \blocks}$.}
The statistic $\blocks$ counts blocks. For $e$,
$\blocks(e) = 0$ (Remark~\ref{rem:base-case}).

For $\lambda(P)$: when $P = e$,
$\lambda(e) = \{\{1\}\}$ has one block and
$\leaves(e) = 1$. When $P \neq e$, $\lambda$ merges
$n{+}1$ into the existing block of $1$ without creating
or destroying any block, so
$\blocks(\lambda(P)) = \blocks(P)$.

For $P_1 \oplus P_2$: the blocks are the disjoint union
of $P_1$'s blocks and $P_2$'s (shifted) blocks, so
$\blocks(P_1 \oplus P_2)
= \blocks(P_1) + \blocks(P_2)$.

\end{document}